\documentclass[10pt]{amsart}
\usepackage{amsmath,amscd}
\usepackage{graphicx}
\usepackage[all]{xy}
\usepackage{amsthm}
\usepackage{amssymb,url}
\usepackage[charter]{mathdesign}
\usepackage{amsfonts}
\usepackage{mathrsfs}
\usepackage{amsxtra}
\usepackage{bbm}
\usepackage{ae}
\usepackage[all]{xy}
\usepackage{color}
\usepackage{subfigure}
\usepackage{url}

\newtheorem{thm}{Theorem}
\newtheorem{thm*}{Theorem}
\newtheorem{lem}[thm]{Lemma}
\newtheorem{cor}[thm]{Corollary}
\newtheorem{conj}[thm]{Conjecture}
\newtheorem{prop}[thm]{Proposition}

\theoremstyle{remark}
\newtheorem{remark}[thm]{Remark}
\newtheorem{example}[thm]{Example}

\theoremstyle{definition}
\newtheorem{deef}[thm]{Definition}

\newcommand{\R}{\mathbbm{R}}
\newcommand{\C}{\mathbbm{C}}

\newcommand{\Z}{\mathbbm{Z}}

\newcommand{\N}{\mathbbm{N}}

\newcommand{\rd}{\mathrm{d}}

\newcommand{\grad}{\mathrm{g} \mathrm{r} \mathrm{a} \mathrm{d} \,}

\renewcommand{\epsilon}{\varepsilon}

\newcommand{\im}{\mathrm{i} \mathrm{m} \,}

\newcommand{\e}{\mathrm{e}}

\setlength\topmargin{-6mm}
\setlength\textheight{9.2in}

\addtolength{\evensidemargin}{-1cm}
\addtolength{\oddsidemargin}{-1cm}
\addtolength{\textwidth}{2cm}

\title[Magnitude function of domains in Euclidean space]{On the magnitude function of domains
in Euclidean space}
\author{Heiko Gimperlein, Magnus Goffeng}
\address{Heiko Gimperlein,\newline
\indent Maxwell Institute for Mathematical Sciences and \newline
\indent Department of Mathematics, Heriot-Watt University\newline
\indent Edinburgh EH14 4AS\newline
\indent United Kingdom\newline
\newline
\indent {\it and}\newline
\newline 
\indent Institute for Mathematics, University of Paderborn\newline
\indent Warburger Str.~100\newline
\indent 33098 Paderborn\newline
\indent Germany\newline
\newline
\indent Magnus Goffeng,\newline
\indent Department of Mathematical Sciences\newline 
\indent Chalmers University of Technology and \newline 
\indent University of Gothenburg\newline 
\indent SE-412 96 Gothenburg\newline 
\indent Sweden\newline}
\subjclass[2010]{}
\keywords{51F99 (primary), 28A75, 58J40, 58J50 (secondary)}
\email{h.gimperlein@hw.ac.uk, goffeng@chalmers.se}

\begin{document}
\maketitle

\begin{abstract}
We study Leinster's notion of magnitude for a compact metric space. For a smooth, compact domain $X\subset \mathbb{R}^{2m-1}$, we find geometric significance in the  function $\mathcal{M}_X(R) = \mathrm{mag}(R\cdot X)$. The function $\mathcal{M}_X$ extends from the positive half-line to a meromorphic function in the complex plane. Its poles are generalized scattering resonances. In the semiclassical limit $R \to \infty$, $\mathcal{M}_X$ admits an asymptotic expansion. The three leading terms of $\mathcal{M}_X$ at $R=+\infty$ are proportional to the volume, surface area and integral of the mean curvature. In particular, for convex $X$ the leading terms are proportional to the intrinsic volumes, and we obtain an asymptotic variant of the convex magnitude conjecture by Leinster and Willerton, with corrected coefficients. 
\end{abstract}

\large
\section{Introduction}
\normalsize

The notion of \emph{magnitude} of an enriched category, and specifically for a compact metric space, was introduced by Leinster \cite{leinster} to capture the ``essential size'' of an object. The magnitude has been shown to generalize the cardinality of a set and the Euler characteristic, and it is even closely related to measures of the diversity of a biological system. See \cite{leinmeck} for an overview.

Let $(X,d)$ be a finite metric space. We say that $w : X \to \R$ is a weight function provided that $\sum_{y \in X} \mathrm{e}^{-\rd(x,y)}w(y) =1$ for all $x \in X$. Given a weight function $w$, the magnitude of $X$ is defined as $\mathrm{mag}(X) := \sum_{x \in X} w(x)$. It is easy to check that $\mathrm{mag}(X)$ is independent of the choice of $w$, and a weight function exists when $(X,\rd)$ is positive definite (i.e.~ the matrix $(\mathrm{e}^{-\rd(x,y)})_{x,y\in F}$ is positive definite for any finite $F\subseteq X$). The magnitude of a compact, positive definite metric space $(X,\rd)$ is defined as 
$$\mathrm{mag}(X) := \sup\{\mathrm{mag}(\Xi) : \Xi \subset X\ \ \text{finite}\}\ .$$

Few explicit computations of magnitude are known. We provide a framework for a refined analysis and explicit computations when $X \subset \mathbb{R}^n$ is smooth, compact and $n=2m-1$ is odd. The framework builds on classical techniques from spectral geometry and semiclassical analysis and applies them to a mixed order system associated with a boundary value problem of order $n+1$. As an application, we find a geometric origin of the magnitude and prove an asymptotic variant of the Leinster-Willerton conjecture.\\

In the case of compact sets $X \subset \mathbb{R}^n$, Meckes \cite{meckes} gives an interpretation of the magnitude in potential theoretic terms, as a generalized capacity:
\begin{equation}\label{magsobolev}
\mathrm{mag}(X) = \frac{1}{n! \omega_n} \inf \left\{\|(1-\Delta)^{(n+1)/4} f\|_{L^2(\mathbb{R}^n)}^2 : f \in H^{(n+1)/2}(\mathbb{R}^n),\ f =1 \ \text{on}\ X\right\}\ .
\end{equation}
Here $\omega_n$ denotes the volume of the unit ball in $\R^n$ and $H^{(n+1)/2}(\mathbb{R}^n):=(1-\Delta)^{-(n+1)/4}L^2(\R^n)$ the Sobolev space of exponent $(n+1)/2$.

Instead of the magnitude of an individual set $X$, it proves fruitful to study the function $\mathcal{M}_X(R) := \mathrm{mag}(R\cdot X)$ for $R>0$. Here, for $X\subset \mathbb{R}^n$ we use the notation $R\cdot X = \{Rx:x\in X\}\subseteq \R^n$. Motivated by properties of the Euler characteristic as well as heuristics and computer calculations, Leinster and Willerton \cite{leinwill} conjectured a relation to the intrinsic volumes $V_i(X)$:

\begin{conj}
\label{lwconj}
Suppose $X\subset \mathbb{R}^n$ is compact and convex. Then 
$$\mathcal{M}_X(R) = \frac{1}{n! \omega_n} \mathrm{vol}_n(X)\ R^n +  \frac{1}{2(n-1)! \omega_{n-1}} \mathrm{vol}_{n-1}(\partial X)\ R^{n-1} + \cdots + 1 = \sum_{i=0}^n\frac{1}{i! \omega_i} V_i(X)\ R^i \ .$$
\end{conj}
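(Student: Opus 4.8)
The natural approach to Conjecture~\ref{lwconj} is to show that, for each fixed $R>0$, the functional $\phi_R\colon X\mapsto \mathcal{M}_X(R)$ is a continuous, rigid-motion invariant valuation on the family $\mathcal{K}^n$ of compact convex bodies in $\R^n$, and then to invoke Hadwiger's characterization theorem, by which every such valuation is a linear combination $\sum_{i=0}^n c_i V_i$ of the intrinsic volumes. Rigid-motion invariance of $\phi_R$ is immediate from Meckes's formula \eqref{magsobolev}. Monotonicity, $X\subseteq Y\Rightarrow \mathcal{M}_X(R)\le\mathcal{M}_Y(R)$, follows because the admissible set $\{f\in H^{(n+1)/2}(\R^n): f=1\text{ on }R\cdot X\}$ in \eqref{magsobolev} only shrinks as $X$ grows, so the infimum can only increase. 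Continuity in the Hausdorff metric should follow from stability of the constrained minimization in \eqref{magsobolev}: near-optimal trial functions for $R\cdot X$ can be transported to a slightly perturbed convex body at a controlled energy cost, with convexity supplying the extension and cutoff operators needed. The decisive ingredient is the inclusion--exclusion identity
\[
\phi_R(X\cup Y)+\phi_R(X\cap Y)=\phi_R(X)+\phi_R(Y),\qquad X,Y,X\cup Y\in\mathcal{K}^n .
\]

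Granting this, Hadwiger's theorem gives $\mathcal{M}_X(R)=\sum_{i=0}^n c_i(R)\,V_i(X)$ with each $c_i$ a function of $R$ and $n$ alone. The scaling relation $\mathcal{M}_X(RS)=\mathcal{M}_{S\cdot X}(R)$ and the homogeneity $V_i(S\cdot X)=S^i V_i(X)$ then force $c_i(RS)=S^i c_i(R)$ for all $R,S>0$, whence $c_i(R)=c_i(1)R^i$; in particular $\mathcal{M}_X$ is automatically a polynomial of degree $n$ in $R$, as the conjecture predicts. It remains to identify the constants. Since $V_0\equiv 1$ and $R\cdot X$ collapses to a point as $R\to 0^+$, continuity of magnitude gives $c_0(1)=\mathrm{mag}(\{x\})=1$. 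The top constants $c_n(1)=1/(n!\,\omega_n)$ and $c_{n-1}(1)=1/((n-1)!\,\omega_{n-1})$ are read off from the two leading terms of the large-$R$ asymptotics of $\mathcal{M}_X$. For the remaining $c_i(1)$ with $1\le i\le n-2$ one would either carry the semiclassical expansion of the boundary value problem for $(1-\Delta)^{(n+1)/2}$ (a differential operator of order $n+1$ when $n=2m-1$) to all orders and match the resulting integrals of symmetric functions of the principal curvatures of $\partial X$ against the $V_i$ through the Steiner tube formula, or attempt the most direct route and compute $\mathcal{M}_{B^n}(R)$ for the Euclidean unit ball by separating variables in the radial, modified-Bessel-type transmission problem and equating coefficients of $R^i$.

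The main obstacle is the valuation identity for $\phi_R$. Magnitude is governed by a global, quadratic variational problem with a hard pointwise constraint, and there is no structural reason for such a functional to be additive under unions and intersections: the natural attempt --- combining the minimizers of \eqref{magsobolev} for $R\cdot(X\cup Y)$ and $R\cdot(X\cap Y)$ into competitors for $R\cdot X$ and $R\cdot Y$, using positive-definiteness of $(1-\Delta)^{(n+1)/2}$ and uniqueness of minimizers to balance the energies --- fails because the elliptic problem does not localize cleanly to the pieces $X$, $Y$, $X\cap Y$. In fact the explicit ball computation just described reveals that $\mathcal{M}_{B^n}(R)$ is, in general, not a polynomial but a ratio of polynomials in $R$, so $\phi_R$ is not a valuation and Conjecture~\ref{lwconj}, in its exact polynomial form, cannot hold in general. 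The realistic outcome of this program is therefore the weaker \emph{asymptotic} version: $\mathcal{M}_X(R)$ admits an expansion at $R=+\infty$ whose three leading coefficients are proportional to $\mathrm{vol}_n(X)$, $\mathrm{vol}_{n-1}(\partial X)$ and $\int_{\partial X}H\,\rd\sigma$ --- that is, to $V_n$, $V_{n-1}$ and $V_{n-2}$ for convex $X$ --- but with the coefficient of $R^{n-2}$ corrected away from $1/((n-2)!\,\omega_{n-2})$.
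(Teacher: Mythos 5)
You correctly conclude that Conjecture \ref{lwconj} is false in its exact polynomial form and that only an asymptotic variant at $R=+\infty$ can survive --- and this is indeed the paper's position. The conjecture was disproven by Barcel\'o and Carbery through the explicit computation of $\mathcal{M}_{B_5}$, which is rational with a pole at $R=-3$ rather than polynomial; the paper cites this and proves the asymptotic variant as Theorem \ref{mainthm}. Your Hadwiger heuristic is a sensible way to see what the conjecture would require, and locating the obstruction in the failure of the inclusion--exclusion identity is the right conceptual diagnosis. However, one factual claim is off: you assert that the coefficients of $R^n$ and $R^{n-1}$ match the conjectured values with only the $R^{n-2}$ term needing correction, whereas in fact already the $R^{n-1}$ coefficient deviates for odd $n\geq 5$. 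Theorem \ref{mainthm}d) gives $c_1(X)=m\,\mathrm{vol}_{n-1}(\partial X)$ inside an overall $1/(n!\,\omega_n)$, hence the coefficient of $\mathrm{vol}_{n-1}(\partial X)\,R^{n-1}$ is $\frac{n+1}{2\,n!\,\omega_n}$, while the conjecture predicts $\frac{1}{2(n-1)!\,\omega_{n-1}}$; these agree precisely when $\omega_n/\omega_{n-1}=(n+1)/n$, which holds for $n=1,3$ but fails from $n=5$ onward. Willerton's expansion \eqref{willertsasym} for $B_n$ exhibits the discrepancy explicitly. Only the leading volume term survives with its conjectured prefactor; the rest come with dimension-dependent corrections, which is what the paper means by ``corrected coefficients.''

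As for method, the paper's route to the asymptotic result is entirely different from the one you sketch. Rather than appealing to valuation theory, it reformulates magnitude via the Barcel\'o--Carbery boundary value problem of order $n+1$ in the exterior domain, constructs the Calder\'on projector and the Dirichlet-to-Neumann operator $\Lambda(R)$ as parameter-dependent pseudodifferential systems of mixed (Agmon--Douglis--Nirenberg) order on $\partial X$, and reads off all coefficients $c_j$ from the symbol expansion of $\Lambda(R)$ in the semiclassical limit. This buys much more than an asymptotic expansion: it gives the meromorphic continuation of $\mathcal{M}_X$ to $\C$, identifies its poles among generalized scattering resonances, and yields the invariant-theoretic structure of the higher coefficients together with an asymptotic inclusion--exclusion principle for non-convex smooth domains. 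None of these results is visible from the Hadwiger-type approach, which --- as you yourself note --- breaks down at the valuation identity.
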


In particular, this formula in terms of the intrinsic volumes $V_i(X)$ would imply continuity properties and an inclusion-exclusion principle for the magnitude. We refer to \cite{barcarbs, leinwill} for further motivation. 

Explicitly computing the infimum in formula \eqref{magsobolev} for the unit ball $B_5$ in dimension $5$ using separation of variables, Barcel\'o and Carbery \cite{barcarbs} disproved Conjecture \ref{lwconj}: In this case they found that $\mathcal{M}_{B_5}$ is a rational function with one pole located in $R=-3$. \\

In spite of this negative result, we here describe the geometric content in the magnitude function $\mathcal{M}_X$. The main application of our approach is an asymptotic variant of  Conjecture \ref{lwconj}, also for non-convex domains. \\

To state the main result, recall that a domain $X\subseteq \R^n$ is a subset which coincides with the closure of its interior points. A domain is called smooth if its boundary $\partial X$ is a smooth hypersurface in $\R^n$. A sector $\Gamma\subseteq \C$ is an open subset closed under multiplication by $t\geq 1$. For a function $f:\Gamma\to \C$ and numbers $c_j\in \C$, $j=0,1,\ldots$, we write $f(R)\ \sim \ \sum_{j=0}^\infty c_j R^{n-j}$ as $\mathrm{Re}(R)\to +\infty$ if for any $M\in \N$, there is an $N\in \N$ such that 
$$f(R)-\sum_{j=0}^N c_jR^{n-j} =\mathcal{O}(|R|^{-M}) \quad\mbox{as $\mathrm{Re}(R)\to +\infty$ in $\Gamma$}.$$
{For $\alpha\in [0,\pi)$ we use the notation $\Gamma_\alpha := \{ R \in \C\setminus \{0\}: |\mathrm{arg}(R)|<\alpha\}$.}

\begin{thm}
\label{mainthm}
Let $X\subset \R^{n}$ be a smooth, compact domain, where $n=2m-1$ is odd. 
\begin{enumerate}
\item[a)] $\mathcal{M}_X$ admits a meromorphic continuation to $\mathbb{C}$.
{ \item[b)] For any $\alpha\in [0,\pi/2)$, there is a constant $C_{\alpha,X}\geq 0$, such that $\mathcal{M}_X$ is holomorphic in $\Gamma_{\frac{\pi}{n+1}}\cup \{R\in \Gamma_{\alpha}: \mathrm{Re}(R)\geq C_{\alpha,X}\}$. The poles of $\mathcal{M}_X$ are contained in a set of generalized scattering resonances $\mathrm{P}_{\Lambda}(X)\setminus\{0\}$  (see Definition \ref{scattpo}). }
\item[c)] Let $\Gamma$ be a sector in $\C_+ := \{R \in \C : \mathrm{Re}(R)>0\}$. There exists an asymptotic expansion
\begin{equation}
\label{mxas}
\mathcal{M}_X(R)\ \sim \ \frac{1}{n!\omega_n}\sum_{j=0}^\infty c_j R^{n-j} \quad \mbox{as $\mathrm{Re}(R)\to +\infty$ in $\Gamma$}
\end{equation}
with coefficients $c_j=c_j(X)$, $j=0,1,2,\ldots$
\item[d)] The first three coefficients are given by
$$ c_0(X)=\textnormal{vol}_n(X),\  c_1(X)=m\textnormal{vol}_{n-1}(\partial X),\  c_2(X)=\frac{m^2}{2}\ (n-1)\int_{\partial X} H\, \rd S\ .$$
Here $H$ is the mean curvature of $\partial X$. If $X$ is convex, $c_j$ is proportional to the intrinsic volume $V_{n-j}(X)$, $j=0,1,2$.
\item[e)] For $j\geq 1$, the coefficient $c_{j}$ is determined by the second fundamental form $L$ and covariant derivative $\nabla_{\partial X}$ of $\partial X$: $c_j$ is an integral over $\partial X$ of a universal polynomial in the entries of $\nabla_{\partial X}^k L$, $0 \leq k\leq j-2$. The total number of covariant derivatives appearing in each term of the polynomial is $j-2$.
\end{enumerate}
\end{thm}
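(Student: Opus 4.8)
The strategy is to convert the variational formula \eqref{magsobolev} into a boundary value problem and then apply the machinery of semiclassical analysis and pseudodifferential operators. I'll work through the key steps:

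First, I would rescale. In \eqref{magsobolev}, write $\mathrm{mag}(R\cdot X)$ as an infimum over $f$ equal to $1$ on $R\cdot X$; substituting $f(x) = g(x/R)$ and setting $h = 1/R$, the operator $(1-\Delta)^{(n+1)/4}$ becomes $(1 - h^2\Delta)^{(n+1)/4}$ up to a power of $R$, so that $\mathcal{M}_X(R)$ is (up to the constant $\frac{1}{n!\omega_n}$ and a factor $R^n$) the energy of the minimizer of a semiclassical Bessel-potential energy with boundary value $1$ on $X$. The Euler-Lagrange equation is $(1-h^2\Delta)^{(n+1)/2} u = 0$ on $\mathbb{R}^n\setminus X$, with $u = 1$ on $X$ and matching conditions across $\partial X$; because $n+1 = 2m$ is even this is a genuine elliptic PDE of order $2m$ on the exterior domain (and trivially on the interior), not merely a pseudodifferential equation. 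This is the key structural point that makes the odd-dimensional case tractable.

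\smallskip

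Second, I would reduce to the boundary. Solving the interior/exterior problems and imposing the $C^{2m-1}$ matching across $\partial X$ expresses $u$ through its Cauchy data on $\partial X$; eliminating the bulk produces a system of $m$ equations for $m$ unknown normal derivatives on the $(n-1)$-dimensional manifold $\partial X$, governed by a matrix of semiclassical pseudodifferential operators — essentially a mixed-order Dirichlet-to-Neumann / Calder\'on projector associated with $(1-h^2\Delta)^{m}$. The magnitude is then read off as a boundary integral of the solution of this system. This is the ``mixed order system associated with a boundary value problem of order $n+1$'' mentioned in the introduction.

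\smallskip

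Third, for parts (c), (d), (e) I would construct a semiclassical parametrix for this boundary system. Using the symbolic calculus of semiclassical $\Psi$DOs on $\partial X$, invert the principal symbol, iterate to get a full symbol expansion in powers of $h$, and integrate along $\partial X$. Each term in the parametrix is built from the symbols of $(1-h^2\Delta)^m$ localized at $\partial X$, whose dependence on the geometry enters only through the second fundamental form $L$ and its covariant derivatives (this is the standard structure of boundary symbols in Riemannian geometry — e.g. as in heat kernel / Weyl law computations). Tracking the homogeneity degrees gives the claimed ``$j-2$ derivatives in the $j$-th term'' bookkeeping in (e), and a direct computation of the first three symbols yields $c_0, c_1, c_2$ in (d); the leading term $c_0 = \mathrm{vol}_n(X)$ should come out of the flat (half-space) model, $c_1$ from the first-order correction involving $\mathrm{tr}\, L = (n-1)H$, and $c_2$ from the second-order boundary term. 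For the convex case one checks these three expressions against the intrinsic volumes $V_n, V_{n-1}, V_{n-2}$.

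\smallskip

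Fourth, for parts (a) and (b) — the meromorphic continuation and location of poles — I would reinterpret the boundary system with $h = 1/R$ complex. The operator $(1-h^2\Delta)^m$ continues holomorphically in $h$, and its inverse on the exterior domain is meromorphic with poles exactly where the homogeneous problem has a nontrivial solution, i.e. at generalized scattering resonances; the sector $\Gamma_{\pi/(n+1)}$ is where $(1-h^2\Delta)^m$ stays elliptic/invertible so no poles occur, and for wider sectors one uses the asymptotic expansion (which shows $\mathcal{M}_X - $ (polynomial part) is small) to rule out poles with large real part, giving the constant $C_{\alpha,X}$. Fredholm analytic continuation then yields meromorphy on all of $\mathbb{C}$.

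\smallskip

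\textbf{Main obstacle.} The hardest part is the second step: rigorously reducing the exterior $2m$-th order problem to a well-posed mixed-order $\Psi$DO system on $\partial X$, controlling the precise mapping properties (Sobolev-space continuity, ellipticity, parameter-ellipticity in $h$ uniformly down to $\partial X$) so that the parametrix construction is justified and the error terms are genuinely $\mathcal{O}(h^\infty)$ in the relevant sectors. Getting the boundary symbol calculus set up so that the geometric dependence is manifestly through $L$ and $\nabla_{\partial X} L$ — and extracting clean closed forms for $c_0, c_1, c_2$ rather than an unilluminating mess — is where most of the technical work lies.
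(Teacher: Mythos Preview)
Your plan is essentially the paper's approach: reduce the Barcel\'o--Carbery boundary value problem $(R^2-\Delta)^m u=0$ on $\R^n\setminus X$ to a mixed-order system of parameter-dependent pseudodifferential operators on $\partial X$ via layer potentials and a Calder\'on projector, read off the asymptotic expansion from the symbol calculus of the resulting Dirichlet-to-Neumann operator $\Lambda(R)=\mathbb{A}_{+-}(R)^{-1}(1-\mathbb{A}_{++}(R))$, and obtain meromorphic continuation from the analytic Fredholm theorem applied to the holomorphic family $R\,\mathbb{A}(R)$ (built from the explicit fundamental solution $\kappa_n R^{-1}e^{-R|z|}$). One simplification the paper exploits that you do not mention: for part (d) it only proves that $c_1$ and $c_2$ are \emph{proportional} to $\mathrm{vol}_{n-1}(\partial X)$ and $\int_{\partial X} H\,\rd S$ with universal dimension-dependent prefactors, and then fixes those prefactors by comparing with Willerton's explicit expansion for the unit ball---this sidesteps the direct symbolic computation of the numerical constants that you flag as part of the ``main obstacle.''
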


\begin{remark}
Part e) is deduced from a statement in local coordinates using invariance theory, as in Atiyah, Bott and Patodi's work \cite{abp}, and Gilkey \cite{gilkey}. It implies, in particular, that for smooth, compact domains $A$, $B$ and $A \cap B \subset \R^n$, the inclusion-exclusion principle $\mathcal{M}_{A \cup B}(R) \sim \mathcal{M}_A(R) + \mathcal{M}_B(R) - \mathcal{M}_{A \cap B}(R)$ holds.
\end{remark}

\begin{remark}
The pseudo-differential techniques in this article still apply when $\partial X$ is not smooth, but only of class $C^r$ for a sufficiently large $r$. The relevant non-smooth calculus of pseudodifferential operators was introduced by Kumano-go and Nagase and studied, in particular, by Marschall in \cite{marschall}; recent work shows its invariance under changes of variables and thereby allows applications to operators on nonsmooth manifolds \cite{abels}.  As a consequence, $\mathcal{M}_X$ still admits a meromorphic continuation to $\C$, and  $\mathcal{M}_X(R)=\frac{1}{n!\omega_n}\sum_{j=0}^N c_j R^{n-j} + o(R^{n-N})$ as $\mathrm{Re}(R) \to +\infty$ in any sector in $\C_+$, with $N = N(r)$. As before, one obtains an asymptotic inclusion-exclusion principle $\mathcal{M}_{A \cup B}(R) = \mathcal{M}_A(R) + \mathcal{M}_B(R) - \mathcal{M}_{A \cap B}(R)+ o(R^{n-N})$ provided $A$, $B$ and $A\cap B  \subset \R^n$ are compact $C^r$ domains.
\end{remark}

\begin{remark}
The leading term $c_0$ in the asymptotics \eqref{mxas} was known from \cite{barcarbs}, and some formulas for $\mathcal{M}_X$ have been obtained for balls \cite{will}. More precisely, Willerton \cite{will} obtains formulas for the magnitude of the $(2m-1)$-ball which imply the asymptotics 
\begin{equation}
\label{willertsasym}
n! \mathcal{M}_{B_n}(R) = R^n + \frac{n (n+1)}{2} R^{n-1} + \frac{n m^2 (n-1)}{2} R^{n-2} + \mathcal{O}(R^{n-3}).
\end{equation} 
The asymptotics \eqref{willertsasym} proves part d) of Theorem \ref{mainthm} for the unit ball. We use this fact to simplify a combinatorial argument in our proof for general $X$ in Subsection \ref{computingcoeffsubs}. { In Example \ref{exampleShell} we apply the approach of \cite{barcarbs, will} to the spherical shell $X=(2B_3)\setminus B_3^\circ$. The magnitude function $\mathcal{M}_X$ is not rational and contains an infinite sequence of poles with unbounded real parts, accumulating on the curve $\mathrm{Re}(R)= \log(|\mathrm{Im}(R)|)$}.
\end{remark}

\begin{remark}
The tools used in the proof of Theorem \ref{mainthm} are from global analysis, and are often seen in the related problems of computing the semiclassical behavior of resolvents, asymptotics of heat traces and spectral asymptotics. Historically, the geometric content of heat trace asymptotics was crucial for Patodi's approach to the Atiyah-Singer index theorem, see for instance \cite{abp}.
\end{remark}

\begin{remark}
Since the submission of this paper, we have computed the fourth term $c_3(X)$ to be proportional to $\int_{\partial X} H^2\rd S$. Details will be published elsewhere. This term is not proportional to an intrinsic volume, e.g. in dimension $n=3$ the expression $\int_{\partial X} H^2\rd S$ computes the Willmore energy of the $\partial X$ whereas $V_0(X)$ is proportional to the Euler characteristics of $X$. In particular, the Leinster-Willerton conjecture \cite{leinwill} can only hold up to an error term $O(R^{n-3})$ for smooth domains. Moreover, the fact that $\int_{\partial X} H^2\rd S$ can be infinite for boundaries with less regularity than $C^2$ opens up for potentially exotic asymptotic behaviour of the magnitude function of low regularity domains.
\end{remark}

\subsection*{Outlook} 
The framework in this article suggests an approach to natural problems:
\begin{enumerate}
\item[A)] Count the number of poles of $\mathcal{M}_X$.
\item[B)] In even dimension, prove meromorphic continuation of the magnitude function and an asymptotic Leinster-Willerton conjecture.
\item[C)]  Study the magnitude function $R \mapsto \mathrm{mag}(X,R \cdot g)$, when $X$ is a domain in a Riemannian manifold $(M,R\cdot g)$.  
\item[D)] Extend Theorem \ref{mainthm} to convex domains $X$ of low regularity.
\end{enumerate}

\noindent A non-technical presentation of the results in this article and open directions may be found in the blog posts \cite{blog1, blog2, blog3, blog4}.

\subsection*{Contents of the paper} 
The remainder of the paper is dedicated to proving Theorem \ref{mainthm}. The idea underlying its proof is to relate $\mathcal{M}_{X}(R)$ to the Dirichlet-to-Neumann operator of a boundary value problem first studied by Barcel\'o-Carbery \cite{barcarbs}. After a reformulation into a boundary value problem of order $n+1$ with parameter $R$, we adapt methods of semiclassical analysis and meromorphic Fredholm theory for explicit computations with a system of mixed higher-order operators.

The paper is organized as follows. We recall the Barcel\'o-Carbery boundary value problem in $\R^n \setminus X$ and its relation to the magnitude of $X$ in Subsection \ref{bcbvp}. To study the boundary value problem, we introduce boundary layer potentials in Subsection \ref{blaypot} -- these operators assemble into a solution operator by Proposition \ref{greensandal} and are used in Section \ref{reducttobound} to reduce the boundary value problem to the boundary $\partial X$.

The technical foundation for the proof of Theorem \ref{mainthm} is laid in Section \ref{reducttobound}. There, we show that the boundary layer potentials define a Calder\'on projector and a Dirichlet-to-Neumann operator. As these are matrix-valued pseudo-differential operators with parameter $R$, Theorem \ref{mainthm}c) on the existence of an asymptotic expansion of the magnitude function as $R\to +\infty$  immediately follows in Subsection \ref{asypexp}. The first three coefficients in part d) are computed in Subsection \ref{computingcoeffsubs} from the symbol of the Dirichlet-to-Neumann operator. By Equation \eqref{willertsasym} it suffices to show that $c_j(X)$ is  \emph{proportional} to the stated terms in part d) up to a prefactor which only depends on the dimension $n$. Part e) concerning the structure of the coefficients $c_j$, $j>2$, is proved in Subsection \ref{genstrusubs}. 

We finally prove part a) in Subsection \ref{meroextsubs} and discuss the role of scattering resonances mentioned in part b) in Subsection \ref{scattpolesubs}. The proofs rely on the explicit knowledge of the boundary layer potentials and their meromorphic dependence on $R\in \C$ in combination with the meromorphic Fredholm theorem.\\

\noindent \emph{Notation:} We write $f \lesssim g$ provided there exists a constant $C$ independent of $f$ and $g$ such that $f \leq Cg$. If $\mathcal{H}, \mathcal{H}_1, \mathcal{H}_2$ denote Hilbert spaces, the algebra of bounded operators from $\mathcal{H}_1$  to $\mathcal{H}_2$ is denoted by $\mathbb{B}(\mathcal{H}_1,\mathcal{H}_2)$, $\mathbb{B}(\mathcal{H}) = \mathbb{B}(\mathcal{H},\mathcal{H})$, and we write $\mathrm{id}_{\mathcal{H}}$ for the identity map on $\mathcal{H}$. We use $R$ to denote a parameter which is $>0$ or lies in a sector in $\mathbb{C}$, depending on context.

\large
\section{Magnitude and the Barcel\'o-Carbery boundary value problem}
\normalsize

In this section we study a boundary value problem introduced by Barcel\'o-Carbery in \cite{barcarbs}. Boundary layer potentials are used to reduce it to a problem on the boundary of $X$. Throughout this section we assume that $X\subseteq \R^n$ is a compact domain with $C^\infty$-boundary. We further assume that $n=2m-1$ is odd. We sometimes write $\Omega:=\R^n\setminus X$ and refer to $\Omega$ as \emph{the exterior domain}. Note that $\partial \Omega = \partial X$. We use the Sobolev spaces $H^s(\R^n):=(1-\Delta)^{-s/2}L^2(\R^n)$ of exponent $s\geq 0$. Here, the Laplacian $\Delta$ is given by $\Delta=\sum_{j=1}^n \frac{\partial^2}{\partial x_j^2}$. The spaces $H^s(X)$ and $H^s(\Omega)$ are defined using restrictions. The Sobolev spaces $H^s(\partial X)$ can be defined using local charts or as $(1-\Delta_{\partial X})^{-s/2}L^2(\partial X)$.

We use $\partial_\nu$ to denote the Neumann trace of a function $u$ in $\Omega$. By standard theory, $\partial_\nu$ extends to a continuous operator $H^s(\Omega)\to H^{s-3/2}(\partial X)$ for $s>3/2$. Similarly, we let $\gamma_0:H^s(\Omega)\to H^{s-1/2}(\partial X)$ denote the trace operator defined for $s>1/2$. 

\subsection{The Barcel\'o-Carbery boundary value problem}
\label{bcbvp}

In their work mentioned in the introduction, Barcel\'o-Carbery \cite{barcarbs} reduced the computation of the magnitude to a boundary value problem through the identity 
$$\textnormal{mag}(X)=\frac{\textnormal{vol}_n(X)}{n!\omega_n}+\frac{1}{n!\omega_n}\sum_{\frac{m}{2}<j\leq m} (-1)^j\begin{pmatrix} m\\ j\end{pmatrix} \int_{\partial X}\partial_\nu \Delta^{j-1} h\,\rd S .$$
Here $h\in H^m(\R^n)$ is the unique weak solution to 
\begin{equation}
\label{wekaformra}
(1-\Delta)^mh=0\quad\mbox{weakly in $\Omega=\R^n\setminus X$ and $h=1$ on $X$}.
\end{equation}
The integrals $\int_{\partial X}\partial_\nu \Delta^{j-1} h\,\rd S$ are defined from an exterior limiting procedure (see \cite[Section 5]{barcarbs}).

To better describe the asymptotic problem, we introduce the operators
$$\mathcal{D}^j_R:=\begin{cases}
\partial_\nu \circ(R^2-\Delta)^{(j-1)/2}, \; &\mbox{when $j$ is odd},\\
\gamma_0\circ(R^2-\Delta)^{j/2}, \; &\mbox{when $j$ is even}.
\end{cases}$$
By the trace theorem, the operators $\mathcal{D}^j_R$ are continuous as operators $\mathcal{D}^j_R:H^s(\Omega)\to H^{s-j-1/2}(\partial X)$ for $s>j+1/2$. Recall the notation $\Gamma_\alpha := \{ R \in \C\setminus \{0\}: |\mathrm{arg}(R)|<\alpha\}$.

\begin{prop}
\label{uniquesolu}
Let $R \in \Gamma_{\frac{\pi}{n+1}}$. Suppose that $u_j\in H^{2m-j-1/2}(\partial X)$, $j=0,\ldots, m-1$, are given. Then there is a unique weak solution $u\in H^{2m}(\Omega)$ to 
\begin{equation}
\label{thebvpgener}
\begin{cases}
(R^2-\Delta)^mu&=0\quad\mbox{in $\Omega$}\\
\mathcal{D}^j_R u&=u_j
\;\;, \; j=0,...,m-1.
\end{cases}
\end{equation}
\end{prop}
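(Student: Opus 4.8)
The plan is to recast \eqref{thebvpgener} as a boundary value problem for the operator $(R^2-\Delta)^m$ in $\Omega$ and apply the standard elliptic-with-parameter theory, after verifying that the Dirichlet-type system $\{\mathcal{D}^j_R\}_{j=0}^{m-1}$ is a well-posed set of boundary conditions. First I would record that $(R^2-\Delta)^m$ is, for $R\in\Gamma_{\frac{\pi}{n+1}}$, an elliptic operator of order $2m=n+1$ with parameter: its principal symbol $(|\xi|^2+R^2)^m$ is invertible for $(\xi,R)\neq(0,0)$ precisely when $R^2\notin(-\infty,0]$, and the condition $|\arg R|<\frac{\pi}{n+1}\le\frac{\pi}{2}$ guarantees $\mathrm{Re}(R^2)>0$, so no cancellation occurs (this is exactly the Agmon-type cone condition). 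Hence the only issues are (i) that the exterior domain $\Omega$ is non-compact, so one must control behavior at infinity, and (ii) that the boundary operators form an admissible system.

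For (i), the natural function space is $H^{2m}(\Omega)$ as stated, and since $\mathrm{Re}(R^2)>0$ the operator $(R^2-\Delta)^m:H^{2m}(\R^n)\to L^2(\R^n)$ is an isomorphism by Fourier analysis; a Seeley-type extension operator $H^s(\Omega)\to H^s(\R^n)$ then reduces exterior estimates to global ones on $\R^n$, so that the a priori estimate
\[
\|u\|_{H^{2m}(\Omega)}\ \lesssim\ \|(R^2-\Delta)^m u\|_{L^2(\Omega)}+\sum_{j=0}^{m-1}\|\mathcal{D}^j_R u\|_{H^{2m-j-1/2}(\partial X)}
\]
holds. For (ii), I would check the Lopatinski--Shapiro (complementing) condition for the pair $\big((R^2-\Delta)^m,\{\mathcal{D}^j_R\}\big)$: freezing coefficients at a boundary point, passing to the half-space model $\{x_n>0\}$ and taking the tangential Fourier transform, one must show that the only solution of $(|\xi'|^2+\tau_R^2-\partial_{x_n}^2)^m v=0$ on $x_n\in(0,\infty)$ that decays as $x_n\to\infty$ and satisfies all $m$ homogeneous conditions $\mathcal{D}^j_R v|_{x_n=0}=0$ is $v\equiv 0$, where $\tau_R^2=|\xi'|^2+R^2$ has positive real part. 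The decaying solution space is $m$-dimensional, spanned by $x_n^k e^{-\tau_R x_n}$, $k=0,\dots,m-1$; evaluating $\partial_\nu$ (odd $j$) and $\gamma_0$ (even $j$) against this basis gives a triangular-type $m\times m$ matrix whose determinant is a nonzero power of $\tau_R$ (it never vanishes because $\mathrm{Re}(\tau_R)>0$), so the system is elliptic and the kernel is trivial.

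Combining these, the operator
\[
\big((R^2-\Delta)^m,\mathcal{D}^0_R,\dots,\mathcal{D}^{m-1}_R\big):\ H^{2m}(\Omega)\ \longrightarrow\ L^2(\Omega)\times\prod_{j=0}^{m-1}H^{2m-j-1/2}(\partial X)
\]
is Fredholm, and the a priori estimate above shows it is injective; a parametrix construction (or the adjoint boundary problem, which by the same symbol computation is also injective) gives surjectivity onto the stated data space. Restricting to data $(0;u_0,\dots,u_{m-1})$ yields the unique $u\in H^{2m}(\Omega)$ solving \eqref{thebvpgener}. The main obstacle I expect is bookkeeping rather than conceptual: one must be careful that the parameter-dependent constants are uniform, or at least locally bounded, in $R$ over the cone $\Gamma_{\frac{\pi}{n+1}}$ — this is where the angular restriction is genuinely used, since it is exactly the set on which $(R^2-\Delta)^m$ remains elliptic with parameter and the model-problem determinant stays away from zero. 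An alternative, which the paper seems poised to take, is to bypass interior estimates entirely by representing $u$ through the boundary layer potentials of Subsection \ref{blaypot}, reducing \eqref{thebvpgener} to an invertible system on $\partial X$; I would set that up in parallel and use whichever is cleaner for the subsequent meromorphic continuation in $R$.
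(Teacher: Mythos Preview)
Your approach is valid but genuinely different from the paper's. The paper gives a short variational argument: pick $g\in H^m(X)$ with $\mathcal{D}^j_R g=u_j$, then take the unique minimizer $u_0\in H^m(\R^n)$ of $\|v\|_{H^m_R(\R^n)}^2:=\int_{\R^n}(R^2+|\xi|^2)^m|\hat v(\xi)|^2\,\mathrm{d}\xi$ subject to $v|_X=g$. The Euler--Lagrange equation gives $(R^2-\Delta)^m u_0=0$ weakly in $\Omega$, elliptic regularity upgrades $u:=u_0|_\Omega$ to $H^{2m}(\Omega)$, and uniqueness follows because any $H^{2m}(\Omega)$ solution glues (via the matching $\mathcal{D}^j_R$-traces, which are triangularly equivalent to $\partial_\nu^j$-traces) to an $H^m(\R^n)$ critical point of a strictly convex functional. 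For complex $R$ the same works by coercivity once $|\arg(R^{2m})|<\pi$, which is exactly the angular restriction $|\arg R|<\pi/(2m)=\pi/(n+1)$. This sidesteps all exterior-domain Fredholm issues: the minimization lives globally on $\R^n$.

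Your Lopatinski--Shapiro route would also succeed, and it has the merit of fitting directly into the parameter-dependent pseudodifferential framework used later in the paper. Two small corrections: the claim $\mathrm{Re}(R^2)>0$ fails for $n=1$ (where $\Gamma_{\pi/2}$ allows $|\arg R|$ close to $\pi/2$); what you actually need and have is $R^2\notin(-\infty,0]$, which suffices for $\tau_R$ to have positive real part. More substantively, invoking Fredholmness on the unbounded $\Omega$ is the real cost of your approach---the Agmon--Douglis--Nirenberg package is stated for bounded domains, and your Seeley-extension remark gestures at the fix without carrying it out. It can be closed (e.g.\ by exploiting the global invertibility of $(R^2-\Delta)^m$ on $H^{2m}(\R^n)$ to localize near $\partial X$), but the variational proof buys existence and uniqueness in a few lines without any of this. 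The layer-potential alternative you mention at the end is indeed what the paper uses, but only in later sections for the meromorphic continuation, not here.
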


\begin{proof}
For real $R>0$ the proof is slightly more elementary and follows that of \cite[Proposition 1]{barcarbs}: Pick a $g\in H^m(X)$ such that $\mathcal{D}^j_R g=u_j$ for $j=0,...,m-1$. For $R>0$, we define the norm $\|v\|_{H^m_R(\R^n)}^2:=\int_{\R^n} (R^2+|\xi|^2)^m|\hat{v}(\xi)|^2\mathrm{d}\xi$. Consider the unique minimizer $u_0\in H^m(\R^n)$ to the extremal problem $\inf\{\|v\|_{H^m_R(\R^n)}^2: v|_X=g\}$. It follows by the Euler-Lagrange equations that $(R^2-\Delta)^mu_0=0$ in weak sense in $\Omega$. By elliptic regularity, $u:=u_0|_\Omega\in H^{2m}(\Omega)$. By construction, $\mathcal{D}^j_R u=u_j$, $j=0,...,m-1$. The weak solution $u$ is uniquely determined since $u_0$ is.

The general case $R \in \Gamma_{\frac{\pi}{n+1}}$ follows from the coercivity of the quadratic form $\int_{\R^n} (R^2+|\xi|^2)^m|\hat{v}(\xi)|^2\mathrm{d}\xi$ when $\mathrm{arg}(R^{2m})< \pi$. 
\end{proof}

We refer to \eqref{thebvpgener} as the Barcel\'o-Carbery boundary value problem. While \cite{barcarbs} stated the boundary value problem for bounded, convex $\Omega$, the convexity assumption may be omitted when $\partial \Omega$ is smooth.

\begin{prop}
\label{compformag}
Suppose that $h_R\in H^{2m}(\Omega)$ is the unique weak solution to the boundary value problem
\begin{align*}
\begin{cases}
(R^2-\Delta)^mh_R&=0\quad\mbox{in $\Omega$}\\
\vspace{-3mm}\\
\mathcal{D}^j_R h_R&=
\begin{cases} 
R^j, \;& j\mbox{   even}\\
0, \;& j\mbox{   odd}.
\end{cases}
\;\;, \; j=0,...,m-1.
\end{cases}
\end{align*}
Then the following identity holds 
$$\textnormal{mag}(R\cdot X)=\frac{\textnormal{vol}_n(X)}{n!\omega_n}R^n-\frac{1}{n!\omega_n}\sum_{\frac{m}{2}<j\leq m} R^{n-2j}\int_{\partial X}\mathcal{D}^{2j-1}_R h_R\,\rd S.$$
\end{prop}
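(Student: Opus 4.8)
The plan is to obtain Proposition~\ref{compformag} from the Barcel\'o--Carbery identity recalled in the introduction by applying it to the dilated domain $R\cdot X$ and then changing variables $y=Rx$. Applied to $R\cdot X$, with $h\in H^m(\R^n)$ the unique solution of $(1-\Delta)^m h=0$ weakly in $\R^n\setminus(R\cdot X)$ and $h=1$ on $R\cdot X$, that identity reads
$$\textnormal{mag}(R\cdot X)=\frac{\textnormal{vol}_n(R\cdot X)}{n!\omega_n}+\frac{1}{n!\omega_n}\sum_{\frac m2<j\le m}(-1)^j\binom{m}{j}\int_{R\cdot\partial X}\partial_\nu\Delta^{j-1}h\,\rd S.$$
Set $h_R(x):=h(Rx)$ on $\Omega$. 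Since $\Delta_y=R^{-2}\Delta_x$ under $y=Rx$, the equation becomes $(R^2-\Delta)^m h_R=0$ in $\Omega$; and since dilation is a bounded isomorphism of $H^m(\R^n)$, the function $h_R$ extended by the constant $1$ on $X$ again lies in $H^m(\R^n)$, so in particular $h_R=1$ on $X$.

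First I would check that $h_R$ solves the boundary value problem stated in Proposition~\ref{compformag}, so that the uniqueness in Proposition~\ref{uniquesolu} identifies the two. Membership of the extended $h_R$ in $H^m(\R^n)$ is equivalent to matching of the Cauchy data $\gamma_0 h_R,\partial_\nu h_R,\dots,\partial_\nu^{m-1}h_R$ on $\partial X$ with those of the constant $1$, i.e.\ $\gamma_0 h_R=1$ and $\partial_\nu^k h_R=0$ for $1\le k\le m-1$. Writing $R^2-\Delta=-\partial_\nu^2+(\text{tangential and first order normal terms})$ in boundary normal coordinates shows that $(\mathcal{D}^0_R,\dots,\mathcal{D}^{m-1}_R)$ is a triangular, hence invertible, reparametrisation of $(\gamma_0,\partial_\nu,\dots,\partial_\nu^{m-1})$, and that $\mathcal{D}^j_R$ applied to any function whose Cauchy data are $(1,0,\dots,0)$ returns $\mathcal{D}^j_R 1$, which equals $R^j$ for $j$ even and $0$ for $j$ odd. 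Thus the data matching is equivalent to the boundary conditions of Proposition~\ref{compformag}; and elliptic regularity for $(R^2-\Delta)^m$ with smooth data gives $h_R\in C^\infty(\overline{\Omega})$, so it is the $H^{2m}(\Omega)$-solution provided by Proposition~\ref{uniquesolu} and the traces below are classical.

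Next I would track the dilation factors in the boundary integrals: under $y=Rx$ one has $\rd S_y=R^{n-1}\rd S_x$, the exterior normal derivative scales by $R^{-1}$, and each power of $\Delta$ by $R^{-2}$, so $\int_{R\cdot\partial X}\partial_\nu\Delta^{j-1}h\,\rd S=R^{n-2j}\int_{\partial X}\partial_\nu\Delta^{j-1}h_R\,\rd S$; together with $\textnormal{vol}_n(R\cdot X)=R^n\textnormal{vol}_n(X)$ this already gives the asserted formula, except that the boundary sum appears with $\partial_\nu\Delta^{j-1}$ in place of $\partial_\nu(R^2-\Delta)^{j-1}=\mathcal{D}^{2j-1}_R$. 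These two boundary expressions agree, but not term by term, and the equality uses both the boundary conditions and the equation $(R^2-\Delta)^m h_R=0$ in $\Omega$. The cleanest way to establish it, and the one I would carry out, is to re-run the computation of \cite[Section~6]{barcarbs} directly with the weighted form $\|v\|_{H^m_R(\R^n)}^2=\int_{\R^n}(R^2+|\xi|^2)^m|\hat v(\xi)|^2\,\mathrm{d}\xi$: the scaling identity $\|f\|_{H^m(\R^n)}^2=R^{n-2m}\|f(R\,\cdot\,)\|_{H^m_R(\R^n)}^2$ turns the Meckes formula \eqref{magsobolev} for $R\cdot X$ into $\textnormal{mag}(R\cdot X)=\frac{R^{n-2m}}{n!\omega_n}\inf\{\|v\|_{H^m_R(\R^n)}^2:v=1\textnormal{ on }X\}$, the infimum is attained at $h_R$, and evaluating $\|h_R\|_{H^m_R}^2=\langle(R^2-\Delta)^m h_R,h_R\rangle_{L^2(\R^n)}$ by the higher order Green's formula for $(R^2-\Delta)^m$ produces the volume term $R^{2m}\textnormal{vol}_n(X)$ together with boundary terms in which $\mathcal{D}^{2j-1}_R h_R$ is paired against the $X$-side Cauchy datum $(R^2-\Delta)^{m-j}1=R^{2(m-j)}$; the overall factor $R^{n-2m}=R^{-1}$ then reproduces the powers $R^n$ and $R^{n-2j}$, and the range $\frac m2<j\le m$ is forced exactly as in \cite{barcarbs}, since the contributions with $j\le m/2$ pair against the matched, hence vanishing, Cauchy data of $h_R$.

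The main obstacle is precisely this last bookkeeping: verifying that the $2m$ boundary terms of the higher order Green's identity for $(R^2-\Delta)^m$ collapse to exactly the operators $\mathcal{D}^{2j-1}_R$ with $\frac m2<j\le m$, with the correct combinatorial prefactors, and keeping the convention for $\partial_\nu$ consistent with the exterior limiting procedure that defines the boundary integrals in \cite{barcarbs}. The remaining ingredients --- the change of variables, the triangular structure of the boundary operators, and the invocations of Proposition~\ref{uniquesolu} and of elliptic regularity --- are routine.
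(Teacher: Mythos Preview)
Your proposal is correct and is exactly what the paper's one-sentence proof (``A change of coordinates and the results of \cite[Section 6]{barcarbs}'') abbreviates: rescale Meckes' variational formula \eqref{magsobolev} to the $H^m_R$-norm on the fixed domain $X$, identify the minimiser with $h_R$ via Proposition~\ref{uniquesolu}, and evaluate $\|h_R\|^2_{H^m_R}$ by the higher-order Green's identity for $(R^2-\Delta)^m$, where the odd-index boundary data $\mathcal D^{2j-1}_R h_R=0$ for $2j-1\le m-1$ kill the terms with $j\le m/2$. Your first route (directly rescaling the $\Delta^{j-1}$-form of the Barcel\'o--Carbery identity) is a detour that you correctly abandon; the bookkeeping you flag as the main obstacle is precisely the content of \cite[Section~6]{barcarbs}, transported verbatim from $(1-\Delta)$ to $(R^2-\Delta)$.
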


\begin{proof}
The proof is a computational exercise starting from \cite[Theorem 5]{barcarbs}. As stated in \cite{barcarbs}, \cite[Theorem 5]{barcarbs} holds for compact convex domains but its proof readily extends to compact smooth domains. In particular, we have for any compact domain $Y$ with smooth boundary that 
\begin{equation}
\label{reqzero}
\textnormal{mag}(Y)=\frac{\textnormal{vol}_n(Y)}{n!\omega_n}+\frac{1}{n!\omega_n}\sum_{\frac{m}{2}<j\leq m} (-1)^j\begin{pmatrix} m\\j\end{pmatrix}\int_{\partial Y}\partial_\nu\Delta^{j-1} h_Y\,\rd S,
\end{equation}
where the boundary integrals are defined from an exterior limit and $h_Y\in H^m(\R^n)$ solves the problem \eqref{wekaformra}. By the same argument as in the proof of Proposition \ref{uniquesolu}, the problem \eqref{wekaformra} is equivalent to finding $h_Y|_{\R^n\setminus Y}\in H^{2m}(\R^n\setminus Y)$ which solves
\begin{align*}
\begin{cases}
(1-\Delta)^mh_Y&=0\quad\mbox{in $\R^n\setminus Y$}\\
\vspace{-3mm}\\
\mathcal{D}^j_0 h_Y&=
\begin{cases} 
1, \;& j=0\\
0, \;& j=1,...,m-1.
\end{cases}
\end{cases}
\end{align*}
Here we use the notation 
$$\mathcal{D}^j_0:=\begin{cases}
\partial_\nu \circ(-\Delta)^{(j-1)/2}, \; &\mbox{when $j$ is odd},\\
\gamma_0\circ(-\Delta)^{j/2}, \; &\mbox{when $j$ is even}.
\end{cases}$$

If we take $Y=R\cdot X$, a short computation with the change of variables $x\mapsto Rx$ shows that the function $h_R\in H^{2m}(\Omega)$ defined by $h_R(x):=h_{R\cdot X}(Rx)$ is the unique solution to the problem
\begin{align*}
\begin{cases}
(R^2-\Delta)^mh_R&=0\quad\mbox{in $\Omega$}\\
\vspace{-3mm}\\
\mathcal{D}^j_R h_R&=
\begin{cases} 
R^j, \;& j\mbox{   even}\\
0, \;& j\mbox{   odd}.
\end{cases}
\;\;, \; j=0,...,m-1.
\end{cases}
\end{align*}
Therefore, Equation \eqref{reqzero} implies the identity
\begin{align}
\label{reqzerowithd0}
\textnormal{mag}(R\cdot X)&=\frac{\textnormal{vol}_n(X)}{n!\omega_n}R^n+\frac{1}{n!\omega_n}\sum_{\frac{m}{2}<j\leq m} (-1)^j\begin{pmatrix} m\\j\end{pmatrix}\int_{R\cdot \partial X}\partial_\nu\Delta^{j-1} h_{R\cdot X}\,\rd S=\\
\nonumber
&=\frac{\textnormal{vol}_n(X)}{n!\omega_n}R^n-\frac{1}{n!\omega_n}\sum_{\frac{m}{2}<j\leq m} \begin{pmatrix} m\\j\end{pmatrix}R^{n-2j}\int_{ \partial X} \mathcal{D}^{2j-1}_0 h_{R}\,\rd S
\end{align}
What needs to be proven is that the relation \eqref{reqzerowithd0}, formulated in terms of $\mathcal{D}^j_0$, can be rewritten as stated in the proposition using the boundary conditions $\mathcal{D}^j_R$.

The binomial theorem gives us $\mathcal{D}^{2j-1}_R=\sum_{l=1}^{j} \begin{pmatrix} j-1\\ l \end{pmatrix} R^{2j-2l}\mathcal{D}^{2l-1}_0$. Noting that $\mathcal{D}^{2l-1}_0 h_R=0$ for $l\leq m/2$ we arrive at the identity
\begin{align*}
\sum_{\frac{m}{2}<j\leq m} R^{n-2j}\int_{\partial X}\mathcal{D}^{2j-1}_R h_R\,\rd S&=\sum_{\frac{m}{2}<j\leq m}\sum_{l=1}^{j} \begin{pmatrix} j-1\\ l-1 \end{pmatrix} R^{n-2l}\int_{\partial X}\mathcal{D}^{2l-1}_0 h_R\,\rd S=\\
&=\sum_{\substack{\frac{m}{2}<j\leq m,\\ \frac{m}{2}<l\leq j}} \begin{pmatrix} j-1\\ l-1 \end{pmatrix} R^{n-2l}\int_{\partial X}\mathcal{D}^{2l-1}_0 h_R\,\rd S=\\
&=\sum_{\frac{m}{2}<l\leq m}\sum_{j=l}^{m} \begin{pmatrix} j-1\\ l-1 \end{pmatrix} R^{n-2l}\int_{\partial X}\mathcal{D}^{2l-1}_0 h_R\,\rd S.
\end{align*}
By the hockey stick identity, 
$$\sum_{j=l}^{m} \begin{pmatrix} j-1\\ l-1 \end{pmatrix} =\begin{pmatrix} m\\ l \end{pmatrix}.$$
Therefore, 
\begin{align*}
\sum_{\frac{m}{2}<j\leq m} R^{n-2j}\int_{\partial X}\mathcal{D}^{2j-1}_R h_R\,\rd S&=\sum_{\frac{m}{2}<l\leq m} \begin{pmatrix} m\\ l \end{pmatrix} R^{n-2l}\int_{\partial X}\mathcal{D}^{2l-1}_0 h_R\,\rd S,
\end{align*}
and the proposition follows.
\end{proof}

Our analysis of the magnitude function $\mathcal{M}_X(R):= \textnormal{mag}(R\cdot X)$ is based on a precise description of the solution $h_R$ as $R$ varies.

\subsection{The layer potentials}
\label{blaypot}

The main technical tool we will use to move from a problem on $\Omega$ to $\partial X$ is that of boundary layer potentials. We consider the function 
$$K(R;z):=\frac{\kappa_n}{R} \mathrm{e}^{-R|z|}, \quad z\in \R^n.$$
Here $\kappa_n>0$ is a normalizing constant chosen such that
$$(R^2-\Delta)^mK=\delta_0$$  in the sense of distributions on $\R^n$. The function $K(R;\cdot)$ is a fundamental solution to $(R^2-\Delta)^m$. By considerations in Fourier space, one readily deduces that the convolution operator $f\mapsto K(R;\cdot)*f$ defines a holomorphic function $\C_+=\{R: \mathrm{Re}(R)>0\} \to \mathbbm{B}(L^2(\R^n))$ which coincides with $(R^2-\Delta)^{-m}$. 

For $l=0,\ldots,n$, we define the functions
$$K_l(R;x,y):=(-1)^l\mathcal{D}^{n-l}_{R,y}K(R;x-y), \quad x\in \R^n, \; y\in \partial X.$$
Here $\mathcal{D}^l_{R,y}$ denotes $\mathcal{D}^l_R$ acting in the $y$-variable. We also consider the distributions
$$K_{j,k}(R;x,y):= \mathcal{D}^j_{R,x}K_{k}(R;x,y), \quad x\in \partial X.$$

\begin{deef}
For $f\in C^\infty(\partial X)$ and $R\in \C\setminus\{0\}$, we define the operators 
\begin{align*}
\mathcal{A}_k(R)f(x)&:=\int_{\partial X} K_k(R;x,y)f(y)\mathrm{d}S(y), \quad x\in \Omega ,\\
A_{j,k}(R)f(x)&:=\int_{\partial X} K_{j,k}(R;x,y)f(y)\mathrm{d}S(y), \quad x\in \partial X .
\end{align*}
The integral defining $A_{j,k}(R)$ is defined in the sense of an exterior limit.
\end{deef}

We remark that $A_{j,k}(R):C^\infty(\partial X)\to C^\infty(\partial X)$ and $\mathcal{A}_k(R):C^\infty(\partial X)\to C^\infty(\Omega)$ for any $R\in \C\setminus\{0\}$.

\begin{lem}
\label{exteningtoomega} 
For any $f\in C^\infty(\partial X)$, $j= 0, \ldots, n$ and $R\in \C\setminus\{0\}$, the function $u:=\mathcal{A}_{j}(R)f\in C^\infty(\Omega)$ satisfies
$$(R^2-\Delta)^mu=0 \quad\mbox{in $\Omega$}.$$
For $R \in \C_+$, the operator $\mathcal{A}_{j}(R)$ extends to a continuous operator 
$$\mathcal{A}_{j}(R):H^s(\partial X)\to H^{s+j+1/2}(\Omega), \quad s>0.$$
\end{lem}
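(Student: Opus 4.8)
\emph{Plan.} I would prove the two assertions in turn. \emph{The differential identity} holds for every $R\in\C\setminus\{0\}$ and is essentially differentiation under the integral sign. If $x$ ranges over a compact subset of the open set $\Omega$ and $y\in\partial X$, then $x\neq y$, so $K_k(R;x,y)=(-1)^k\mathcal{D}^{n-k}_{R,y}K(R;x-y)$ and all of its $x$-derivatives are continuous and uniformly bounded there, because $K(R;\cdot)$ is smooth away from the origin; hence $u:=\mathcal{A}_k(R)f\in C^\infty(\Omega)$. Moreover $\mathcal{D}^{n-k}_{R,y}$ is a composition of a power of $R^2-\Delta_y$, at most one normal derivative, and restriction to $\partial X$, all acting in the $y$-variable, so it commutes with $(R^2-\Delta_x)^m$; and $(R^2-\Delta_x)^mK(R;x-y)=0$ for $x\neq y$ since $(R^2-\Delta)^mK(R;\cdot)=\delta_0$. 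Therefore $(R^2-\Delta_x)^mK_k(R;x,y)=0$ on $\Omega\times\partial X$, and integrating against $f$ gives $(R^2-\Delta)^mu=0$ in $\Omega$. This step uses only $R\neq 0$.

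\emph{The mapping property.} The idea is to reduce $\mathcal{A}_k(R)$ to the single-layer potential of $(R^2-\Delta)^m$ composed with a differential operator. Let $\mathcal{S}_R$ send a density $g$ on $\partial X$ to the restriction to $\Omega$ of $(R^2-\Delta)^{-m}(g\,\delta_{\partial X})$, where $\delta_{\partial X}$ is the surface measure of $\partial X$ regarded as a compactly supported distribution on $\R^n$; this is the classical single-layer potential of $(R^2-\Delta)^m$, and $(\mathcal{S}_R g)(x)=\int_{\partial X}K(R;x-y)\,g(y)\,\rd S(y)$ for $x\in\Omega$. Writing $\ell:=n-k$ and using that $\Delta$ is translation invariant and of even order, the $y$-differentiations in $\mathcal{D}^{n-k}_{R,y}K(R;x-y)$ may be carried over to the $x$-variable: for $\ell$ even, $\mathcal{D}^{n-k}_{R,y}K(R;x-y)=(R^2-\Delta_x)^{\ell/2}K(R;x-y)$ evaluated at $y\in\partial X$, and for $\ell$ odd, $\mathcal{D}^{n-k}_{R,y}K(R;x-y)=-\nu(y)\cdot\nabla_x(R^2-\Delta_x)^{(\ell-1)/2}K(R;x-y)$ evaluated at $y\in\partial X$. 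Pulling the $x$-operator out of the $y$-integral then yields, up to a fixed sign,
\[
\mathcal{A}_k(R)f=
\begin{cases}
(R^2-\Delta)^{\ell/2}\,\mathcal{S}_R f, & \ell\ \text{even},\\
\nabla\!\cdot\,(R^2-\Delta)^{(\ell-1)/2}\,\mathcal{S}_R(\nu f), & \ell\ \text{odd},
\end{cases}
\]
so $\mathcal{A}_k(R)$ is a constant-coefficient differential operator of order exactly $\ell=n-k$ applied to the (scalar, resp.\ $\R^n$-valued) single-layer potential $\mathcal{S}_R$.

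The conclusion now follows from the classical mapping property of single-layer potentials on a smooth compact hypersurface. For $R\in\C_+$ the kernel $K(R;z)=\tfrac{\kappa_n}{R}\mathrm{e}^{-R|z|}$ and all its derivatives decay exponentially, so $\mathcal{S}_R$ is well defined on the unbounded domain $\Omega$ and $\mathcal{S}_R:H^s(\partial X)\to H^{s+n+1/2}(\Omega)$ is bounded for $s>0$ (the single layer of an operator of order $n+1$ gains $n+\tfrac12$ Sobolev orders). A differential operator of order $\ell$ maps $H^{t}(\Omega)$ to $H^{t-\ell}(\Omega)$, so the displayed formula gives $\mathcal{A}_k(R):H^s(\partial X)\to H^{s+n+1/2-\ell}(\Omega)=H^{s+k+1/2}(\Omega)$, as asserted. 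The main obstacle is that one cannot simply Fourier transform on $\R^n$: first, $f\,\delta_{\partial X}$ fails to lie in $H^{s-1/2}(\R^n)$ once $s\geq\tfrac12$, so the single-layer estimate for all $s>0$ genuinely requires boundary elliptic regularity for $(R^2-\Delta)^m$ (valid since its coefficients are constant and $R^2\notin(-\infty,0]$ for $R\in\C_+$) together with the boundedness of the associated boundary integral operators on $\partial X$ — equivalently, the transmission property; and second, since $\Omega$ is unbounded one must have $\mathrm{Re}(R)>0$ in order for the layer potential to decay and hence land in $H^{s+k+1/2}(\Omega)$ rather than only in $H^{s+k+1/2}_{\mathrm{loc}}(\Omega)$, and the same exponential decay is what controls the $R$-dependence of the bounds needed later.
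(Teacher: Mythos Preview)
Your argument is correct and takes a genuinely different route from the paper for the mapping property. The paper proceeds by decomposing $\overline{\Omega}=\overline{\Omega'}\cup\overline{\Omega''}$ into a bounded collar $\Omega'$ and a far region $\Omega''$ at positive distance from $\partial X$: on $\Omega'$ one invokes elliptic regularity for the layer potentials on a smooth compact domain to get $\|u\|_{H^{s+k+1/2}(\Omega')}\lesssim\|f\|_{H^s(\partial X)}$, and on $\Omega''$ one uses that the kernel $K_k(R;x,y)$ is smooth and exponentially decaying for $|x-y|>r>0$ when $\mathrm{Re}(R)>0$. By contrast, you factor $\mathcal{A}_k(R)$ as a constant-coefficient differential operator of order $n-k$ composed with the single-layer potential $\mathcal{S}_R$ of $(R^2-\Delta)^m$, and then appeal once to the classical mapping property $\mathcal{S}_R:H^s(\partial X)\to H^{s+n+1/2}(\Omega)$. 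Your factorisation has the virtue of making the Sobolev gain $k+\tfrac12$ transparent and of explaining structurally why the index should be $k$; it also packages the near/far split into the single-layer theory you cite. The paper's decomposition is slightly more self-contained in that it isolates exactly where $\mathrm{Re}(R)>0$ enters (only on $\Omega''$) and avoids naming the transmission machinery, but both proofs ultimately rest on the same boundary-regularity input. Your treatment of the differential identity is the same as the paper's, only spelled out in more detail.
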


\begin{proof}
It is clear from the construction that $u:=\mathcal{A}_{j}(R)f$ satisfies $(R^2-\Delta)^mu=0$ in $\Omega$. By elliptic regularity, $u$ is a smooth function in $\Omega$. Decompose $\overline{\Omega}=\overline{\Omega'}{\cup}\overline{\Omega''}$ where $\Omega'$ is a bounded domain with smooth boundary and $\Omega''$ is a smooth domain with boundary, compact complement, $\mathrm{d}(X,\Omega'')=r>0$. 
We have that 
$$\|u\|_{H^{s+j+1/2}(\Omega)}^2\lesssim\|u\|_{H^{s+j+1/2}(\Omega')}^2+\|u\|_{H^{s+j+1/2}(\Omega'')}^2.$$
By elliptic regularity on smooth, compact domains, $\|u\|_{H^{s+j+1/2}(\Omega')}\lesssim \|f\|_{H^s(\partial X)}$. Since $K_{j}(R;x,y)$ is smooth when $|x-y|>r$ and the kernel decays exponentially in the off-diagonal direction for $R \in \C_+$, one easily deduces $\|u\|_{H^{s+j+1/2}(\Omega'')}\lesssim \|f\|_{H^s(\partial X)}$ (as in, for instance, \cite[Lemma 1.6]{goffsch}).
\end{proof}

\begin{prop}
\label{greensandal}
Let $R \in \C_+$. If $u\in H^{2m}(\Omega)$ solves $(R^2-\Delta)^mu=0$ in $\Omega$, then 
$$u=\sum_{l=0}^n \mathcal{A}_l(R)\left(\mathcal{D}_R^lu\right).$$
\end{prop}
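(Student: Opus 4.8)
The plan is to derive the representation formula from Green's second identity for the second--order operator $P_R:=R^2-\Delta$, iterated $m$ times to handle $(R^2-\Delta)^m=P_R^m$, using $K(R;x-\cdot)$ as fundamental solution. Fix $x\in\Omega$ and set $v:=K(R;x-\cdot)$, so that $P_R^m v=\delta_x$ while $P_R^m u=0$ in $\Omega$. The algebraic input is the telescoping identity, valid for smooth $a,b$,
\[
a\,P_R^m b-b\,P_R^m a=\sum_{j=0}^{m-1}\Bigl[(P_R^j a)\,P_R\bigl(P_R^{m-1-j}b\bigr)-P_R\bigl(P_R^j a\bigr)\,\bigl(P_R^{m-1-j}b\bigr)\Bigr],
\]
whose right--hand side visibly telescopes to $a\,P_R^m b-b\,P_R^m a$. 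The $j$-th summand is exactly the integrand $\alpha\,P_R\beta-\beta\,P_R\alpha$ of the ordinary Green identity for $P_R$ applied to $\alpha=P_R^j u$, $\beta=P_R^{m-1-j}v$; there the $R^2$-terms cancel, so only $-\Delta$ contributes boundary integrals over $\partial\Omega=\partial X$. Summing over $j$ and using that $\int_\Omega\bigl(u\,P_R^m v-v\,P_R^m u\bigr)$ formally equals $u(x)$ (because $P_R^m v=\delta_x$ and $P_R^m u=0$), this gives, up to the orientation of $\nu$,
\[
u(x)=\sum_{j=0}^{m-1}\int_{\partial X}\Bigl[\partial_\nu\bigl(P_R^{m-1-j}v\bigr)\,\gamma_0\bigl(P_R^j u\bigr)-\bigl(P_R^{m-1-j}v\bigr)\,\partial_\nu\bigl(P_R^j u\bigr)\Bigr]\,\rd S .
\]

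It then remains to identify these $2m$ boundary terms with the summands of $\sum_{l=0}^n\mathcal{A}_l(R)(\mathcal{D}_R^l u)$ by a parity count based on $n=2m-1$. The even Cauchy datum $\mathcal{D}_R^{2j}u=\gamma_0(P_R^j u)$ is paired above with $\partial_\nu\bigl(P_R^{m-1-j}v\bigr)$; since $n-2j=2(m-1-j)+1$ is odd, $\mathcal{D}^{n-2j}_R=\partial_\nu(R^2-\Delta)^{m-1-j}$, hence $K_{2j}(R;x,y)=(-1)^{2j}\mathcal{D}^{n-2j}_{R,y}K(R;x-y)=\partial_\nu\bigl(P_R^{m-1-j}v\bigr)(y)$. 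Dually, the odd Cauchy datum $\mathcal{D}_R^{2j+1}u=\partial_\nu(P_R^j u)$ is paired with $-\bigl(P_R^{m-1-j}v\bigr)$; since $n-(2j+1)=2(m-1-j)$ is even, $\mathcal{D}^{n-2j-1}_R=\gamma_0(R^2-\Delta)^{m-1-j}$, hence $K_{2j+1}(R;x,y)=(-1)^{2j+1}\mathcal{D}^{n-2j-1}_{R,y}K(R;x-y)=-\bigl(P_R^{m-1-j}v\bigr)(y)$. As $j$ runs over $0,\dots,m-1$ the indices $2j$ and $2j+1$ exhaust $\{0,\dots,n\}$, so the two expressions agree summand by summand, which is the assertion --- provided the overall sign is fixed consistently, the conventions for $\nu$ (outward from $\Omega$), the factors $(-1)^l$ in $K_l$, and the normalization of the fundamental solution combining to give exactly $+u(x)$.

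The genuine technical point, and the step I expect to be the main obstacle, is justifying the integration by parts on the unbounded domain $\Omega$. I would apply Green's identity on $\bigl(B_\rho\setminus\overline{B_\epsilon(x)}\bigr)\cap\Omega$, where $P_R^m v$ is smooth, in its Sobolev form (which is legitimate since $u\in H^{2m}(\Omega)$ has all the traces $\mathcal{D}_R^l u$, $l\le n$, by the trace theorem), and then let $\epsilon\to0$ and $\rho\to\infty$. The sphere $\partial B_\epsilon(x)$ contributes $u(x)$ from the singularity of the fundamental solution. The sphere $\partial B_\rho$ contributes nothing: for $R\in\C_+$ the kernel $K(R;x-\cdot)$ and all its derivatives decay like $e^{-\mathrm{Re}(R)\rho}$, while interior elliptic estimates for $P_R^m u=0$ bound $u$ and each of its derivatives uniformly on $\{\mathrm{d}(\cdot,X)\ge 1\}$, so the $\partial B_\rho$-integral is $O\!\bigl(\rho^{\,n-1}e^{-c\rho}\bigr)\to0$. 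The remaining values $\mathrm{Re}(R)\le 0$ then follow by analytic continuation in $R$. Apart from this limiting argument and the sign bookkeeping, the proof is just the telescoped Green identity together with the parity matching above.
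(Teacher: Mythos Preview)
Your approach is exactly what the paper has in mind—it states the result as ``a direct consequence of Green's formulas'' without further detail—and your telescoping identity, parity matching of $K_l$ against $\mathcal{D}_R^l$, and treatment of the contribution at $\partial B_\rho$ for $R\in\C_+$ are all correct. The ``uniform'' bounds you invoke on $u$ and its derivatives away from $\partial X$ do follow from interior elliptic estimates combined with the global $H^{2m}(\Omega)$ bound: on each unit ball $B_1(y_0)\subset\Omega$ interior regularity gives $\|u\|_{C^k(B_{1/2}(y_0))}\lesssim\|u\|_{L^2(B_1(y_0))}\le\|u\|_{L^2(\Omega)}$.

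The one step that does not go through is the analytic continuation to $\mathrm{Re}(R)\le 0$. The identity you want to continue is $u=\sum_l\mathcal{A}_l(R)(\mathcal{D}_R^l u)$ for a \emph{fixed} $u\in H^{2m}(\Omega)$ satisfying $(R^2-\Delta)^m u=0$; the function $u$ is tied to one particular value of $R$ and does not vary holomorphically with it, so there is no family of identities to continue. Worse, for $\mathrm{Re}(R)<0$ the kernel $K(R;x-\cdot)$ grows at exactly the exponential rate at which an $H^{2m}$ solution $u$ decays, and a one-dimensional check (take $n=m=1$, $\Omega=\R\setminus[-1,1]$, $u(y)=e^{Ry}$ on $(1,\infty)$) shows the $\partial B_\rho$ contribution is a nonzero constant; the representation formula with this choice of fundamental solution genuinely fails there. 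This gap is harmless for the paper, since the proposition is only used in Lemma~\ref{caldprojisproj} with $R$ in a sector $\Gamma\subset\C_+$, where your argument is complete.
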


\begin{proof}
We prove the proposition using Green's formula. Let $(\cdot,\cdot)_\Omega$ denote the bilinear pairing of distributions with test functions on $\Omega$. For $\varphi\in C^\infty_c(\overline{\Omega})$ and $x\in \Omega$, we apply Green's formula to write
\begin{align*}
\varphi(x)&=(\delta_x,\varphi)_\Omega=((R^2-\Delta_y)^mK(R;\cdot-x),\varphi)_\Omega=\\
&=((R^2-\Delta_y)^{m-1}K(R;\cdot-x),(R^2-\Delta)\varphi)_\Omega+\mathcal{A}_0(R)\left(\mathcal{D}_R^0\varphi\right)(x)+\mathcal{A}_1(R)\left(\mathcal{D}_R^1\varphi\right)(x).
\end{align*}
Proceeding by induction, we obtain 
\begin{align*}
\varphi(x)=(K(R;\cdot-x),(R^2-\Delta)^m\varphi)_\Omega+\sum_{l=0}^n \mathcal{A}_l(R)\left(\mathcal{D}_R^l\varphi\right)(x).
\end{align*}
By Lemma \ref{exteningtoomega} and a density argument, we conclude for $u\in H^{2m}(\Omega)$ and $\mathrm{Re}(R)>0$ that
\begin{align*}
u(x)=(K(R;\cdot-x),(R^2-\Delta)^mu)_\Omega+\sum_{l=0}^n \mathcal{A}_l(R)\left(\mathcal{D}_R^lu\right)(x).
\end{align*}
In particular, if $u\in H^{2m}(\Omega)$ solves $(R^2-\Delta)^mu=0$ in $\Omega$, then
\begin{align*}
u(x)=\sum_{l=0}^n \mathcal{A}_l(R)\left(\mathcal{D}_R^lu\right)(x).
\end{align*}
\end{proof}

\large
\section{Reduction to the boundary}
\normalsize
\label{reducttobound}

The Barcel\'o-Carbery boundary value problem \eqref{thebvpgener} can be reduced to an elliptic pseudo-differential problem on the boundary. We work under the same assumptions as in the previous section: $X\subseteq \R^n$ is a smooth, compact domain, $n=2m-1$ and $\Omega:=\R^n\setminus X$. On the closed manifold $\partial\Omega = \partial X$, the problem is reduced to a study of pseudo-differential operators with parameters, a natural tool for explicit computations of asymptotic expansions. Pseudo-differential methods are standard in the literature, and the reader can find the basic ideas in the text book \cite[Part II and III]{GrubbDistOp}. We recall the salient properties of pseudo-differential operators with parameters in Appendix \ref{briefintropara}.

\subsection{The boundary operators}

We consider a sector $\Gamma \subset \C_+$. The operators $A_{jk}$ will turn out to be pseudo-differential operators with parameter $R\in \Gamma$. The theory of pseudo-differential operators with parameter is well developed and treats the parameter $R$ as an additional covariable. Asymptotic expansions of symbols are done in $(R,\xi)$ simultaneously. References can be found in Appendix \ref{briefintropara}.

\begin{thm}
\label{theboundaryops}
For any $R\in \C \setminus \{0\}$, the operators $A_{j,l}(R)$ are pseudo-differential operators of order $j-l$ on $\partial X$. In fact, $A_{j,l}\in \Psi^{j-l}(\partial X;\Gamma)$ are pseudo-differential operators with parameter $R\in \Gamma$ of order $j-l$. Moreover, the following symbol computations hold:
\begin{enumerate}
\item[a.)] The {\bf principal symbol} of $A_{j,l}$ as a pseudo-differential operator with parameter satisfies 
$$\sigma_{j-l,R}(A_{j,l})(x',\xi',R)=c_{j,l}(R^2+|\xi'|^2)^{(j-l)/2}, \; (x',\xi')\in T^*\partial X,$$
where $(c_{j,l})_{j,l=0}^n\in M_{2m}(\C)$ is a constant matrix independent of $X$, whose entries only depend on $n$.
\item[b.)] In suitable coordinates, the part of the {\bf full symbol of order $j-l-1$} of $A_{j,l}$ as a pseudo-differential operator with parameter satisfies 
$$\sigma_{j-l-1,R}(A_{j,l})(x',\xi',R)=c_{j,l,-1}H(x')(R^2+|\xi'|^2)^{(j-l-1)/2}, \; (x',\xi')\in T^*\partial X,$$
where $(c_{j,l,-1})_{j,l=0}^n\in M_{2m}(\C)$ is a constant matrix independent of $X$, whose entries only depend on $n$, and $H$ denotes the mean curvature of $\partial X$.
\end{enumerate}
\end{thm}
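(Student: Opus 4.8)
The plan is to treat each $A_{j,l}(R)$ as a boundary integral operator whose kernel $K_{j,l}(R;x,y)$ is built from the explicit fundamental solution $K(R;z)=\tfrac{\kappa_n}{R}\mathrm{e}^{-R|z|}$ by applying the trace-type operators $\mathcal D^{n-l}_{R,y}$ and $\mathcal D^j_{R,x}$. The first step is to localize near the diagonal: away from the diagonal the kernel is smooth and contributes a smoothing (parameter-dependent) operator, so we may work in a tubular neighbourhood of a boundary point, introduce boundary normal (geodesic) coordinates $x=(x',x_n)$ with $\partial X=\{x_n=0\}$ and $X=\{x_n\le 0\}$, and in these coordinates Taylor-expand the Euclidean distance $|x-y|$ in the transversal variables. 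The key identity is that, before restriction to $\partial X$, $\mathcal A_l(R)f$ is a single- or double-layer type potential; applying $\mathcal D^j_{R,x}$ and passing to the exterior limit $x_n\to 0^+$ produces, via the standard jump relations for layer potentials of the operators $(R^2-\Delta)^k$, a pseudodifferential operator on $\partial X$. To see it is pseudodifferential \emph{with parameter} $R\in\Gamma$, I would pass to the partial Fourier transform in $x'$ of the localized kernel and check that the resulting symbol is, for each fixed $x'$, a classical parameter-dependent symbol in $(\xi',R)$ jointly, which follows from the explicit form: the Fourier transform of $\tfrac1R\mathrm{e}^{-R|z|}$ in $\R^n$ is $\tfrac1R\cdot\tfrac{c_n R}{(R^2+|\zeta|^2)^m}$, a rational function homogeneous of degree $-2m$ in $(\zeta,R)$, and the operators $\mathcal D^j_R$ multiply by polynomials in $(\xi,R)$ of the right degree. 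Homogeneity bookkeeping then gives order $j-l$.

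Concretely, for the symbol computations I would use the partial Fourier transform in the tangential variables. Write $\widehat{K}(R;\xi',x_n)=\int K(R;x',x_n)\mathrm{e}^{-ix'\cdot\xi'}\rd x' = \tfrac{c_n}{\mu^{2m-1}}e^{-\mu|x_n|}\cdot p(\mu,x_n)$ type expression where $\mu=\mu(R,\xi')=\sqrt{R^2+|\xi'|^2}$; more precisely $(R^2-\partial_{x_n}^2+|\xi'|^2)^m$ applied in $x_n$ gives $\delta_0(x_n)$, so $\widehat K$ is the one-dimensional fundamental solution of $(\mu^2-\partial_{x_n}^2)^m$, an explicit combination of $x_n^a e^{-\mu|x_n|}$ for $0\le a\le m-1$, with coefficients that are rational in $\mu$. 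Then $\mathcal D^{n-l}_{R,y}$ acting in the transversal $y_n$ variable, evaluated at $y_n=0$, and $\mathcal D^j_{R,x}$ in the $x_n$ variable evaluated at the exterior limit $x_n\to 0^+$, produce an explicit scalar function of $\mu$ times a power: this gives the principal symbol $c_{j,l}\mu^{j-l}=c_{j,l}(R^2+|\xi'|^2)^{(j-l)/2}$ with $c_{j,l}$ a universal constant. For part a.) the matrix $(c_{j,l})$ is read off from this flat-space computation; I would note that its independence of $X$ is automatic because in geodesic boundary coordinates the leading part of the metric is Euclidean and the leading part of $|x-y|^2$ is $|x'-y'|^2+(x_n-y_n)^2$.

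For part b.), the subprincipal term comes from the first-order correction to $|x-y|^2$ in boundary normal coordinates, which is governed by the second fundamental form $L$ of $\partial X$; after taking the fibrewise trace one gets the mean curvature $H$. I would expand $|x-y|^2 = |x'-y'|^2 + (x_n-y_n)^2 - (x_n+y_n)L(x'-y',x'-y') + \cdots$ (up to the relevant order, with the sign fixed by the exterior convention), together with the expansion of the surface measure $\rd S(y)$ and of the metric; substituting into $\mathrm{e}^{-R|x-y|}$ and re-expanding produces, in the symbol, a term linear in $H(x')$ of one order lower, namely $c_{j,l,-1}H(x')\mu^{j-l-1}$. Again the universality of $c_{j,l,-1}$ follows since the whole computation is a formal manipulation of the flat-space kernel with the single geometric input $L$, and only its trace $H$ survives after integrating out the angular variables in the tangential Fourier integral (odd moments vanish; the relevant even second moment produces $\operatorname{tr} L = (n-1)H$).

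The main obstacle I anticipate is \emph{not} the leading symbol but the careful justification that the exterior limiting procedure defining $A_{j,l}(R)$ matches the jump relations and that, after localization, the off-diagonal remainder is genuinely a parameter-dependent smoothing operator uniformly for $R\in\Gamma$ — i.e. controlling the boundary layer operators in the parameter-dependent calculus rather than just as ordinary $\Psi$DOs. This requires the exponential off-diagonal decay of $K(R;\cdot)$ for $R\in\C_+$ (already used in Lemma \ref{exteningtoomega}) together with a parameter-dependent version of the standard estimates; the bookkeeping of which transversal derivatives land on which variable before the boundary restriction, and keeping track of the exterior (rather than interior) limit in the jump relations, is where sign and combinatorial errors are easiest to make, so I would set up the one-dimensional model $(\mu^2-\partial_t^2)^m$ problem on the half-line once and for all and derive every $K_{j,l}$ from it.
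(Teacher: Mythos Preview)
Your plan is sound and would yield the theorem, but it is organised differently from the paper's proof. The paper does \emph{not} use boundary normal (geodesic) coordinates; instead it works in graph coordinates $(x',S(x'))$ over the tangent plane at a boundary point, with $S(0)=0$, $\nabla S(0)=0$, and $(n-1)H(0)=\Delta S(0)$. In these coordinates the ambient Laplacian stays Euclidean, so the full $n$-dimensional Fourier transform of $K$ is exactly $(R^2+|\zeta|^2)^{-m}$; integrating out the normal Fourier variable $w$ produces an explicit \emph{amplitude} $a_{jk}(x',y',\xi',R)$ built from the elementary integrals $b_{r,N}(u,z)=\int w^r e^{izw}(u+w^2)^{-N}\,\rd w$ with $u=R^2+|\xi'|^2$ and $z=S(x')-S(y')$. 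The principal symbol is then obtained by setting $x'=y'$ (so $z=0$), and the term of order $j-k-1$ by the standard amplitude-to-symbol formula $\sigma_{j-k-1}=-i\sum_l \partial_{\xi_l}\partial_{y_l} a_{jk}|_{y'=x'}$, which picks up $\sum_l \partial_l^2 S(0)=(n-1)H(0)$.

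Your route instead straightens the boundary and curves the ambient distance: you would expand $|x-y|^2$ in boundary normal coordinates so that the second fundamental form $L$ appears in the first correction, and then extract $\operatorname{tr}L=(n-1)H$ by parity of the tangential Fourier integral. Both schemes reduce to the same one-dimensional model $(\mu^2-\partial_t^2)^m$ with $\mu=\sqrt{R^2+|\xi'|^2}$ for the leading order; the difference is where the curvature is stored (in $S$ versus in the distance expansion) and how it is read off (amplitude-to-symbol conversion versus Taylor expansion of the kernel). The paper's choice keeps the normal derivatives $\mathcal{D}^j_R$ aligned with a fixed Euclidean axis, so the bookkeeping of which $\partial_\nu$ hits which variable becomes the purely algebraic case split on the parities of $j$ and $k$; your approach is more geometric but requires you to also expand the surface measure $\rd S$ and the normal vector field in the chosen coordinates, which is extra work you correctly flag. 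Either way the parameter-dependent smoothing of the off-diagonal part follows from the exponential decay of $e^{-R|z|}$ for $R\in\Gamma\subset\C_+$, exactly as you say.
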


In the statement of Theorem  \ref{theboundaryops} we use the notation $(x',\xi')$ for coordinates on the cotangent bundle $T^*\partial X$ to distinguish from the Cartesian coordinates on the ambient Euclidean space $\R^n$. The proof of Theorem \ref{theboundaryops} is of a computational nature and may be found in Subsection \ref{theorem9} of the appendix. We define the Hilbert space 
$$\mathcal{H}:=\underbrace{\bigoplus_{j=0}^{m-1}H^{2m-j-1/2}(\partial X)}_{\mathcal{H}_+}\oplus \underbrace{\bigoplus_{j=m}^{n}H^{2m-j-1/2}(\partial X)}_{\mathcal{H}_-}.$$
 By Theorem \ref{theboundaryops} the $2m\times 2m$-matrix of operators $\mathbb{A}:=(A_{jl})_{j,l=0}^n$ can be considered as an operator $\mathbb{A}:\mathcal{H}\to \mathcal{H}$. This operator decomposes into a matrix of operators
$$\mathbb{A}=\begin{pmatrix} \mathbb{A}_{++}& \mathbb{A}_{+-}\\ \mathbb{A}_{-+}& \mathbb{A}_{--}\end{pmatrix}:\begin{matrix} \mathcal{H}_+\\\oplus\\\mathcal{H}_-\end{matrix}\longrightarrow \begin{matrix} \mathcal{H}_+\\\oplus\\\mathcal{H}_-\end{matrix}.$$
Here $\mathbb{A}_{pq}:\mathcal{H}_q\to \mathcal{H}_p$ for $p,q\in \{+,-\}$.

We note here that the entries $A_{jl}$ of $\mathbb{A}$ are of order $j-l$, and hence depend on their position in the matrix. The appropriate notion of an elliptic operator (with parameter) for such mixed-order  systems goes back to Agmon-Douglis-Nirenberg \cite{agmon, grubb77}, see also \cite[Chapter XIX.5]{hormanderIII}. The operator $\mathbb{A}$ is called the Calder\'on projector of the Barcel\'o-Carbery boundary value problem \eqref{thebvpgener}. It allows to reduce problem \eqref{thebvpgener} to the boundary $\partial X$. In Lemma \ref{caldprojisproj} we show that $\mathbb{A}$ is a projection.

\begin{deef}
For $R\in \Gamma$, define the operators 
\begin{align*}
T:\mathcal{H}&\to H^{2m}(\Omega), \quad v=(v_j)_{j=0}^n\mapsto \sum_{l=0}^n \mathcal{A}_l(v_l),\\
r: H^{2m}(\Omega)&\to \mathcal{H}, \quad u\mapsto (\mathcal{D}^j_Ru)_{j=0}^n.
\end{align*}
We define the space of Cauchy data of the Barcel\'o-Carbery boundary value problem \eqref{thebvpgener} as $\mathcal{C}_{BC}(R):=r\ker(R^2-\Delta)^m\subseteq \mathcal{H}$.
\end{deef}

The operator $T$ is well defined by Lemma \ref{exteningtoomega}, and $r$ is well defined by the trace theorem.

\begin{lem}
\label{caldprojisproj}
For any $R\in \Gamma$, the operator $\mathbb{A}$ is a projection onto the space of Cauchy data $\mathcal{C}_{BC}(R)$. In fact, $\mathbb{A}=r\circ T$, and $T$ is a left inverse of $r$ on the closed subspace $\ker(R^2-\Delta)^m\subseteq H^{2m}(\Omega)$. 
\end{lem}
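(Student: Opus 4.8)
The plan is to prove Lemma~\ref{caldprojisproj} by verifying the three assertions in turn: first the identity $\mathbb{A} = r \circ T$, then that $T$ is a left inverse of $r$ on $\ker(R^2-\Delta)^m$, and finally combining these to conclude that $\mathbb{A}$ is the claimed projection. First I would unwind the definitions: for $v = (v_l)_{l=0}^n \in \mathcal{H}$, the function $Tv = \sum_l \mathcal{A}_l(v_l)$ lies in $H^{2m}(\Omega)$ by Lemma~\ref{exteningtoomega} and solves $(R^2-\Delta)^m(Tv) = 0$ in $\Omega$; applying $r$ gives $r(Tv) = (\mathcal{D}^j_R T v)_{j=0}^n$, and the $j$-th component is $\sum_l \mathcal{D}^j_R \mathcal{A}_l(v_l) = \sum_l A_{j,l}(v_l)$ by the definition of the operators $A_{j,l}(R)$ via the kernels $K_{j,l}(R;x,y) = \mathcal{D}^j_{R,x} K_l(R;x,y)$ (once one checks that applying $\mathcal{D}^j_{R,x}$ commutes with the boundary integral against $K_l$, using the exterior-limit definition of $A_{j,l}$). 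This is exactly $(\mathbb{A}v)_j$, so $r\circ T = \mathbb{A}$.

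Next I would establish $T\circ r = \mathrm{id}$ on $\ker(R^2-\Delta)^m \subseteq H^{2m}(\Omega)$: given such a $u$, by definition $r u = (\mathcal{D}^j_R u)_{j=0}^n$, so $T(ru) = \sum_{l=0}^n \mathcal{A}_l(R)(\mathcal{D}^l_R u)$, which equals $u$ precisely by Proposition~\ref{greensandal} (Green's representation formula). Here one should note that $\ker(R^2-\Delta)^m$ is a closed subspace of $H^{2m}(\Omega)$ — it is the kernel of the bounded operator $(R^2-\Delta)^m : H^{2m}(\Omega) \to L^2(\Omega)$ — so the phrasing in the statement is justified. With these two facts in hand, $\mathbb{A}^2 = (rT)(rT) = r(Tr)T = r\,\mathrm{id}_{\ker}\,T$; since $\mathrm{ran}(T) \subseteq \ker(R^2-\Delta)^m$ by Lemma~\ref{exteningtoomega}, the inner $Tr$ acts on the range of $T$ where it is the identity, giving $\mathbb{A}^2 = rT = \mathbb{A}$, so $\mathbb{A}$ is idempotent. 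Its range is $\mathrm{ran}(r\circ T)$; since $T$ is surjective onto $\mathrm{ran}(T)$ and $r$ restricted there is injective with $r(\mathrm{ran}(T)) = r(\ker(R^2-\Delta)^m) = \mathcal{C}_{BC}(R)$ (using that every $u \in \ker(R^2-\Delta)^m$ is $T(ru)$, hence in $\mathrm{ran}(T)$), the range of $\mathbb{A}$ is exactly $\mathcal{C}_{BC}(R)$.

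The main obstacle I anticipate is the bookkeeping in the first step: one must be careful that $\mathcal{D}^j_{R,x}$ (a boundary trace of a differential operator applied in the exterior variable $x$) genuinely passes through the singular boundary integral defining $\mathcal{A}_l(R)f(x)$ for $x \in \Omega$, and that the resulting boundary limit $x \to \partial X$ reproduces the exterior-limit kernel $K_{j,l}(R;x,y)$ rather than something differing by a jump term. For $x$ in the open exterior domain this is just differentiation under the integral sign (the kernel $K_l(R;x,y)$ is smooth in $x$ away from $\partial X$ and decays exponentially, as used in Lemma~\ref{exteningtoomega}); the delicate point is the passage to the boundary, which is precisely why $A_{j,l}(R)$ was \emph{defined} through the exterior limiting procedure in Subsection~\ref{blaypot}, so consistency holds by construction. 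A secondary point worth a sentence is that all the identities, initially verified on the dense subspace $C^\infty(\partial X)$ (resp.\ on smooth solutions), extend to $\mathcal{H}$ and to $H^{2m}(\Omega)$ by the continuity statements of Theorem~\ref{theboundaryops}, Lemma~\ref{exteningtoomega}, and the trace theorem, together with elliptic regularity to approximate a general $u \in H^{2m}(\Omega)$ with $(R^2-\Delta)^m u = 0$ by smooth solutions.
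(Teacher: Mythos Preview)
Your proposal is correct and follows essentially the same approach as the paper's proof: both hinge on the identity $\mathbb{A}=r\circ T$ (which you unwind from the definitions, while the paper treats it as immediate), the fact that $\im(T)\subseteq \ker(R^2-\Delta)^m$, and Proposition~\ref{greensandal} to obtain $T\circ r=\mathrm{id}$ on that kernel. The paper's version is terser, leaving the verification of $\mathbb{A}=r\circ T$ and the range computation implicit; your additional care about the exterior-limit consistency and density is not strictly needed but does no harm.
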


\begin{proof}
Since the image of $T$ satisfies $\im(T)\subseteq \ker(R^2-\Delta)^m\subseteq H^{2m}(\Omega)$, $\mathbb{A}$ is a projection if $\mathbb{A}=r\circ T$ and $T$ is the left inverse of $r$ on the closed subspace $\ker(R^2-\Delta)^m\subseteq H^{2m}(\Omega)$. The identity $T\circ r|_{\ker(R^2-\Delta)^m}=\mathrm{id}_{\ker(R^2-\Delta)^m}$ follows from Proposition \ref{greensandal}.
\end{proof}

\begin{lem}
\label{calderprojlemacomp}
For any $R\in \C\setminus\{0\}$, the entries in the matrix $\mathbb{A}$ of operators, $\mathbb{A}_{++}$, $ \mathbb{A}_{+-}$, $\mathbb{A}_{-+}$ and $\mathbb{A}_{--}$, are elliptic pseudo-differential operators. Moreover when restricting the parameter to the range $R\in \Gamma$, the operators $\mathbb{A}_{++}$, $ \mathbb{A}_{+-}$, $\mathbb{A}_{-+}$ and $\mathbb{A}_{--}$ define elliptic pseudo-differential operators with parameter $R\in \Gamma$.
\end{lem}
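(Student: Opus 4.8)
The plan is to reduce the statement to a finite-dimensional, $X$-independent fact about the constant matrix $(c_{jl})_{j,l=0}^n$ of Theorem \ref{theboundaryops}, using the Agmon--Douglis--Nirenberg notion of ellipticity for mixed-order systems. By that theorem $A_{jl}\in\Psi^{j-l}(\partial X;\Gamma)$, so with the row/column weights $\sigma_j:=j$ and $\tau_l:=-l$ one has $\mathrm{ord}(A_{jl})\le\sigma_j+\tau_l$, and $\mathbb{A}$ together with each of its four $m\times m$ blocks $\mathbb{A}_{++},\mathbb{A}_{+-},\mathbb{A}_{-+},\mathbb{A}_{--}$ is a $\Psi$DO with parameter $R\in\Gamma$ of this mixed-order type. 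The ADN principal symbol of the block $(A_{jl})_{j\in J,l\in L}$ is the matrix $\big(\sigma_{\sigma_j+\tau_l,R}(A_{jl})\big)_{j\in J,l\in L}$, and (parameter-)ellipticity means this matrix is invertible on the relevant cone. By Theorem \ref{theboundaryops}a, writing $\rho:=(R^2+|\xi'|^2)^{1/2}$ — well defined by the principal branch and nonzero since $R\in\Gamma\subset\C_+$ forces $R^2+|\xi'|^2\notin(-\infty,0]$ — this symbol matrix equals $(c_{jl}\rho^{\,j-l})_{j\in J,l\in L}=\mathrm{diag}(\rho^{j})_{j\in J}\,C_{JL}\,\mathrm{diag}(\rho^{-l})_{l\in L}$ with $C_{JL}:=(c_{jl})_{j\in J,l\in L}$. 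Conjugating away the diagonal factors, $\mathbb{A}_{++},\mathbb{A}_{+-},\mathbb{A}_{-+},\mathbb{A}_{--}$ are elliptic, and elliptic with parameter $R\in\Gamma$, precisely when the corresponding sub-matrices $C_{++},C_{+-},C_{-+},C_{--}$ of $(c_{jl})_{j,l=0}^n$ are invertible; since this condition no longer depends on $(x',R,\xi')$, the two ellipticity statements hold together, and the non-parametric one holds for every $R\in\C\setminus\{0\}$ (the principal symbol stays invertible as $|\xi'|\to\infty$ for any fixed $R$).

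It remains to prove that these four $m\times m$ matrices are invertible; by Theorem \ref{theboundaryops} they depend only on $n$, so any convenient model suffices. I would argue this in two compatible ways. Concretely, in the flat half-space $\R^n_+$ — the model used to compute symbols in the proof of Theorem \ref{theboundaryops} — the tangential Fourier transform turns the layer potentials into explicit constant matrices, reducing the claim to the nonvanishing of finitely many determinants. Conceptually, Lemma \ref{caldprojisproj} says $\mathbb{A}$ is a projection, so its principal symbol, hence $(c_{jl})$, is idempotent, with range the symbol of the Cauchy-data space $\mathcal{C}_{BC}(R)$ and kernel the symbol of the complementary, interior Cauchy data; unique solvability of the Dirichlet-type Barcel\'o--Carbery problem (Proposition \ref{uniquesolu}) — a condition visible already at the model symbol — says exactly that the range of $(c_{jl})$ is a graph over the $\mathcal{H}_+$-summand, and the analogous well-posedness of the interior problem and of the ``Neumann-type'' problem (prescribing $\mathcal{H}_-$-data) supply the remaining transversality of this graph and of its complement to the $\C^m$-summands. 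Writing $\mathrm{range}(c_{jl})=\{(x,Sx)\}$ and $\ker(c_{jl})=\{(Ty,y)\}$ inside $\C^m\oplus\C^m$, a short linear-algebra computation with $S$, $T$ and $I-ST$ then yields invertibility of all four $C_{\pm\pm}$.

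The main obstacle is precisely this last point: controlling the four constant sub-matrices. The reduction in the first paragraph is routine once Theorem \ref{theboundaryops} is available; what needs care is either carrying out the half-space determinant computation, or verifying that the interior Dirichlet problem and the ``Neumann-type'' exterior problem are as well-posed as in Proposition \ref{uniquesolu} and translating this into the required transversality statements. I would present the proof via the half-space model for definiteness, using the idempotency of $(c_{jl})$ and Proposition \ref{uniquesolu} as a consistency check, and conclude that $\mathbb{A}_{++},\mathbb{A}_{+-},\mathbb{A}_{-+},\mathbb{A}_{--}$ are elliptic, and elliptic with parameter on $\Gamma$.
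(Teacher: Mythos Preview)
Your reduction in the first paragraph is exactly the paper's: Theorem \ref{theboundaryops} gives $\sigma_{j-l,R}(A_{jl})=c_{jl}\rho^{\,j-l}$, and after stripping the diagonal factors in $\rho$, ellipticity (with or without parameter, for one $R$ or all $R$) becomes the $X$-independent question of whether the four $m\times m$ constant blocks $C_{++},C_{+-},C_{-+},C_{--}$ are invertible.

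Where you differ is in how you discharge this last, purely finite-dimensional, step. The paper does not compute: it observes that the principal symbol sees only the local geometry and therefore reduces to the compact-domain setting of Grubb \cite{Grubb71}, then cites \cite[Section 6]{Grubb71} (and \cite[Chapter 11]{GrubbDistOp}) for the ellipticity of the four blocks of the Calder\'on projector. Your two proposed routes are both viable alternatives. The half-space computation is precisely what underlies the cited results, so carrying it out would make the argument self-contained at the cost of some explicit linear algebra with the functions $b_{r,N}$ from \eqref{brncomp}. Your conceptual route via idempotency of $(c_{jl})$ and graph/transversality statements is more elegant and explains \emph{why} the blocks are invertible; note, however, that it genuinely needs four well-posedness inputs at the symbol level (exterior and interior, Dirichlet-type and Neumann-type), of which only the exterior Dirichlet case is Proposition \ref{uniquesolu}. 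The remaining three are standard and provable by the same variational/coercivity argument, but you should either state and prove them or, as the paper does, appeal to \cite{Grubb71}. Either way, your plan leads to a correct proof once that step is executed.
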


\begin{proof}
The symbol computation of Theorem \ref{theboundaryops} shows that $\mathbb{A}_{++}$, $ \mathbb{A}_{+-}$, $\mathbb{A}_{-+}$ and $\mathbb{A}_{--}$, are elliptic pseudo-differential operators for some $R$, if and only if they are elliptic for all $R$, if and only if the operators $\mathbb{A}_{++}$, $ \mathbb{A}_{+-}$, $\mathbb{A}_{-+}$ and $\mathbb{A}_{--}$ define elliptic pseudo-differential operators with parameter $R\in \Gamma$. In fact, the principal symbols of $\mathbb{A}_{++}$, $ \mathbb{A}_{+-}$, $\mathbb{A}_{-+}$ and $\mathbb{A}_{--}$ only depend on the geometry of $\partial X$ and its embedding into a compact neighborhood in $\R^n$. As such, the lemma is reduced to the situation of \cite{Grubb71}. By \cite[Section 6]{Grubb71} (see also \cite[Chapter 11]{GrubbDistOp}), the operators $\mathbb{A}_{++}$, $ \mathbb{A}_{+-}$, $\mathbb{A}_{-+}$ and $\mathbb{A}_{--}$, are elliptic pseudo-differential operators as they arise from a boundary value problem which is elliptic in the Lopatinskii-Shapiro sense.
\end{proof}

\begin{cor}
\label{invertingaplumi}
{ For any sector $\Gamma\subseteq \C_+$, there is a $C_\Gamma\geq 0$ such that when $R\in \Gamma$ satisfies $\mathrm{Re}(R)\geq C_\Gamma$}, the operators $\mathbb{A}_{++}$, $ \mathbb{A}_{+-}$, $\mathbb{A}_{-+}$ and $\mathbb{A}_{--}$ are invertible.
\end{cor}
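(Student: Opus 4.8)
The plan is to deduce invertibility from parameter-dependent ellipticity via the standard Neumann-series argument: by Lemma \ref{calderprojlemacomp}, each of $\mathbb{A}_{++},\mathbb{A}_{+-},\mathbb{A}_{-+},\mathbb{A}_{--}$ is an elliptic pseudo-differential operator with parameter $R\in\Gamma$ in the mixed-order Agmon--Douglis--Nirenberg sense, acting on the graded spaces $\mathcal{H}_\pm$, and a parameter-elliptic operator becomes invertible once $|R|$ is large. The point of the corollary is just to phrase ``$|R|$ large'' in terms of $\mathrm{Re}(R)$, which is legitimate since $\Gamma\subseteq\C_+$.

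First I would recall from the calculus of pseudo-differential operators with parameter (see \cite{grubb77, GrubbDistOp} and Appendix \ref{briefintropara}; for the ADN-graded setting also \cite{agmon, Grubb71}) that a parameter-elliptic operator $P\in\Psi^{d}(\partial X;\Gamma)$ admits a parametrix $Q\in\Psi^{-d}(\partial X;\Gamma)$ with $QP-I$ and $PQ-I$ in the parameter-dependent smoothing class $\Psi^{-\infty}(\partial X;\Gamma)$; for the mixed-order systems $\mathbb{A}_{\pm\pm}$ one takes $d$ to be the order matrix and works with the correspondingly graded Sobolev spaces $\mathcal{H}_\pm$. Second, I would use that operators in $\Psi^{-\infty}(\partial X;\Gamma)$, regarded as operators between the fixed Sobolev spaces underlying $\mathcal{H}_\pm$, have operator norm $\mathcal{O}(|R|^{-N})$ for every $N$ as $|R|\to\infty$ in $\Gamma$ --- this is immediate from the definition of the parameter-dependent symbol classes, the polynomial discrepancy between $R$-dependent and $R$-independent norms being absorbed by the rapid decay. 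Applying this to $QP-I$ and $PQ-I$ yields a constant $C_\Gamma\geq 0$ with $\|QP-I\|,\|PQ-I\|\leq\tfrac12$ whenever $R\in\Gamma$ and $|R|\geq C_\Gamma$; then $PQ$ and $QP$ are invertible by Neumann series, so $P$ is invertible with two-sided inverse $Q(PQ)^{-1}=(QP)^{-1}Q$.

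Finally, since $\Gamma\subseteq\C_+$, every $R\in\Gamma$ with $\mathrm{Re}(R)\geq C_\Gamma$ satisfies $|R|\geq\mathrm{Re}(R)\geq C_\Gamma$, so the previous step applies to each of $\mathbb{A}_{++},\mathbb{A}_{+-},\mathbb{A}_{-+},\mathbb{A}_{--}$; enlarging $C_\Gamma$ so as to cover all four operators simultaneously gives the statement.

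I expect the only step needing real care to be the bookkeeping for the mixed-order structure: one must check that ``parameter-elliptic'' in the ADN sense genuinely produces a parametrix with smoothing-with-parameter remainder relative to the graded spaces $\mathcal{H}_\pm$, and that the norm decay survives the grading. This is classical (\cite{agmon, grubb77}, \cite[Ch.~11]{GrubbDistOp}), so no new difficulty arises; the remainder of the argument is the routine Neumann-series estimate.
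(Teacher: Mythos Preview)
Your proposal is correct and follows essentially the same route as the paper: invoke Lemma~\ref{calderprojlemacomp} to obtain parameter-ellipticity, take a parametrix with smoothing-with-parameter remainders, use that these remainders decay in norm as the parameter grows, and conclude by a Neumann series. The paper's proof is slightly terser (it appeals directly to $\Psi^{-\infty}(M;\Gamma)=\mathcal{S}(\Gamma;C^\infty(M\times M))$ for the norm smallness and does not spell out the ADN bookkeeping), but the argument is the same.
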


\begin{proof}
If $P$ is an elliptic pseudo-differential operator with parameter $R\in \Gamma$, it admits a parametrix $Q$, which is an elliptic pseudo-differential operator with parameter $R\in \Gamma$, such that $1-PQ$ and $1-QP$ are smoothing pseudo-differential operators with parameter $R\in \Gamma$. By taking $\mathrm{Re}(R)$ large enough, the smoothing operators $1-PQ$ and $1-QP$ can be made arbitrarily small in operator norm since their kernels are in $\Psi^{-\infty}(M;\Gamma)=\mathcal{S}(\Gamma;C^\infty(M\times M))$. A Neumann series argument shows that $P$ has a left and a right inverse. Now the corollary follows readily from Lemma \ref{calderprojlemacomp} by setting $P=\mathbb{A}_{++}$, $ \mathbb{A}_{+-}$, $\mathbb{A}_{-+}$ and $\mathbb{A}_{--}$.
\end{proof}

\subsection{The magnitude and the Dirichlet-to-Neumann operator}

When computing the magnitude of a domain in $\R^n$, by Proposition \ref{compformag} we need to compute the ``Neumann part" of the boundary data $(\mathcal{D}^j_Rh_R)_{j=m}^n\in \mathcal{H}_-$ from the given ``Dirichlet part" of the boundary data $(\mathcal{D}^j_Rh_R)_{j=0}^{m-1}\in \mathcal{H}_+$. It follows from Proposition \ref{uniquesolu} that $(\mathcal{D}^j_Rh_R)_{j=m}^n\in \mathcal{H}_-$ is uniquely determined by $(\mathcal{D}^j_Rh_R)_{j=0}^{m-1}\in \mathcal{H}_+$ for $R\in \Gamma_{\frac{\pi}{n+1}}$. This map is implemented by the standard construction of a Dirichlet-to-Neumann operator, and we later continue this operator meromorphically to $R\in \C$. 

\begin{deef} 
Define $\Lambda(R):\mathcal{H}_+\to \mathcal{H}_-$ by setting 
$$\Lambda(R)(u_j)_{j=0}^{m-1}:=(\mathcal{D}^j_Ru)_{j=m}^n,$$
where $u\in H^{2m}(\Omega)$ solves Equation \eqref{thebvpgener}.
\end{deef}

\begin{thm}
\label{lambdaisps}
Let $\Gamma\subseteq \Gamma_{\frac{\pi}{n+1}}$ be a sector. The operator $\Lambda=(\Lambda_{j+m,l})_{j,l=0}^{m-1}$ is a pseudo-differential operator with parameter $R\in \Gamma$. In fact, $\Lambda_{j+m,l}\in \Psi^{j+m-l}(\partial X;\Gamma)$. Moreover, for $\mathrm{Re}(R)>0$ large enough, $\Lambda(R)=\mathbb{A}_{+-}(R)^{-1}(1-\mathbb{A}_{++}(R))$.
\end{thm}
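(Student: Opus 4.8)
The plan is to derive the formula $\Lambda(R) = \mathbb{A}_{+-}(R)^{-1}(1 - \mathbb{A}_{++}(R))$ from the fact that $\mathbb{A}$ is the projection onto the Cauchy data space $\mathcal{C}_{BC}(R)$ (Lemma \ref{caldprojisproj}), and then read off the pseudo-differential and parameter-dependent mapping properties from Theorem \ref{theboundaryops} together with the invertibility of $\mathbb{A}_{+-}$ from Corollary \ref{invertingaplumi}. First I would fix $(u_j)_{j=0}^{m-1} \in \mathcal{H}_+$, let $u \in H^{2m}(\Omega)$ be the unique solution of \eqref{thebvpgener} provided by Proposition \ref{uniquesolu}, and set $v := r(u) = (\mathcal{D}^j_R u)_{j=0}^n \in \mathcal{H}$, so that $v_+ = (u_j)_{j=0}^{m-1}$ and $v_- = \Lambda(R) v_+$ by definition. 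Since $u \in \ker(R^2-\Delta)^m$, Lemma \ref{caldprojisproj} gives $\mathbb{A} v = v$; writing this out block-wise yields the two equations
\begin{align*}
\mathbb{A}_{++} v_+ + \mathbb{A}_{+-} v_- &= v_+,\\
\mathbb{A}_{-+} v_+ + \mathbb{A}_{--} v_- &= v_-.
\end{align*}
The first equation rearranges to $\mathbb{A}_{+-} v_- = (1 - \mathbb{A}_{++}) v_+$, and by Corollary \ref{invertingaplumi} the operator $\mathbb{A}_{+-}$ is invertible once $\mathrm{Re}(R)$ is large enough in the sector $\Gamma$, whence $v_- = \mathbb{A}_{+-}(R)^{-1}(1 - \mathbb{A}_{++}(R)) v_+$. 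As $v_+$ was arbitrary, this establishes $\Lambda(R) = \mathbb{A}_{+-}(R)^{-1}(1 - \mathbb{A}_{++}(R))$ on the stated range of $R$.

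Having the closed formula, the pseudo-differential statement follows from the calculus: by Theorem \ref{theboundaryops}, each entry $A_{jl}$ lies in $\Psi^{j-l}(\partial X;\Gamma)$, so $\mathbb{A}_{++}$ and $\mathbb{A}_{+-}$ are matrices of parameter-dependent pseudo-differential operators of the appropriate (mixed) orders. By Lemma \ref{calderprojlemacomp} the block $\mathbb{A}_{+-}$ is elliptic as an Agmon--Douglis--Nirenberg system with parameter, hence its inverse (which exists for $\mathrm{Re}(R)$ large by Corollary \ref{invertingaplumi}) is again a parameter-dependent pseudo-differential operator with the negated orders; composing with $1 - \mathbb{A}_{++}$ and keeping track of the ADN order bookkeeping gives $\Lambda_{j,l+m} \in \Psi^{j+m-l}(\partial X;\Gamma)$. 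One small point to check is that the parametrix of the mixed-order block composes correctly order-by-order, i.e.\ that the order conventions for $\mathbb{A}_{+-}$ as an operator $\mathcal{H}_- \to \mathcal{H}_+$ and for $\mathbb{A}_{++}$ as $\mathcal{H}_+ \to \mathcal{H}_+$ are compatible so that the product lands in $\Psi^{j+m-l}$; this is routine given the explicit indexing $H^{2m-j-1/2}$ used to define $\mathcal{H}_\pm$.

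The main obstacle is not any single computation but rather making the mixed-order (ADN) pseudo-differential bookkeeping rigorous: one must verify that the notion of "parameter-dependent pseudo-differential operator" used for the scalar entries in Theorem \ref{theboundaryops} is closed under the operations of inversion of an elliptic ADN block and block matrix composition, with the orders transforming as claimed. This is where the reference to \cite{grubb77, Grubb71} and \cite[Chapter 11]{GrubbDistOp} (and the appendix of this paper) does the real work; in the write-up I would simply invoke that the parametrix $Q$ of $\mathbb{A}_{+-}$ constructed in Corollary \ref{invertingaplumi} is parameter-dependent with the order structure dual to that of $\mathbb{A}_{+-}$, so that the true inverse differs from $Q$ by a parameter-dependent smoothing operator and $\Lambda(R) = Q(R)(1-\mathbb{A}_{++}(R)) + (\text{smoothing})$ has the asserted orders. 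The analytic content --- unique solvability, $\mathbb{A} = r \circ T$ being a projection, and invertibility of $\mathbb{A}_{+-}$ for large $\mathrm{Re}(R)$ --- is already supplied by the preceding results, so the proof is essentially a matter of assembling them.
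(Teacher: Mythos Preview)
Your proposal is correct and follows essentially the same route as the paper: the paper's proof simply observes that $\mathcal{C}_{BC}(R)$ is the graph of $\Lambda(R)$, so the projection identity $\mathbb{A}v=v$ on Cauchy data yields $\mathbb{A}_{++}+\mathbb{A}_{+-}\Lambda=1$, and then invokes Corollary \ref{invertingaplumi}. Your version unpacks this graph argument element-wise and spells out the ADN order bookkeeping for the pseudo-differential claim, which the paper leaves implicit; no substantive difference.
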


\begin{proof}
By construction, the space of Cauchy data $\mathcal{C}_{BC}(R)$ is the graph of $\Lambda(R)$. By Lemma \ref{caldprojisproj}, $\mathbb{A}$ projects onto the space of Cauchy data $\mathcal{C}_{BC}(R)$. This implies the identity $\mathbb{A}_{++}(R)+\mathbb{A}_{+-}(R)\Lambda(R)=1$. The identity $\Lambda(R)=\mathbb{A}_{+-}(R)^{-1}(1-\mathbb{A}_{++}(R))$ for large $\mathrm{Re}(R)$ follows from Corollary \ref{invertingaplumi}. The Agmon-Douglis-Nirenberg \cite{agmon, grubb77} calculus for mixed order systems and ellipticity of $\mathbb{A}_{+-}$ imply that there is a matrix of parameter dependent pseudo-differential operators $\mathbb{B}(R)=(B_{j+m,k})_{j,k=0}^{m-1}$, where $B_{j+m,k}\in \Psi^{j+m-k}(\partial X;\Gamma)$, such that $\mathbb{B}(R)=\mathbb{A}_{+-}(R)^{-1}$ for large $\mathrm{Re}(R)$ and $1-\mathbb{B}(R)\mathbb{A}_{+-}(R), 1-\mathbb{A}_{+-}(R)\mathbb{B}(R) \in \Psi^{-\infty}(\partial X;\Gamma)$. As $\mathbb{A}_{+-}(R)\Lambda(R)=1-\mathbb{A}_{++}(R)$ we have that 
$$\Lambda_{j+m,l}-\sum_{k=0}^{m-1} B_{j+m,k}(\delta_{k,l}-A_{k,l})\in \Psi^{-\infty}(\partial X;\Gamma).$$
In fact, $\Lambda_{j+m,l}=\sum_{k=0}^{m-1} B_{j+m,k}(\delta_{k,l}-A_{k,l})$ for large $\mathrm{Re}(R)$. The property $\Lambda_{j+m,l}\in \Psi^{j+m-l}(\partial X;\Gamma)$ can be deduced from the fact that $B_{j+m,k}\in \Psi^{j+m-k}(\partial X;\Gamma)$ and $\delta_{k,l}-A_{k,l}\in \Psi^{k-l}(\partial X,\Gamma)$.

\end{proof}

\large
\section{Asymptotic expansion of $\mathcal{M}_X$}
\normalsize

The results of the previous section relate the magnitude function to integrals involving the parameter dependent Dirichlet-to-Neumann operator via Proposition \ref{compformag} (see page \pageref{compformag}). Using this fact, we now prove that $\mathcal{M}_X$ admits an asymptotic expansion. The key ingredient is Lemma \ref{condexplem} which shows that integrals involving a parameter dependent pseudo-differential operator admit asymptotic expansions in the semiclassical limit $R\to +\infty$.

\subsection{Existence of the asymptotic expansion}
\label{asypexp}

From the construction of the Dirichlet-to-Neumann operator, we can asymptotically compute the magnitude using Proposition \ref{compformag}. The latter proposition implies that 
\begin{equation}
\label{expandinmagindn}
\mathcal{M}_X(R)=\frac{\textnormal{vol}_n(X)}{n!\omega_n}R^n-\frac{1}{n!\omega_n}\sum_{\frac{m}{2}<j\leq m}\sum_{0\leq l<m/2} R^{n-2j+2l}\int_{\partial X}\Lambda_{2j-1,2l}(R)(1)\,\rd S.
\end{equation}
Here $1$ denotes the constant function on $\partial X$. In the next lemma we compute the integrals $\int_{\partial X}\Lambda_{2j-1,2l}(R)(1)\,\rd S$, using the calculus with parameter $R$. If $f=f(R)$ is a function such that $f(R)=\mathcal{O}(R^{-M})$ for any $M\in \N$, we write $f(R)=\mathcal{O}(R^{-\infty})$.

\begin{lem}
\label{condexplem}
Let $\Gamma$ be a sector in $\C_+$ containing $\R_+$. Suppose that $a=a(x,\xi,R)\in C^\infty(T^*M\times \Gamma)$ is a full symbol with parameter $R\in \Gamma$ of a scalar pseudo-differential operator $A$ with parameter $R\in \Gamma$ on the closed manifold $M$. Then it holds that 
$$\langle A1,1\rangle_{L^2(M)}-\int_M a(x,0,R)\mathrm{d}x=\mathcal{O}(R^{-\infty}),$$
as $\mathrm{Re}(R)\to +\infty$ in $\Gamma$. In particular, if $a\sim \sum_{j\in \N} \sigma_{m-j}(a)$ is the asymptotic expansion of $a$ into homogeneous symbols $\sigma_{m-j}(a)=\sigma_{m-j}(a)(x,\xi,R)=R^{m-j}\sigma_{m-j}(a)(x,\xi/R,1)$ of order $m-j$, then 
$$\langle A1,1\rangle_{L^2(M)}\ \sim \ \sum_{j=0}^\infty \alpha_j R^{m-j},\quad\mbox{where}\quad \alpha_j:=\int_M \sigma_{m-j}(a)(x,0,1)\rd x.$$
\end{lem}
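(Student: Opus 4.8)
The plan is to reduce the computation of $\langle A1,1\rangle_{L^2(M)}$ to the value of the full symbol at $\xi=0$, up to an error that decays faster than any power of $R$. First I would localize: choose a finite atlas $\{U_\iota\}$ of $M$ with a subordinate partition of unity $\{\phi_\iota\}$, and write $A = \sum_{\iota,\iota'}\phi_{\iota'}A\phi_\iota$. The off-diagonal terms with $\supp\phi_\iota\cap\supp\phi_{\iota'}=\emptyset$ have kernels in $\Psi^{-\infty}(M;\Gamma)=\mathcal{S}(\Gamma;C^\infty(M\times M))$ (the pseudolocality of the parameter-dependent calculus, as recalled in Corollary \ref{invertingaplumi}), so they contribute $\mathcal{O}(R^{-\infty})$ to $\langle A1,1\rangle$. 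For the diagonal terms, in a coordinate chart write $\phi_{\iota'}A\phi_\iota$ as an oscillatory integral with amplitude $a_\iota\in C^\infty(T^*U_\iota\times\Gamma)$ (the full symbol in those coordinates, cut off by the $\phi$'s), and compute $\langle \phi_{\iota'}A\phi_\iota 1,1\rangle$.

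The heart of the matter is the elementary identity
\begin{equation}
\label{planfouriercomp}
\int_{\R^d}\int_{\R^d} e^{i(x-y)\cdot\xi}\, a(x,\xi,R)\,\psi(y)\,\rd y\,\frac{\rd\xi}{(2\pi)^d} = \int_{\R^d} a(x,\xi,R)\,\hat\psi(-\xi)\,\frac{\rd\xi}{(2\pi)^d},
\end{equation}
where $\psi\in C_c^\infty$ equals $1$ on a neighborhood of $\supp\phi_\iota$, applied with $\psi$ playing the role of $\phi_\iota\cdot 1 = \phi_\iota$ (after yet another cutoff so that all functions live in one chart). Since $\hat\psi$ is Schwartz, the claim will follow if I show
\begin{equation}
\label{planmainest}
\int_{\R^d} a(x,\xi,R)\,\hat\psi(-\xi)\,\rd\xi - (2\pi)^d\, a(x,0,R)\cdot\mbox{(local contribution)} = \mathcal{O}(R^{-\infty}),
\end{equation}
uniformly in $x$. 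The mechanism is the parameter-dependent symbol estimate $|\partial_\xi^\alpha a(x,\xi,R)|\lesssim (|\xi|+|R|)^{m-|\alpha|}$: split the $\xi$-integral into $|\xi|\leq |R|/2$ and $|\xi|>|R|/2$. On the far region, $\hat\psi(-\xi)$ decays faster than any power of $|\xi|\geq |R|/2$ while $a$ grows only polynomially in $(|\xi|+|R|)$, giving $\mathcal{O}(R^{-\infty})$. On the near region, Taylor-expand $a(x,\xi,R)$ in $\xi$ about $\xi=0$; the zeroth-order term integrates against $\hat\psi$ to reproduce $a(x,0,R)$ times $\int\hat\psi$ (which is $(2\pi)^d\psi(0)$, and since $\psi\equiv 1$ near $\supp\phi_\iota$ this is exactly what is needed so that summing over $\iota$ recovers $\int_M a(x,0,R)\,\rd x$ in the $x$-integration against $\phi_\iota$), while each remainder term carries a factor $\xi^\beta\partial_\xi^\beta a$, bounded by $(|\xi|+|R|)^{m-|\beta|}|\xi|^{|\beta|}$, integrated against $\hat\psi$ — choosing $|\beta|$ large makes this $\mathcal{O}(R^{m-|\beta|})=\mathcal{O}(R^{-\infty})$, using $|\xi|^{|\beta|}|\hat\psi(-\xi)|\in L^1$.

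The second, asymptotic-expansion statement then follows formally: substituting the asymptotic expansion $a\sim\sum_j\sigma_{m-j}(a)$ into $\int_M a(x,0,R)\,\rd x$ and using that $\sigma_{m-j}(a)(x,0,R)=R^{m-j}\sigma_{m-j}(a)(x,0,1)$ by homogeneity, one reads off $\alpha_j=\int_M\sigma_{m-j}(a)(x,0,1)\,\rd x$; the tail $a-\sum_{j\le N}\sigma_{m-j}(a)$ is a parameter-dependent symbol of order $m-N-1$, whose value at $\xi=0$ is $\mathcal{O}(R^{m-N-1})$ after integration over the compact $M$, which is exactly the asymptotic-expansion bookkeeping required by the definition of $\sim$ in the introduction. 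The main obstacle, and the only place requiring care, is the uniformity in $x$ of the estimate \eqref{planmainest} and the patching of the chartwise computations: one must check that the various cutoffs ($\phi_\iota$, $\psi$, coordinate diffeomorphisms) introduce only $\mathcal{O}(R^{-\infty})$ discrepancies — this is standard since changes of variables and multiplication by cutoffs preserve the parameter-dependent calculus and since all the error kernels produced lie in $\mathcal{S}(\Gamma;C^\infty(M\times M))$, but it is the step where one actually uses that $A$ is a genuine parameter-dependent $\Psi$DO rather than merely a family.
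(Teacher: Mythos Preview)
Your argument contains a genuine gap. First, your displayed ``elementary identity'' is incorrect as stated: the right-hand side is missing the oscillatory factor $\e^{ix\cdot\xi}$, and in fact $(Op(a)\psi)(x) = \int \e^{ix\cdot\xi} a(x,\xi,R)\,\hat\psi(\xi)\,\rd\xi$. More importantly, once this factor is restored the Taylor argument does not give an $\mathcal{O}(R^{-\infty})$ bound pointwise in $x$. Expanding $a(x,\xi,R)$ to order $N$ in $\xi$ produces, besides the zeroth term and the order-$N$ remainder you estimate, intermediate terms
$$\tfrac{1}{\alpha!}\,\partial_\xi^\alpha a(x,0,R)\int \xi^\alpha \e^{ix\cdot\xi}\hat\psi(\xi)\,\rd\xi \ = \ \tfrac{1}{\alpha!}\,\partial_\xi^\alpha a(x,0,R)\, (D_x^\alpha\psi)(x), \qquad 1\leq|\alpha|<N,$$
each of size $R^{m-|\alpha|}$. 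These vanish only where $\psi\equiv 1$; in your decomposition $\sum_{\iota,\iota'}\phi_{\iota'}A\phi_\iota$ the operator acts on $\phi_\iota$, which is nowhere locally constant, so they persist. Concretely, for $A=(R^2-\Delta)^{1/2}$ one has $A\phi_\iota = R\phi_\iota - \tfrac{1}{2R}\Delta\phi_\iota + \cdots$, so $A\phi_\iota(x)-a(x,0,R)\phi_\iota(x)$ is only $O(R^{-1})$, not $O(R^{-\infty})$. You cannot ``choose $|\beta|$ large'' without first disposing of the terms with $|\beta|$ small.

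The paper's proof sidesteps this by applying the localized operator directly to the constant function $1$. After reducing to a compactly based symbol, $Op(a):C^\infty(\R^n)\to\mathcal{E}'(\R^n)$ is well defined; since $\widehat{1}=\delta_{\xi=0}$, one obtains the \emph{exact} identity $Op(a)1(x)=a(x,0,R)$ and hence $\langle Op(a)1,1\rangle=\int a(x,0,R)\,\rd x$, with the $\mathcal{O}(R^{-\infty})$ error arising solely from the chart patching. Your Taylor route can be salvaged: localize on one side, $\langle A1,1\rangle=\sum_\iota\langle A1,\phi_\iota\rangle$, then replace the \emph{inner} $1$ by a plateau $\psi_\iota\equiv 1$ on a neighbourhood of $\supp\phi_\iota$ (the error $\langle A(1-\psi_\iota),\phi_\iota\rangle$ is $\mathcal{O}(R^{-\infty})$ by pseudolocality). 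For $x\in\supp\phi_\iota$ all derivatives $D_x^\alpha\psi_\iota(x)$, $|\alpha|\geq 1$, vanish, the intermediate terms drop out, and your remainder estimate applies. As written, though, you conflate $\psi$ with $\phi_\iota$ and omit precisely this point.
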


For the proof of Lemma \ref{condexplem} we refer to Subsection \ref{lemma16} of the appendix. The following proposition shows that $\mathcal{M}_X(R)$ admits an asymptotic expansion as $\mathrm{Re}(R)\to +\infty$ in a sector $\Gamma\subseteq \C_+$. We omit the proof, as it follows immediately from Theorem \ref{lambdaisps}, Equation \eqref{expandinmagindn} and Lemma \ref{condexplem}.

\begin{prop}
\label{thecoeffi}
Let $X\subseteq \R^n$ be a smooth, compact domain, $n=2m-1$, and  $\Gamma$ a sector in $\C_+$ containing $\R_+$. Then as $\mathrm{Re}(R)\to +\infty$ in $\Gamma$, 
$$\mathcal{M}_X(R)\ \sim \ \frac{\textnormal{vol}_n(X)}{n!\omega_n}R^n+\sum_{k=1}^\infty c_k R^{n-k},$$
where 
$$c_k:=\frac{1}{n!\omega_n}\sum_{\frac{m}{2}<j\leq m}\sum_{0\leq l<m/2}\int_{\partial X}\sigma_{2j-2l-k}(\Lambda_{2j-1,2l})(x,0,1)\,\rd S.$$
\end{prop}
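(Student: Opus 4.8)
The plan is to derive Proposition \ref{thecoeffi} as a direct consequence of the three ingredients that precede it: Proposition \ref{compformag}, which expresses $\mathcal{M}_X(R)$ as a finite sum of boundary integrals of $\mathcal{D}^{2j-1}_R h_R$; Theorem \ref{lambdaisps}, which identifies the relevant boundary data with the action of the parameter-dependent pseudo-differential operator $\Lambda(R)$ on the constant function $1$; and Lemma \ref{condexplem}, which converts an integral of the form $\langle A 1, 1\rangle_{L^2(\partial X)}$ into an asymptotic series in $R$ whose coefficients are the boundary integrals of the homogeneous symbol components evaluated at $\xi = 0$.

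The first step is to unwind Proposition \ref{compformag}. The solution $h_R$ has prescribed ``Dirichlet part'' $(\mathcal{D}^j_R h_R)_{j=0}^{m-1}$, equal to $R^j$ when $j$ is even and $0$ when $j$ is odd; that is, the input vector in $\mathcal{H}_+$ has components $u_{2l} = R^{2l}\cdot 1$ for $0\le l < m/2$ and $u_{2l+1}=0$. Applying $\Lambda(R)$, the ``Neumann part'' component $\mathcal{D}^{2j-1}_R h_R$ (for $m/2 < j \le m$, so that $2j-1 \ge m$ lands in $\mathcal{H}_-$) equals $\sum_{0\le l<m/2} R^{2l}\,\Lambda_{2j-1,2l}(R)(1)$, using linearity and that only the even-indexed inputs are nonzero. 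Substituting into the formula of Proposition \ref{compformag} and collecting powers of $R$ yields precisely Equation \eqref{expandinmagindn}; this is a bookkeeping step with no analytic content, valid a priori for $R\in\Gamma_{\frac{\pi}{n+1}}$ and in particular for $R\in\R_+$ large.

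The second step is to apply Lemma \ref{condexplem} to each term $\int_{\partial X}[\Lambda_{2j-1,2l}(R)](1)\,\rd S = \langle \Lambda_{2j-1,2l}(R)1, 1\rangle_{L^2(\partial X)}$. By Theorem \ref{lambdaisps}, $\Lambda_{2j-1,2l}\in \Psi^{2j-2l-1}(\partial X;\Gamma)$ — wait, more precisely of order $(2j-1) + m - (2l+?)$; one must be careful with the index convention, but in any case it is a parameter-dependent pseudo-differential operator with a full symbol admitting an asymptotic expansion into terms homogeneous in $(R,\xi')$. Lemma \ref{condexplem} then gives $\langle \Lambda_{2j-1,2l}(R)1,1\rangle \sim \sum_{k\ge 0} \alpha_{j,l,k} R^{(\text{ord})-k}$ with $\alpha_{j,l,k} = \int_{\partial X}\sigma_{(\text{ord})-k}(\Lambda_{2j-1,2l})(x,0,1)\,\rd S$. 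Multiplying by the prefactor $R^{n-2j+2l}$, summing over the finitely many pairs $(j,l)$, and regrouping by the total power $R^{n-k}$, one reads off the claimed formula for $c_k$. To make the regrouping clean, I would fix a target exponent $n-k$ and observe that $\Lambda_{2j-1,2l}$ has order $2j-1-2l$ (this is the natural order given the trace theorem mapping properties and is consistent with the entries of $\mathbb{A}$), so the term $R^{n-2j+2l}\cdot R^{(2j-1-2l)-k'}$ contributes to exponent $n-k$ exactly when $k = k'+1$ — hence the shift producing $\sigma_{2j-2l-k}$ in the statement. Finally, one invokes the standard fact that an asymptotic expansion, if it exists, is unique, so the formula is independent of all choices (of parametrix, of local symbol representatives) made along the way, and by analytic continuation the expansion holds in any sector $\Gamma\subseteq\C_+$, not merely $\Gamma_{\frac{\pi}{n+1}}$, since the meromorphic continuation of $\mathcal{M}_X$ (part a)) is compatible with it.

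I do not anticipate a serious obstacle here: every hard analytic input has already been established (the pseudo-differential nature of $\Lambda$ with parameter, and the off-diagonal/stationary-phase estimate packaged in Lemma \ref{condexplem}). The only point requiring genuine care is the index arithmetic — tracking which operator $\Lambda_{2j-1,2l}$ has which order, making sure the summation ranges $m/2 < j \le m$ and $0\le l<m/2$ are the ones forced by the parity conditions in Proposition \ref{compformag}, and confirming that the power of $R$ that survives after multiplying the prefactor $R^{n-2j+2l}$ by the $k$-th symbol term is indeed $R^{n-k}$ with the index on $\sigma$ coming out as $2j-2l-k$. This is routine but is the step where a sign or an off-by-one error would propagate, so I would present it explicitly rather than leaving it to the reader; the authors in fact choose to omit the proof entirely, which is reasonable given that it is purely a matter of assembling the cited results.
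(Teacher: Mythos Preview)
Your proposal is correct and follows exactly the approach the paper takes: the authors explicitly omit the proof, noting it follows immediately from Theorem \ref{lambdaisps}, Equation \eqref{expandinmagindn}, and Lemma \ref{condexplem}, which is precisely the assembly you describe. Your index arithmetic is right (the order of $\Lambda_{2j-1,2l}$ is $2j-1-2l$, and the shift $k=k'+1$ correctly produces $\sigma_{2j-2l-k}$); the only minor quibble is that the extension to an arbitrary sector $\Gamma\subseteq\C_+$ comes not from meromorphic continuation but from the fact that $\Lambda=\mathbb{A}_{+-}^{-1}(1-\mathbb{A}_{++})$ remains a pseudo-differential operator with parameter in any such sector for large $\mathrm{Re}(R)$ (via Theorem \ref{theboundaryops} and Corollary \ref{invertingaplumi}), so Lemma \ref{condexplem} applies there directly.
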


\subsection{Computing the coefficients $c_1$ and $c_2$}
\label{computingcoeffsubs}

Given Willerton's expansion $$n! \mathcal{M}_{B_n}(R) = R^n + \frac{n (n+1)}{2} R^{n-1} + \frac{n (n+1)^2 (n-1)}{8} R^{n-2} + \mathcal{O}(R^{n-3})$$ for  the unit ball in odd dimension $n$ \cite{will}, it suffices to determine $c_1$ and $c_2$ up to prefactors which only depend on the dimension.

\begin{prop}
There is a universal number $\beta_n$ only depending on the dimension $n$ such that 
$$c_1=\beta_n\mathrm{vol}_{n-1}(\partial X).$$
\end{prop}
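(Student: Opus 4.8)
The plan is to extract $c_1$ from the formula in Proposition \ref{thecoeffi}, which expresses $c_1$ as an integral over $\partial X$ of the subprincipal symbols $\sigma_{2j-2l-1}(\Lambda_{2j-1,2l})(x',0,1)$, summed over the relevant index range. The key point is that, by Theorem \ref{lambdaisps}, $\Lambda$ is a pseudo-differential operator with parameter whose symbol is built algebraically from the symbol of $\mathbb{A}$ via the identity $\Lambda = \mathbb{A}_{+-}^{-1}(1-\mathbb{A}_{++})$, and by Theorem \ref{theboundaryops}a)--b) the principal and subprincipal symbols of the entries $A_{j,l}$ are universal constants $c_{j,l}$, $c_{j,l,-1}$ (depending only on $n$) times the appropriate power of $(R^2+|\xi'|^2)$, with $H(x')$ appearing linearly at subprincipal order. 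So first I would carry out the symbolic calculus: compose the asymptotic symbol expansions to obtain the principal symbol $\sigma_{\mathrm{princ}}(\Lambda_{2j-1,2l})$ as a universal constant times $(R^2+|\xi'|^2)^{(2j-2l-1)/2}$, and then the next symbol $\sigma_{2j-2l-1}(\Lambda_{2j-1,2l})$ as a universal constant times $H(x')(R^2+|\xi'|^2)^{(2j-2l-2)/2}$ — note that at this order the only geometric quantity that can enter, given Theorem \ref{theboundaryops}b), is the mean curvature $H$, since the correction terms in the composition formula involve $\partial_{\xi'}$ of principal symbols (which are again functions of $R^2+|\xi'|^2$ only) paired with $\partial_{x'}$ of principal symbols (which vanish, the principal symbols being $x'$-independent), so the only $x'$-dependence is inherited from the subprincipal terms of the factors.

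Next I would evaluate at $\xi' = 0$, $R = 1$: each $\sigma_{2j-2l-1}(\Lambda_{2j-1,2l})(x',0,1) = \gamma_{j,l}\, H(x')$ for universal constants $\gamma_{j,l}$ depending only on $n$. Summing over $\tfrac m2 < j \le m$ and $0 \le l < \tfrac m2$ and integrating gives $c_1 = \left(\tfrac{1}{n!\omega_n}\sum_{j,l}\gamma_{j,l}\right)\int_{\partial X} H\, \rd S$. To convert this into the stated form $\beta_n \mathrm{vol}_{n-1}(\partial X)$ one must observe that $\int_{\partial X} H\, \rd S$ is not in general proportional to $\mathrm{vol}_{n-1}(\partial X)$ — so in fact the claim forces the geometric content at this order to be \emph{trivial}, i.e. the constant $\sum_{j,l}\gamma_{j,l}$ must be such that the $H$-dependent piece is actually absent, OR the correct reading is that $c_1$ should have come out proportional to $\mathrm{vol}_{n-1}(\partial X)$ with no curvature term because the relevant symbol expansion of $\Lambda$ at \emph{this} order only sees the principal symbols of the $A_{j,l}$. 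The cleaner route: the coefficient $c_1$ in $\mathcal{M}_X$ arises from the order-$(n-1)$ term, which by the scaling structure corresponds to the \emph{leading} (principal) symbol contribution of $\Lambda_{2j-1,2l}$ integrated over $\partial X$, and the principal symbol is $x'$-independent; the $\xi'=0$ evaluation then produces a constant, and integrating a constant over $\partial X$ yields $\mathrm{vol}_{n-1}(\partial X)$ times a universal factor. So I would re-index carefully to confirm that $c_1$ only involves principal symbols of the $\Lambda_{2j-1,2l}$ (equivalently, that the subprincipal/mean-curvature terms first show up in $c_2$), which is consistent with Theorem \ref{mainthm}d) where $H$ appears only in $c_2$.

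Concretely, the steps are: (i) write out $\Lambda = \mathbb{A}_{+-}^{-1}(1 - \mathbb{A}_{++})$ at the level of parameter-dependent symbols; (ii) use Theorem \ref{theboundaryops}a) to compute the principal symbol matrix of $\mathbb{A}_{+-}$ and $\mathbb{A}_{++}$ as universal-constant multiples of powers of $(R^2+|\xi'|^2)^{1/2}$, invert the matrix symbolically (a constant matrix times powers of $\langle R,\xi'\rangle$, so the inverse is explicit), and read off $\sigma_{\mathrm{princ}}(\Lambda_{2j-1,2l}) = \lambda_{j,l}\,(R^2+|\xi'|^2)^{(2j-2l-1)/2}$ with $\lambda_{j,l} = \lambda_{j,l}(n)$; (iii) substitute into the formula for $c_1$ from Proposition \ref{thecoeffi}, evaluate at $(\xi',R)=(0,1)$ to get the pure constant $\lambda_{j,l}$, and sum: $c_1 = \beta_n \int_{\partial X} 1\, \rd S = \beta_n \mathrm{vol}_{n-1}(\partial X)$ with $\beta_n := \tfrac{1}{n!\omega_n}\sum_{\frac m2 < j\le m}\sum_{0\le l< m/2}\lambda_{j,l}(n)$. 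The main obstacle I anticipate is purely bookkeeping: correctly tracking which order-$(n-k)$ coefficient $c_k$ picks up which homogeneity component of which matrix entry $\Lambda_{2j-1,2l}$ (the entries have different orders $2j-2l$, so "order $n-1$" pulls the principal symbol from some entries and lower-order symbols from others — though here the double sum ranges are arranged so that $2j-2l-1$ is exactly the homogeneity selected, i.e. it is uniformly the principal symbol that contributes), and then verifying that inverting the constant symbol matrix $\mathbb{A}_{+-}$ is legitimate in the parameter calculus for $\mathrm{Re}(R)$ large, which is already guaranteed by Corollary \ref{invertingaplumi}. The actual value of $\beta_n$ need not be computed here — Willerton's ball asymptotics \eqref{willertsasym} fix it a posteriori — so universality of $\beta_n$, which follows from the universality of the constants $c_{j,l}$ in Theorem \ref{theboundaryops}a), is all that is required.
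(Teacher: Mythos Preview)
Your proposal is correct and, once the self-correction is made, follows exactly the paper's argument: since $\Lambda_{2j-1,2l}$ has order $2j-1-2l$, the symbol $\sigma_{2j-2l-1}(\Lambda_{2j-1,2l})$ appearing in the formula for $c_1$ is the \emph{principal} symbol, computed purely from the principal symbols of $\mathbb{A}_{+-}$ and $\mathbb{A}_{++}$, which by Theorem~\ref{theboundaryops}a) are $x'$-independent universal constants times powers of $(R^2+|\xi'|^2)^{1/2}$; integrating a constant over $\partial X$ yields $\beta_n\,\mathrm{vol}_{n-1}(\partial X)$. The first paragraph's detour through subprincipal symbols and mean curvature is unnecessary here---that is precisely the computation for $c_2$, not $c_1$---so you can delete it and begin directly with your steps (i)--(iii).
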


\begin{proof}
By Proposition \ref{thecoeffi}, $c_1=\frac{1}{n!\omega_n}\sum_{\frac{m}{2}<j\leq m}\sum_{0\leq l<m/2}\int_{\partial X}\sigma_{2j-2l-1}(\Lambda_{2j-1,2l})(x,0,1)\,\rd S$. However, $\sigma_{2j-2l-1}(\Lambda_{2j-1,2l})$ is the top degree symbol and as such it is computed from the top degree symbols of $\mathbbm{A}_{+-}$ and $\mathbbm{A}_{++}$. By Theorem \ref{theboundaryops} these are constants independent of $X$. As such, for some universal constants $\beta_{j,l}$ (depending only on $n$), $\sigma_{2j-2l-1}(\Lambda_{2j-1,2l})=\beta_{j,l}$. The theorem follows upon setting 
$$\beta_n:=\frac{1}{n!\omega_n}\sum_{\frac{m}{2}<j\leq m}\sum_{0\leq l<m/2}\beta_{j,l}.$$
\end{proof}

The next result follows from a computation with products of pseudo-differential operators, see \cite[Theorem 18.1.9]{hormanderIII}.

\begin{lem}
\label{symbolcomputation}
Let $M$ be a closed manifold and $\Gamma\subseteq \C$ a sector. Suppose that $A_+\in \Psi^{m_+}(M;\Gamma)$ is invertible and that $A_-\in \Psi^{m_-}(M;\Gamma)$. Let $a_\pm\in C^\infty(T^*M\times \Gamma)$ denote full symbols of $A_\pm$ with asymptotic expansions $a_\pm\sim \sum_{j\geq 0} a_{\pm, m_\pm-j}$ with $a_{\pm, m_\pm}$ only depending on $R^2+|\xi|^2$ (for some metric on $M$).
\begin{enumerate}
\item[a)] A symbol $q = \sum_{j\in \N_0} q_{-m_+ - j}$ of $A_+^{-1}$ is determined recursively by
\begin{equation*}
q_{-m_+} = a_{+,m_+}^{-1}
\end{equation*}
\begin{equation} \label{recursionrelation}
q_{-m_+-j} = -q_{-m_+}\sum_{\substack{k+l+t=j\\l<j}}
\sum_{|\alpha| =t} \frac{(-i)^{|\alpha|}}{\alpha !} \, \partial_\xi^\alpha
a_{+, m_+-k} \, \partial_x^\alpha q_{-m_+-l}.
\end{equation}
\item[b)] A full symbol $p \ \sim \ \sum_{j\in \N_0} p_{-m_++m_- - j}$ of $A_+^{-1}A_-$ is given by 
\begin{equation} \label{fullsymb}
p_{-m_++m_- - j} = \sum_{k+l+t=j } \sum_{|\alpha|=t} \frac{(-i)^{|\alpha|}}{\alpha!}\partial_\xi^\alpha q_{-m_+-k} \partial_x^\alpha a_{-, m_- - l}.
\end{equation}
\item[c)] In particular, for a symbol $b$ of order $m_--m_+-2$, the expression 
$$(a_{+,m_+}^{-1}-a_{+,m_+}^{-1}a_{+,m_+-1}a_{+,m_+}^{-1})a_{-,m_-}-a_{+,m_+}^{-1}a_{-,m_--1}+b,$$
is a full symbol of $A_+^{-1}A_-$.
\end{enumerate}
\end{lem}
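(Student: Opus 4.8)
The plan is to prove Lemma~\ref{symbolcomputation} as a direct application of the standard composition and inversion calculus for pseudo-differential operators with parameter, treating $R$ on the same footing as the covariable $\xi$ throughout. The three parts build on one another: part (a) gives the symbol of a parametrix/inverse of $A_+$, part (b) composes it with $A_-$, and part (c) simply reads off the terms of (b) up to two orders below the leading one.

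\textbf{Part (a).} First I would recall the composition formula: if $A_+$ has full symbol $a_+$ and $Q$ has full symbol $q$, then $A_+ Q$ has full symbol $\sum_\alpha \frac{(-i)^{|\alpha|}}{\alpha!}\partial_\xi^\alpha a_+ \,\partial_x^\alpha q$, with the sum understood as an asymptotic expansion in the parameter-dependent calculus (see \cite[Part II]{GrubbDistOp} or Appendix~\ref{briefintropara}). Imposing $A_+ Q \sim 1$ and collecting terms homogeneous of degree $-j$ in $(R,\xi)$ gives, for $j=0$, the relation $a_{+,m_+} q_{-m_+} = 1$, hence $q_{-m_+} = a_{+,m_+}^{-1}$; here ellipticity of $A_+$ (its principal symbol $a_{+,m_+}$ depends only on $R^2+|\xi|^2$ and is nonvanishing for $(R,\xi)\neq 0$ in $\Gamma\times\R^n$, by the hypothesis, so it is invertible). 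For $j\geq 1$, separating the term $k=l=t=0$ (i.e. $a_{+,m_+} q_{-m_+-j}$) from the rest yields exactly the recursion \eqref{recursionrelation} after multiplying through by $q_{-m_+} = a_{+,m_+}^{-1}$ on the left. One should note that $A_+$ genuinely invertible (not just a parametrix) is already known for $\mathrm{Re}(R)$ large by Corollary~\ref{invertingaplumi} in the cases of interest; in general the symbol $q$ constructed this way is the symbol of a parametrix, and this suffices since the statement is about symbols. I would remark that each $q_{-m_+-j}$ is a well-defined homogeneous symbol of the stated order because the right-hand side of \eqref{recursionrelation} only involves $q_{-m_+-l}$ with $l<j$.

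\textbf{Part (b).} This is the same composition formula applied once more: $A_+^{-1}A_-$ (or $QA_-$ with $Q$ the parametrix) has full symbol $\sum_\alpha \frac{(-i)^{|\alpha|}}{\alpha!}\partial_\xi^\alpha q\,\partial_x^\alpha a_-$, and sorting the contributions by total homogeneity degree $-m_++m_- - j$ gives \eqref{fullsymb}, where $k$ indexes the order drop in $q$, $l$ the order drop in $a_-$, and $t=|\alpha|$ the order drop from the differentiations. \textbf{Part (c)} is then immediate bookkeeping: take $j=0$ and $j=1$ in \eqref{fullsymb}. The $j=0$ term is $q_{-m_+}a_{-,m_-} = a_{+,m_+}^{-1}a_{-,m_-}$; the $j=1$ term is $q_{-m_+-1}a_{-,m_-} + q_{-m_+}a_{-,m_--1} + (\text{the }t=1\text{ term})$, and by \eqref{recursionrelation} with $j=1$ we have $q_{-m_+-1} = -q_{-m_+}a_{+,m_+-1}q_{-m_+} = -a_{+,m_+}^{-1}a_{+,m_+-1}a_{+,m_+}^{-1}$. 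Substituting and collecting, the sum of the degree-$(m_--m_+)$ and degree-$(m_--m_+-1)$ parts equals $(a_{+,m_+}^{-1}-a_{+,m_+}^{-1}a_{+,m_+-1}a_{+,m_+}^{-1})a_{-,m_-} - a_{+,m_+}^{-1}a_{-,m_--1}$; all remaining terms (the $t=1$ contribution at $j=1$, involving one $\xi$-derivative, and everything with $j\geq 2$) have order $\leq m_--m_+-2$ and are absorbed into the error symbol $b$. This proves (c).

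The only genuinely delicate point — and the one I would state carefully rather than compute — is ensuring that all asymptotic expansions are valid in the \emph{parameter-dependent} calculus, i.e. that homogeneity and remainder estimates are uniform in $R\in\Gamma$; this is precisely what the formalism recalled in Appendix~\ref{briefintropara} (following Grubb, \cite{grubb77, GrubbDistOp}) is designed to guarantee, so the argument is a citation rather than a new estimate. Apart from that, the proof is entirely the standard symbolic calculus, which is why it suffices to cite \cite[Theorem 18.1.9]{hormanderIII} for the composition formula underlying all three parts; I do not expect any substantive obstacle beyond keeping track of which homogeneous term lands where.
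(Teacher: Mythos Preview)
Your approach is exactly the paper's: the lemma is stated there as a direct consequence of the composition formula \cite[Theorem 18.1.9]{hormanderIII}, with no further proof, and you have correctly unpacked that citation into the recursion for the parametrix symbol and the product expansion.

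One point in part (c) needs correcting. You write that the $t=1$ contribution at $j=1$ ``has order $\leq m_--m_+-2$'' and is absorbed into $b$. That is not right: the term $\sum_{|\alpha|=1}(-i)\,\partial_\xi^\alpha q_{-m_+}\,\partial_x^\alpha a_{-,m_-}$ is homogeneous of order exactly $m_--m_+-1$, so it cannot be pushed into $b$ on grounds of order alone. The reason it disappears is the hypothesis you have not yet used: since $a_{-,m_-}$ depends only on $R^2+|\xi|^2$, it is $x$-independent and $\partial_x^\alpha a_{-,m_-}=0$. The same observation is needed one step earlier, to justify your formula $q_{-m_+-1}=-a_{+,m_+}^{-1}a_{+,m_+-1}a_{+,m_+}^{-1}$: the recursion \eqref{recursionrelation} at $j=1$ also contains the $t=1$ term $-i\sum_\alpha \partial_\xi^\alpha a_{+,m_+}\,\partial_x^\alpha q_{-m_+}$, which vanishes only because $q_{-m_+}=a_{+,m_+}^{-1}$ is again a function of $R^2+|\xi|^2$ alone. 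So the special form of the principal symbols is precisely what makes part (c) come out as stated; make that explicit rather than appealing to an order count that does not hold.
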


\begin{prop}
There is a universal number $\gamma_n$ only depending on the dimension $n$ such that 
$$c_2=\gamma_n\int_{\partial X} H \mathrm{d}S,$$
where $H$ is the mean curvature of $\partial X$.
\end{prop}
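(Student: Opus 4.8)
The plan is to mimic the structure of the preceding proposition (the one producing $c_1 = \beta_n\,\mathrm{vol}_{n-1}(\partial X)$) but now at the subleading symbol level, where curvature necessarily enters. By Proposition \ref{thecoeffi},
\[
c_2=\frac{1}{n!\omega_n}\sum_{\frac{m}{2}<j\leq m}\sum_{0\leq l<m/2}\int_{\partial X}\sigma_{2j-2l-2}(\Lambda_{2j-1,2l})(x,0,1)\,\rd S,
\]
so the task reduces to identifying the symbol of $\Lambda$ of order two below the top. First I would invoke Theorem \ref{lambdaisps}, which gives $\Lambda(R)=\mathbb{A}_{+-}(R)^{-1}(1-\mathbb{A}_{++}(R))$, and feed the full symbols of $\mathbb{A}_{+-}$ and $\mathbb{A}_{++}$ into the product/inversion formulas of Lemma \ref{symbolcomputation}. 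Part c) of that lemma is tailor-made for exactly this: up to a symbol $b$ of order two below the leading order of $A_+^{-1}A_-$, the relevant piece of the full symbol of $A_+^{-1}A_-$ is
\[
\bigl(a_{+,m_+}^{-1}-a_{+,m_+}^{-1}a_{+,m_+-1}a_{+,m_+}^{-1}\bigr)a_{-,m_-}-a_{+,m_+}^{-1}a_{-,m_--1}.
\]
Evaluating at $\xi=0$, $R=1$, the free error symbol $b$ (which is of order $\le -2$ relative to the leading order, so contributes to $c_3$ and below, not $c_2$) drops out of the $c_2$-computation, and we are left with an expression built only from the principal symbols $a_{\pm,m_\pm}$ and the subprincipal symbols $a_{\pm,m_\pm-1}$ of $\mathbb{A}_{+-}$ and $1-\mathbb{A}_{++}$.

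Next I would substitute the symbol data supplied by Theorem \ref{theboundaryops}. Part a.) of that theorem says each principal symbol $\sigma_{j-l,R}(A_{j,l})$ equals $c_{j,l}(R^2+|\xi'|^2)^{(j-l)/2}$ with $c_{j,l}$ a universal constant depending only on $n$; part b.) says the order-$(j-l-1)$ part equals $c_{j,l,-1}H(x')(R^2+|\xi'|^2)^{(j-l-1)/2}$, again with universal constants. Crucially, the subprincipal symbol is \emph{linear} in the mean curvature $H(x')$, with coefficient independent of $X$, and carries no other geometric data. Plugging these into the expression from Lemma \ref{symbolcomputation}c) and evaluating at $(\xi',R)=(0,1)$, every term is either a universal constant (from products of principal symbols alone — these must cancel, since otherwise $c_2$ would contain a term proportional to $\mathrm{vol}_{n-1}(\partial X)$, and we will see below why that cannot happen) or a universal constant times $H(x')$ (from exactly one factor being a subprincipal symbol). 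Summing over $j,l$ and the matrix indices for the block operators $\mathbb{A}_{+-}$, $\mathbb{A}_{++}$, one obtains $\sigma_{2j-2l-2}(\Lambda_{2j-1,2l})(x,0,1)=\gamma_{j,l}' + \gamma_{j,l}'' H(x')$ for universal constants $\gamma_{j,l}',\gamma_{j,l}''$ depending only on $n$. Hence $c_2 = A_n \,\mathrm{vol}_{n-1}(\partial X) + \gamma_n \int_{\partial X} H\,\rd S$ for universal constants $A_n,\gamma_n$.

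The final step is to eliminate the spurious $\mathrm{vol}_{n-1}(\partial X)$ term, i.e.\ to show $A_n=0$. Here I would appeal to Willerton's expansion \eqref{willertsasym}: applying the formula just derived to $X=B_n$ (where $\mathrm{vol}_{n-1}(\partial B_n)=n\omega_n$ and $\int_{\partial B_n}H\,\rd S = (n-1)\mathrm{vol}_{n-1}(\partial B_n)=n(n-1)\omega_n$, since $H\equiv 1$ for the unit sphere with our normalization) gives one linear equation relating $A_n$ and $\gamma_n$. This single scalar equation is not enough to conclude $A_n=0$ by itself; so instead I would invoke a scaling or a second test domain — but in fact there is a cleaner route. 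One can argue directly at the symbol level that no $X$-independent constant term can appear in $\sigma_{2j-2l-2}(\Lambda_{2j-1,2l})(x,0,1)$ beyond what is forced: the order-$(j-l-1)$ symbol of each $A_{j,l}$ vanishes when $\partial X$ is totally geodesic flat (cf.\ the half-space computation behind Theorem \ref{theboundaryops}), because in that model the layer potential symbols reduce to their translation-invariant leading parts; hence the constant $\gamma_{j,l}'$ from products of principal symbols alone must already be absorbed into $c_1$ rather than $c_2$ — more precisely, the invariance-theoretic structure in Theorem \ref{mainthm}e) forces the $c_2$-density to be a universal constant times the single local scalar invariant $H$ of weight one on $\partial X$, leaving no room for a constant (weight-zero) contribution. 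Therefore $A_n=0$ and $c_2=\gamma_n\int_{\partial X}H\,\rd S$, as claimed, and the precise value of $\gamma_n$ (hence part d) of Theorem \ref{mainthm}) then follows by comparison with \eqref{willertsasym}. The main obstacle is the bookkeeping in the second step: assembling the $2m\times 2m$ block structure, tracking which products of principal versus subprincipal symbols survive evaluation at $\xi'=0$, and confirming that the mean curvature is the \emph{only} geometric quantity that can appear at this order — the last point being where one genuinely needs part b.) of Theorem \ref{theboundaryops} rather than just part a.).
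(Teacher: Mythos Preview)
Your approach is essentially the paper's, but you manufacture an obstacle that is not there and then work hard to remove it. The phantom is your constant term $\gamma_{j,l}'$. Look again at Lemma~\ref{symbolcomputation}: its hypothesis is that the principal symbols $a_{\pm,m_\pm}$ depend only on $R^2+|\xi|^2$, hence are $x$-independent. This kills the derivative terms $\partial_\xi^\alpha(\cdot)\,\partial_x^\alpha(\cdot)$ with $|\alpha|=1$ in the recursions \eqref{recursionrelation}--\eqref{fullsymb}, so the homogeneous part of order one below the top in part~c) is exactly
\[
-\,a_{+,m_+}^{-1}a_{+,m_+-1}a_{+,m_+}^{-1}\,a_{-,m_-}\;-\;a_{+,m_+}^{-1}a_{-,m_--1},
\]
and every summand contains precisely one subprincipal factor $a_{\pm,m_\pm-1}$. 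By Theorem~\ref{theboundaryops}\,b.) each of those factors equals a universal constant times $H(x')(R^2+|\xi'|^2)^{(\cdot)/2}$, while the remaining principal factors are universal constants times powers of $R^2+|\xi'|^2$. Hence $\sigma_{2j-2l-2}(\Lambda_{2j-1,2l})(x',\xi',R)=\gamma_{j,l}\,H(x')(R^2+|\xi'|^2)^{j-l-1}$ directly, with $\gamma_{j,l}$ depending only on $n$; there is no $\gamma_{j,l}'$ to eliminate. This is exactly the paper's argument.

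Your fallback via invariance theory (part~e) of Theorem~\ref{mainthm}) does give a correct conclusion, but it is logically awkward here: part~e) is established in Subsection~\ref{genstrusubs}, \emph{after} this proposition, and in any case the point of the present proposition is to get $c_2$ by a direct symbol computation rather than by appealing to the general structural result. So drop the $A_n$ discussion entirely and read off the proportionality to $H$ straight from the subleading formula above.
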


\begin{proof}
By Proposition \ref{thecoeffi}, 
$$c_2=\frac{1}{n!\omega_n}\sum_{\frac{m}{2}<j\leq m}\sum_{0\leq l<m/2}\int_{\partial X}\sigma_{2j-2l-2}(\Lambda_{2j-1,2l})(x,0,1)\,\rd S.$$ 
We first analyze the symbol $\sigma_{2j-2l-2}(\Lambda_{2j-1,2l})$. Following the notation in the proof of Theorem \ref{lambdaisps}, we write $\mathbb{A}_{+-}(R)^{-1}=(B_{j+m,k})_{j,k=0}^{m-1}$ for large $\mathrm{Re}(R)$, where $B_{j+m,k}\in \Psi^{j+m-k}(\partial X;\Gamma)$. Lemma \ref{symbolcomputation} implies that for $\frac{m}{2}<j\leq m$, $0\leq l<m/2$,
\small
\begin{align}
\nonumber
\sigma_{2j-2l-2}(\Lambda_{2j-1,2l})=&\sum_{k_1,k_2,k_3=0}^{m-1}\sigma_{2j+m-1-k_1}(B_{2j+m-1,k_1})\sigma_{k_1-m-k_2-1}(A_{k_1,m+k_2})\sigma_{k_2+m-k_3}(B_{k_2+m,k_3})\sigma_{k_3-2l}(\delta_{k_3,2l}-A_{k_3,2l})-\\
\label{lowsymcomput}
&-\sum_{k=0}^{m-1}\sigma_{2j+m-1-k}(B_{2j+m-1,k})\sigma_{k-2l-1}(\delta_{k,2l}-A_{k,2l})=\\
\nonumber
=&\sum_{k_1,k_2,k_3=0}^{m-1}\sigma_{2j+m-1-k_1}(B_{2j+m-1,k_1})\sigma_{k_1-m-k_2-1}(A_{k_1,m+k_2})\sigma_{k_2+m-k_3}(B_{k_2+m,k_3})\sigma_{k_3-2l}(\delta_{k_3,2l}-A_{k_3,2l})+\\
\nonumber
&+\sum_{k=0}^{m-1}\sigma_{2j+m-1-k}(B_{2j+m-1,k})\sigma_{k-2l-1}(A_{k,2l}).
\end{align}
\normalsize
In the last identity we used that $\sigma_{k-2l-1}(\delta_{k,2l})=0$ which follows from the facts that $\delta_{k,2l}=0$ if $k\neq 2l$ and, as the lower order symbols of the identity operator vanishes, $\sigma_{k-2l-1}(\delta_{k,2l})=\sigma_{k-2l-1}(\mathrm{id})=0$ when $k=2l$. 

Let us turn to the symbols appearing in Equation \eqref{lowsymcomput}. By Theorem \ref{theboundaryops}, the principal symbols $(\sigma_{j+m-k}(B_{j+m,k}))_{j,k=0}^{m-1}$ and $(\sigma_{j-k}(A_{j,k}))_{j,k=0}^{m-1}$ depend on the dimension $n$ but are independent of $X$. More precisely, the $(j,k)$-th entry of $(\sigma_{j+m-k}(B_{j+m,k}))_{j,k=0}^{m-1}$ is a constant $X$-independent multiple of $(R^2-|\xi'|^{2})^{(j+m-k)/2}$ and the $(j,k)$-th entry of $(\sigma_{j-k}(A_{j,k}))_{j,k=0}^{m-1}$ is a constant $X$-independent multiple of $(R^2-|\xi'|^{2})^{(j-k)/2}$. Moreover, Theorem \ref{theboundaryops} shows that the $(j,k)$-th entry of $(\sigma_{j-k-1}(A_{j,k}))_{j,k=0}^{m-1}$ is a constant $X$-independent multiple of $H(x')(R^2-|\xi'|^{2})^{(j-k-1)/2}$. 

Combining our discussion in the previous paragraph with Equation \eqref{lowsymcomput}, we conclude that there are constants $\gamma_{j,l}$ independent of $X$ (depending only on $n$), such that 
$$\sigma_{2j-2l-2}(\Lambda_{2j-1,2l})(x',\xi',R)=\gamma_{j,l}H(x')(R^2+|\xi'|^2)^{j-l-1}.$$ 
The theorem follows upon setting 
$$\gamma_n:=\frac{1}{n!\omega_n}\sum_{\frac{m}{2}<j\leq m}\sum_{0\leq l<m/2}\gamma_{j,l}.$$
\end{proof}

From the explicit formulas for $c_j$, when $X$ is convex we conclude that $c_j$ is proportional to the intrinsic volume $V_{n-j}(X)$ for $j=0,1,2$ \cite[pp.~210]{schneider}.

\subsection{General structure of $c_j$}
\label{genstrusubs}

The formula \eqref{fullsymb}, combined with \eqref{recursionrelation} and \eqref{lowerorder} (see below on page \pageref{lowerorder}), allows to deduce the general structure of the coefficients $c_j$.

To do so, we pick local coordinates at a point on $\partial X$. We can assume that this point is $0\in \R^n$ and that the coordinates are of the form $(x',S(x'))$, where $x'$ belongs to some neighborhood of $0\in \R^{n-1}$ and $S$ is a scalar function with $S(0)=0$ and $\grad S(0)=0$. We say that product of derivatives of the metric $|\xi'|$ and $\nabla S$ is of weight $k$, if the total number of derivatives appearing in that term is $k$. We say that a polynomial in $|\xi'|$ and $\nabla S$ is of weight $k$, if each term of the polynomial is of weight $k$. 

\begin{lem}\label{locallem}
For $j\geq 1$, the coefficient $c_{j}$ is an integral over $\partial X$ of a polynomial of products of derivatives in the metric and $S$, with weight $j-1$.
\end{lem}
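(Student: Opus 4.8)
The plan is to track the weight (total number of derivatives of the metric $|\xi'|$ and of $S$) through the recursive symbol computations that produce $\Lambda_{2j-1,2l}$, starting from the known structure of the symbols of $\mathbb{A}$. By Theorem \ref{theboundaryops}, the principal symbol $\sigma_{j-l}(A_{j,l})$ is a constant times $(R^2+|\xi'|^2)^{(j-l)/2}$, so in the chosen coordinates (Fermi-type coordinates at $0$, with $S(0)=0$, $\grad S(0)=0$) it is a polynomial of weight $0$ in the derivatives of $|\xi'|$ (and involves no derivative of $S$). The subprincipal symbol $\sigma_{j-l-1}(A_{j,l})$ is $c_{j,l,-1}H(x')(R^2+|\xi'|^2)^{(j-l-1)/2}$, and since $H$ is built from the second fundamental form, hence from one derivative of $S$ in these coordinates, it is of weight $1$. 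More generally I would record, as the inductive hypothesis on the symbol of $\mathbb{A}$, that $\sigma_{j-l-i}(A_{j,l})$ is a polynomial of weight $i$ in the derivatives of $|\xi'|$ and $S$ — this is exactly the content of Equation \eqref{lowerorder} referred to on page \pageref{lowerorder}, and I would invoke it as given.

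Next I would propagate this weight bookkeeping through the two algebraic steps. First, the inversion formula \eqref{recursionrelation}: in the expression for $q_{-m_+-j}$, each term carries a factor $\partial_\xi^\alpha a_{+,m_+-k}$ of weight $k+|\alpha|$ (the $|\alpha|$ extra $\xi$-derivatives of the metric symbol raise the weight by $|\alpha|$) times $\partial_x^\alpha q_{-m_+-l}$; by induction $q_{-m_+-l}$ has weight $l$, and the $|\alpha|$ extra $x$-derivatives add $|\alpha|$ more, so the term has weight $k + |\alpha| + l + |\alpha| = j + |\alpha| \geq \dots$ — here I need to be a little careful: the correct statement is that the constraint $k+l+t=j$ with $t=|\alpha|$ forces the weight to be exactly $j$ once one counts the $|\alpha|$ derivatives on $a$ and the $|\alpha|$ derivatives on $q$ correctly, i.e. $t + k$ from the $a$-factor and $t$-from-the-$x$-derivatives $+\,l$ from $q$, which together with $k+l+t=j$ gives weight $2t+k+l = j+t \geq j$. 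So in fact $q_{-m_+-j}$ is a sum of pieces of weight $\geq j$; the piece of weight exactly $j$ comes from $t=0$, and higher-weight pieces sit in lower-order symbols only if we reorganize — the cleanest formulation is: $q_{-m_+-j}$ has weight $j$ after using homogeneity in $(R,\xi)$ to absorb the excess, because each unit of $t$ also lowers the homogeneity degree by a matching amount. I would state and prove this as: \emph{the symbol of total homogeneity degree $-m_+ - j$ in $A_+^{-1}$ is a polynomial of weight $j$}. The same argument applied to \eqref{fullsymb} then gives that the order $-m_+ + m_- - j$ symbol of $A_+^{-1}A_-$ is of weight $j$.

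Applying this with $A_+ = \mathbb{A}_{+-}$, $A_- = 1 - \mathbb{A}_{++}$ and $\Lambda = \mathbb{A}_{+-}^{-1}(1 - \mathbb{A}_{++})$ from Theorem \ref{lambdaisps}, I obtain that $\sigma_{(2j-1)+m-2l - i}(\Lambda_{2j-1,2l})$ is a polynomial of weight $i$ in the derivatives of the metric and $S$. Finally, by Proposition \ref{thecoeffi}, $c_j$ is (up to the universal constant $\tfrac{1}{n!\omega_n}$ and a finite sum over $j,l$) an integral over $\partial X$ of $\sigma_{2j-2l-j}(\Lambda_{2j-1,2l})(x,0,1)$; the relevant drop in order from the principal symbol order $2j-2l$ down to $2j-2l-j$ is $j$, but here I must match this against the correct shift so that the weight comes out $j-1$ rather than $j$ — the discrepancy is exactly the one already visible in part d), where $c_1$ (weight $0$, a multiple of $\mathrm{vol}_{n-1}$) comes from the top symbol and $c_2$ (weight $1$, a multiple of $\int H\,\rd S$) from the subprincipal symbol. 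Tracing this offset carefully — i.e. that the coefficient $c_j$ picks up the symbol component which is $j-1$ orders below the principal one — yields that $c_j$ is an integral over $\partial X$ of a polynomial of weight $j-1$, which is the claim. The main obstacle I anticipate is precisely this careful reconciliation of the homogeneity-degree drop with the weight count in \eqref{recursionrelation} and \eqref{fullsymb}: one must verify that multiplying symbol components and differentiating never produces a term of weight strictly less than the nominal value, so that weight $j-1$ is attained exactly and not undershot, and that the $x$-derivatives falling on $S$ rather than on $|\xi'|$ are counted consistently; the rest is a bookkeeping induction on $j$.
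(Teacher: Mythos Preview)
Your approach is exactly the one the paper takes: read off the weight of the homogeneous components of the symbols of $\mathbb{A}_{++}$ and $\mathbb{A}_{+-}$ from \eqref{lowerorder}, then check that the recursions \eqref{recursionrelation} and \eqref{fullsymb} preserve this structure, and finally insert into Proposition~\ref{thecoeffi}.

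The one place where your bookkeeping goes astray is the claim that $\partial_\xi^\alpha a_{+,m_+-k}$ has weight $k+|\alpha|$. The weight counts $x'$-derivatives of the geometric data ($\nabla S$ and the metric), and in the chosen coordinates the $\xi$-derivatives fall only on the factor $(R^2+|\xi'|^2)$, which carries no $x'$-derivatives of $S$ or of the metric; hence $\partial_\xi^\alpha a_{+,m_+-k}$ still has weight $k$. With this correction each summand in \eqref{recursionrelation} has weight $k+(l+t)=j$ exactly, and the same count works in \eqref{fullsymb} --- there is no excess $t$ to ``absorb via homogeneity'' and no reconciliation is needed. Separately, the order of $\Lambda_{2j-1,2l}$ is $(2j-1)-2l=2j-2l-1$, not $2j-2l$; thus the symbol $\sigma_{2j-2l-k}$ appearing in Proposition~\ref{thecoeffi} sits $k-1$ steps below the principal one, and the weight $k-1$ for $c_k$ drops out directly, without the offset-chasing you anticipate.
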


Indeed, from \eqref{lowerorder}, using $S(x')=0$ and $\grad S(x')=0$ at $x'=0$, the terms of the symbols of $\mathbb{A}_{++}$ and $\mathbb{A}_{+-}$ are a sum of products of second $\frac{\partial^2S}{\partial x_i \partial x_j}$ and higher-order derivatives of $S$, and powers of $(R^2+|\xi'|^2)$. The structure is preserved by \eqref{recursionrelation} and \eqref{fullsymb}. One confirms that at $(\xi, R)=(0,1)$ they have the above-mentioned weight. 

Using Gilkey's invariant theory \cite{gilkey}, the local formulas from Lemma \ref{locallem} may be interpreted geometrically in terms of curvatures. Similar arguments can be found also in \cite{sherpolte}. The situation is simplified by the fact that the curvature tensor $R_{ijkl}$ of $\mathbb{R}^n$ vanishes. We let $L_{ij}$ denote the second fundamental form and $\nabla_{\partial X}$ the covariant derivatives on $\partial X$. 

\begin{lem}
\label{globallem}
The coefficient $c_j$ is an integral over $\partial X$ of a polynomial in the entries of $\nabla_{\partial X}^k L$, $0 \leq k\leq j-2$. The total number of covariant derivatives appearing in each term of the polynomial is $j-2$. 
\end{lem}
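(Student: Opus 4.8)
The plan is to convert the local, coordinate-dependent description of Lemma \ref{locallem} into an invariant geometric statement by a standard invariance-theory argument in the spirit of Gilkey \cite{gilkey} and \cite{abp}. First I would set up the right notion of a local invariant: for each $j\geq 1$ the integrand of $c_j$ produced in Lemma \ref{locallem} is, at a point $p\in\partial X$, a universal polynomial $P_j$ in the jets of the embedding $\partial X\hookrightarrow\R^n$ (equivalently, in the Taylor coefficients of $S$), normalized at $p$ by $S(0)=0$, $\grad S(0)=0$. Because $\mathcal{M}_X(R)$ and every $\mathcal{D}^j_R$ are defined invariantly — independent of the choice of Euclidean coordinates — the coefficient $c_j$, and hence the integrand modulo divergences, is an invariant of $\partial X$ as an isometrically embedded hypersurface in flat space. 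By the theory of natural local invariants of submanifolds of $\R^n$, such an invariant is a universal polynomial in the second fundamental form $L$ and its iterated covariant derivatives $\nabla_{\partial X}^k L$ (the ambient curvature $R_{ijkl}\equiv 0$, and by the Gauss and Codazzi equations the intrinsic curvature of $\partial X$ and its derivatives are themselves polynomials in $L$ and $\nabla_{\partial X} L$, so no independent curvature terms enter).

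Next I would track the weight. In Lemma \ref{locallem} the integrand has weight $j-1$ in the sense that each monomial carries exactly $j-1$ coordinate derivatives distributed among the factors $\nabla S$ and the metric $|\xi'|$ (the latter contributing derivatives of the Euclidean metric through the embedding). One $S$-derivative is "used up" for free at the normalization point because $\grad S(0)=0$, so the lowest-order nonvanishing contributions start with second derivatives of $S$, i.e.\ with $L$ itself, which already accounts for $2$ derivatives; an extra derivative of a metric factor only appears as a covariant derivative of $L$ when re-expressed invariantly. Carrying out this bookkeeping — assigning $\nabla_{\partial X}^k L$ the weight $k+2$ and checking that a weight-$(j-1)$ polynomial in the naive derivative count becomes a polynomial of total covariant-derivative count $j-2$ — gives precisely the claimed statement that each monomial in $P_j$ has total number of covariant derivatives equal to $j-2$ (so for $j=1,2$ the integrand is a constant resp.\ a multiple of the trace of $L$, matching part d)). Concretely, if a monomial is $\prod_i \nabla_{\partial X}^{k_i} L$, then its weight is $\sum_i (k_i+2)$; setting this equal to the combinatorial weight coming from Lemma \ref{locallem} and simplifying yields $\sum_i k_i = j-2$.

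The passage from the local statement to the invariant one is the one place that needs care, so that is the main obstacle. The subtlety is that $c_j$ is an integral, so the integrand is only well-defined up to a total divergence $\mathrm{div}_{\partial X}(W)$ for a tangent vector field $W$ built naturally from $L$ and its derivatives; one must argue that the specific integrand produced by the symbol calculus, before integration, is already polynomial in $\nabla_{\partial X}^k L$ (not merely after adding divergences), or alternatively phrase the conclusion modulo divergences, which suffices for all stated consequences. I would handle this by the standard device: the symbol computation in \eqref{recursionrelation} and \eqref{fullsymb}, read through the adapted coordinates $(x',S(x'))$, produces at each point a polynomial in the partial derivatives $\partial^\beta S(0)$ with $2\leq|\beta|\leq j$; expanding the full symbol of the Levi-Civita connection of the induced metric and the Gauss/Codazzi relations lets one substitute $\partial^2 S$-blocks by $L$ and higher blocks by $\nabla_{\partial X}^{|\beta|-2}L$ plus lower-weight corrections, and a downward induction on weight closes the argument. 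The universality (independence of $X$) of the polynomial $P_j$ is inherited directly from the universality of the constant matrices $c_{j,l}$, $c_{j,l,-1}$ in Theorem \ref{theboundaryops} and of the recursion \eqref{recursionrelation}–\eqref{fullsymb}, which involve no data beyond the jets of $S$ and the dimension $n$.
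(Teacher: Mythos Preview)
Your approach is exactly the one the paper takes: invoke Gilkey's invariance theory \cite{gilkey} (with a nod also to \cite{sherpolte}) to reinterpret the local formula of Lemma~\ref{locallem} as a universal polynomial in $\nabla_{\partial X}^k L$, using that the ambient curvature of $\R^n$ vanishes so that, via Gauss--Codazzi, no independent curvature invariants enter. The paper itself gives no further argument beyond this sentence, so your outline is already more detailed than the original.

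One point to tighten: the explicit weight count you give does not literally produce $\sum_i k_i = j-2$. Under the scaling $X\mapsto \lambda X$ one has $\nabla_{\partial X}^k L \mapsto \lambda^{-(k+1)}\nabla_{\partial X}^k L$ and $\rd S\mapsto \lambda^{n-1}\rd S$, while $c_j(\lambda X)=\lambda^{n-j}c_j(X)$; hence each monomial $\prod_{i=1}^r \nabla^{k_i}L$ in the integrand must satisfy $\sum_i(k_i+1)=j-1$, i.e.\ $\sum_i k_i = j-1-r$, which depends on the number $r$ of $L$-factors (for instance, quadratic terms in $L$ with no covariant derivatives are allowed when $j=3$). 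Your assignment of weight $k+2$ to $\nabla^k L$ and the ensuing ``simplification'' to $\sum_i k_i=j-2$ is therefore not consistent for general $r$. This does not undermine the method---invariance theory together with the scaling homogeneity does pin down exactly which monomials can occur---but the paper's phrase ``total number of covariant derivatives is $j-2$'' is terse and should be read as a homogeneity statement in Gilkey's weight conventions rather than as the literal identity $\sum_i k_i=j-2$. Your write-up should align the two weight conventions carefully rather than assert that identity directly. The divergence issue you raise is real in principle but, as you note, moot here: the symbol calculus produces a specific pointwise invariant, not merely an equivalence class modulo exact forms.
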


\large
\section{Meromorphic extension of the magnitude}
\normalsize

The magnitude of a domain in $\R^n$ can be expressed in terms of its volume and its Dirichlet-to-Neumann operator (associated with \eqref{thebvpgener}). We shall see that the operator $R\mathbb{A}(R)$ admits a holomorphic extension to $\C$. Using the theory of meromorphic families of Fredholm operators, we deduce that also the Dirichlet-to-Neumann operator $\Lambda(R)$, and therefore the magnitude function, extends meromorphically to $\C$. We discuss the pole structure of $\mathcal{M}_X$ in the context of scattering theory in Subsection \ref{scattpolesubs}.

\subsection{Meromorphic extension of $\mathcal{M}_X$}
\label{meroextsubs}

\begin{lem}
\label{thickaholo}
The function $\C\ni R\mapsto R\mathbb{A}(R)\in \mathbb{B}(\mathcal{H})$ is holomorphic in the norm topology.
\end{lem}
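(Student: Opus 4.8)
The plan is to show that every entry $A_{jl}(R)$ of the Calderón projector $\mathbb{A}(R)$, multiplied by $R$, extends holomorphically to all of $\C$. Since $\mathbb{A}(R)$ is a finite $2m\times 2m$ matrix of operators, it suffices to treat each $A_{jl}(R)$ separately, and by Theorem \ref{theboundaryops} these are defined by the kernels $K_{j,l}(R;x,y)$, which are obtained by applying the boundary differential operators $\mathcal{D}^j_{R,x}$ and $\mathcal{D}^{n-l}_{R,y}$ to the fundamental solution $K(R;z)=\frac{\kappa_n}{R}\mathrm{e}^{-R|z|}$. The first step is therefore to examine the $R$-dependence of $K$ and its derivatives explicitly: the function $z\mapsto \mathrm{e}^{-R|z|}$ is entire in $R$ for each fixed $z\neq 0$, and differentiating it in $x$ or $y$ (as the operators $\mathcal{D}^j_R$ require, including the factors $(R^2-\Delta)^{k}$) produces a finite sum of terms of the form $P(R,z,\nu(y))\,|z|^{-a}\,\mathrm{e}^{-R|z|}$ with $P$ polynomial in $R$. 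Crucially, the only genuinely singular behaviour in $R$ in $K$ itself is the prefactor $1/R$, which is killed upon multiplying by $R$; every further operation is polynomial in $R$.

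Next I would make precise the sense in which $R\mapsto R A_{jl}(R)$ is holomorphic as a $\mathbb{B}(\mathcal{H})$-valued map. The natural route is to verify holomorphy weakly — i.e. that $R\mapsto \langle R A_{jl}(R)f,g\rangle$ is holomorphic for $f,g$ ranging over a dense set such as $C^\infty(\partial X)$ — together with a local uniform bound on $\|R A_{jl}(R)\|_{\mathbb{B}(\mathcal{H})}$, which upgrades weak holomorphy to norm holomorphy (this is a standard fact, e.g. via Morera's theorem applied entrywise plus the uniform boundedness principle, or directly since $\mathcal{H}$ is separable). For the weak statement, one writes $\langle R A_{jl}(R)f,g\rangle$ as an absolutely convergent double integral over $\partial X\times\partial X$ of a kernel that is, for $x\neq y$, entire in $R$ and jointly continuous; one then differentiates under the integral sign in $R$ using the pointwise entirety and an $R$-locally-uniform dominating function, the singularity at $x=y$ being integrable on the compact manifold $\partial X$ uniformly for $R$ in compact subsets of $\C$ (this integrability is exactly the content of the pseudo-differential statement in Theorem \ref{theboundaryops}, which gives order $j-l\leq n$ and hence a kernel singularity mild enough to integrate against the smooth measure $\mathrm{d}S$). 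For the uniform norm bound one again invokes Theorem \ref{theboundaryops}: $A_{jl}\in\Psi^{j-l}(\partial X;\Gamma)$ with parameter, but here we need estimates for $R$ in an arbitrary compact subset of $\C$, not just a sector, which is obtained directly from the explicit exponential kernels (the exponential $\mathrm{e}^{-R|z|}$ is harmless on compact $R$-sets since $|z|$ is bounded on $\partial X\times\partial X$).

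The third step is simply to assemble: a finite matrix whose entries are $\mathbb{B}(\cdot)$-valued holomorphic functions is holomorphic, and the block structure of $\mathcal{H}=\mathcal{H}_+\oplus\mathcal{H}_-$ together with the mapping properties $A_{jl}\colon H^{2m-j-1/2}(\partial X)\to H^{2m-j-1/2}(\partial X)$ shows $R\mathbb{A}(R)\in\mathbb{B}(\mathcal{H})$ genuinely.

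I expect the main obstacle to be bookkeeping near the diagonal $x=y$: one must check that after applying all the derivatives in $\mathcal{D}^j_{R,x}$ and $\mathcal{D}^{n-l}_{R,y}$ to $\frac{\kappa_n}{R}\mathrm{e}^{-R|x-y|}$ and multiplying by $R$, the resulting distributional kernel (including the delta-type and finite-part contributions that arise because $A_{jl}$ can have positive order and because $K(R;\cdot)$ is only $C^{n-1}$ at the origin) depends holomorphically on $R$ — in particular that the ``jump'' terms produced by the exterior limit defining $A_{jl}(R)$, which encode the fact that $\mathbb{A}$ is a projection, carry no new $R$-singularity. This is really a matter of tracking that every coefficient appearing is polynomial in $R$, which follows from the explicit form of $K$, but it requires care to state cleanly; the cleanest formulation is to observe that $R\,K(R;z)=\kappa_n\mathrm{e}^{-R|z|}$ and all operators $\mathcal{D}^j_R$ have polynomial-in-$R$ coefficients, so that $R\,K_{j,l}(R;x,y)$ is, as a distribution on $\partial X\times\partial X$, a finite sum $\sum_\alpha R^{\,\alpha}\,T_\alpha(x,y)$ with $R$-independent distributions $T_\alpha$, from which holomorphy in $R$ is immediate.
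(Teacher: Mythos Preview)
Your intuition is right --- the only possible $R$-singularity in $K(R;z)=\kappa_n R^{-1}e^{-R|z|}$ is the prefactor $1/R$, and all of the boundary differential operators $\mathcal{D}^j_R$ have coefficients polynomial in $R$ --- but the execution has two genuine gaps.

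First, your integrability claim for the diagonal singularity fails for the positive-order entries. The operator $A_{jl}$ has order $j-l$ on the $(n-1)$-dimensional manifold $\partial X$; for $j-l>0$ its Schwartz kernel is \emph{not} a locally integrable function near the diagonal but a genuine distribution (for $j-l\geq n-1$ it even contains derivatives of $\delta$). Theorem \ref{theboundaryops} does not say the kernel is integrable; it says $A_{jl}$ is pseudo-differential of order $j-l$, which for positive order means precisely the opposite. So the pairing $\langle RA_{jl}(R)f,g\rangle$ cannot be written as an absolutely convergent double integral of $K_{j,l}$ against $f\otimes g$, and your route to weak holomorphy via dominated convergence breaks down exactly where the jump terms live.

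Second, your proposed ``cleanest formulation'' is false as stated: $R\,K_{j,l}(R;x,y)$ is not a finite sum $\sum_\alpha R^\alpha T_\alpha(x,y)$ with $R$-independent distributions $T_\alpha$, because the factor $e^{-R|x-y|}$ itself depends on $R$. What is true is that $R\,K_{j,l}$ is a polynomial-in-$R$ combination of derivatives of the entire family $R\mapsto e^{-R|\cdot|}$, but turning that into an honest statement about holomorphy of $R\mapsto RA_{jl}(R)$ in $\mathbb{B}(\mathcal{H})$ requires exactly the step you are missing: a topology in which both the entire dependence of $e^{-R|\cdot|}$ and the continuity of the trace operations are visible simultaneously.

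The paper supplies this missing step by \emph{not} working with the boundary-to-boundary kernels at all. It restricts the single layer potential $R\mathcal{A}_n(R)$ to a bounded subdomain $\Omega'\subset\Omega$, obtaining
\[
\tilde{\mathcal{A}}(R)f(x)=\kappa_n\int_{\partial X} e^{-R|x-y|}f(y)\,\mathrm{d}S(y),\qquad x\in\Omega',
\]
which maps $H^{1/2}(\partial X)\to H^{2m}(\Omega')$. Because $\Omega'$ is bounded, the exponential and all its derivatives are controlled uniformly on compact $R$-sets, and holomorphy of $R\mapsto\tilde{\mathcal{A}}(R)$ in this operator norm is straightforward. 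Every entry $RA_{jl}(R)$ is then recovered from $\tilde{\mathcal{A}}(R)$ by composing with the trace maps $\mathcal{D}^j_R:H^{2m}(\Omega')\to H^{2m-j-1/2}(\partial X)$ and the corresponding operations in the $y$-variable, all of which are bounded and depend polynomially on $R$. The high-regularity target $H^{2m}(\Omega')$ absorbs the diagonal singularity that defeats your direct approach: the traces are continuous from $H^{2m}(\Omega')$, so no exterior-limit bookkeeping or finite-part analysis is needed.
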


\begin{proof}
We decompose $\Omega$ as in Lemma \ref{exteningtoomega}. That is, we decompose $\overline{\Omega}=\overline{\Omega'}{\cup}\overline{\Omega''}$ where $\Omega'$ is a bounded domain with smooth boundary and $\Omega''$ is a smooth domain with boundary, compact complement and $\mathrm{d}(X,\Omega'')=r>0$. Let $\tilde{\mathcal{A}}(R)\in \mathbb{B}(H^{1/2}(\partial X),H^{2m}(\Omega'))$ denote the composition of $R\mathcal{A}_n(R)$ and the mapping $H^{2m}(\Omega)\to H^{2m}(\Omega')$ induced by restriction. By construction, $\tilde{\mathcal{A}}(R)$ is well defined for all $R\in \C$. Indeed, for $f\in H^{1/2}(\partial X)$, $\tilde{\mathcal{A}}(R)f$ is given by 
$$\tilde{\mathcal{A}}(R)f(x):=\kappa_n\int_{\partial X} \mathrm{e}^{-R|x-y|}f(y)\mathrm{d}S(y), \quad x\in \Omega'.$$
Since $\Omega'$ is compact, it follows that $R\mapsto \tilde{\mathcal{A}}(R)\in \mathbb{B}(H^{1/2}(\partial X),H^{2m}(\Omega'))$ depends holomorphically on $R\in \C$. The entries $(RA_{jl}(R))_{j,l=0}^n$ are obtained from $\tilde{\mathcal{A}}(R)$ using holomorphic operations, and holomorphicity of $R\mathbb{A}(R)$ follows. 
\end{proof}

The following version of the analytic Fredholm theorem for holomorphic families of Fredholm  operators is well known. Its proof can be found in \cite[Proposition 1.1.8]{leschhab}.

\begin{lem}
\label{merofamily}
Let $\mathcal{H}_1$ and $\mathcal{H}_2$ be Hilbert spaces, $D\subseteq \C$ a connected domain and $T:D\to \mathrm{Fred}(\mathcal{H}_1,\mathcal{H}_2)$ a holomorphic function to the open subset $\mathrm{Fred}(\mathcal{H}_1,\mathcal{H}_2)\subseteq \mathbb{B}(\mathcal{H}_1,\mathcal{H}_2)$ of Fredholm operators. Assume that there exists a $\lambda_0\in D$ such that $T(\lambda_0)$ is invertible. Then the set 
$$Z:=\{z\in D: 0\in \mathrm{Spec}(T(z))\}$$
is discrete in $D$, and $\lambda\mapsto T(\lambda)^{-1}$ exists as a meromorphic function in $D$. Moreover, near any $z\in Z$, $T^{-1}$ has a pointwise norm-convergent Laurent expansion 
\begin{equation}
\label{laurentexp}
T(\lambda)^{-1}=\sum_{k=-N}^\infty T_k(\lambda-z)^k,
\end{equation}
where $T_k$ are finite rank operators for $k<0$.
\end{lem}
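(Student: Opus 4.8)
The plan is to deduce Lemma \ref{merofamily} from the classical analytic Fredholm theorem, which applies to holomorphic families of the form $1 + (\text{compact})$, by reducing the general Fredholm-valued family to that situation using a parametrix and the connectedness of $D$. First I would fix $\lambda_0 \in D$ with $T(\lambda_0)$ invertible, set $S_0 := T(\lambda_0)^{-1} \in \mathbb{B}(\mathcal{H}_2,\mathcal{H}_1)$, and consider the holomorphic family $G(\lambda) := S_0 T(\lambda) \in \mathbb{B}(\mathcal{H}_1)$. Since $T(\lambda)$ is Fredholm for every $\lambda \in D$, so is $G(\lambda)$, and $G(\lambda_0) = 1$. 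Because the Fredholm operators of index zero form an open set and $D$ is connected, $G(\lambda)$ has index zero throughout $D$; moreover one can write $G(\lambda) = 1 - K(\lambda) + F(\lambda)$ where $K(\lambda)$ is holomorphic with values in the compact operators and $F(\lambda)$ is a fixed finite-rank correction — concretely, let $P$ be the (finite-rank) orthogonal projection onto a complement of $\mathrm{ran}\, G(\lambda_0) = \mathcal{H}_1$; since that is everything here, one simply notes $G(\lambda) - G(\lambda_0) = G(\lambda) - 1$ need not be compact, so instead I would argue directly: a Fredholm operator of index zero is the sum of an invertible operator and a finite-rank operator, and this decomposition can be made to depend holomorphically on the parameter locally, hence (by a covering argument on $D$) one reduces to a family of the form $B(\lambda)(1 - K(\lambda))$ with $B(\lambda)$ invertible holomorphic and $K(\lambda)$ holomorphic compact-operator-valued.

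Next I would invoke the analytic Fredholm theorem for $1 - K(\lambda)$ (Steinberg's theorem, or e.g.\ \cite[Proposition 1.1.8]{leschhab} in the form cited): since $1 - K(\lambda_0)$ is invertible (as $G(\lambda_0) = 1$), the set of $\lambda$ where $1 - K(\lambda)$ is not invertible is discrete in $D$, and $(1-K(\lambda))^{-1}$ is meromorphic with finite-rank principal parts. Composing with the inverse of the invertible holomorphic factor $B(\lambda)$ and with $S_0$ from the left shows that $T(\lambda)^{-1} = G(\lambda)^{-1} S_0$ exists as a meromorphic family on $D$, and $Z = \{z \in D : 0 \in \mathrm{Spec}(T(z))\}$ — which coincides with the non-invertibility locus of $1 - K(\lambda)$ — is discrete. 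The Laurent expansion \eqref{laurentexp} with finite-rank coefficients $T_k$ for $k < 0$ is inherited from the corresponding expansion of $(1-K(\lambda))^{-1}$, since left/right multiplication by the holomorphic invertible families $B(\lambda)^{-1}$ and $S_0$ preserves both meromorphy and the finite-rank nature of the negative Laurent coefficients.

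The main obstacle is the local holomorphic decomposition of a Fredholm family into an invertible part plus a compact (indeed finite-rank) perturbation in a way that patches globally over $D$. One clean route avoiding any delicate patching: near a fixed $\lambda_1 \in D$, the operator $G(\lambda_1)$ being index-zero Fredholm admits a finite-rank projection $\Pi_1$ onto $\ker G(\lambda_1)$ and a finite-rank projection $\Pi_1'$ onto a complement of $\mathrm{ran}\, G(\lambda_1)$, and the family $\lambda \mapsto G(\lambda) + \Pi_1' (\,\cdot\,) \Pi_1$ (a fixed finite-rank correction) is invertible at $\lambda_1$, hence invertible and holomorphic in a neighbourhood; inverting it there and noting the correction is finite rank exhibits $G(\lambda)^{-1}$ locally as meromorphic via the standard rank-one resolvent identity argument. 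Since $D$ is connected and these local meromorphic continuations agree on overlaps (they all equal the genuine inverse wherever $G(\lambda)$ is invertible, and such points are dense by the discreteness established locally), they glue to a global meromorphic family on $D$. Tracking the finite-rank correction through the inversion confirms that the negative Laurent coefficients are finite rank, completing the argument; as this is a known result, I would simply cite \cite[Proposition 1.1.8]{leschhab} and indicate that the finite-rank structure of $T_k$ follows from the rank-one perturbation form of the analytic Fredholm theorem.
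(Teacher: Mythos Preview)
The paper does not actually prove this lemma: it states that the result is well known and defers entirely to \cite[Proposition 1.1.8]{leschhab}. Your proposal therefore goes beyond what the paper does, sketching the standard reduction (via a local finite-rank correction of an index-zero Fredholm family to an invertible-times-$(1-\text{compact})$ factorization, followed by the classical analytic Fredholm theorem and a patching argument over the connected domain). The outline is correct; the one place that is a bit meandering is the first attempt to write $G(\lambda)=1-K(\lambda)+F(\lambda)$ globally, which you rightly abandon since $G(\lambda)-1$ need not be compact. The second, local argument you give (adding a fixed finite-rank correction near each $\lambda_1$ to make the family invertible there, inverting, and gluing by uniqueness of the inverse on overlaps) is the standard and correct route, and it yields the finite-rank principal parts as claimed. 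Since you conclude by citing the same reference the paper invokes, your treatment is entirely consistent with---and more detailed than---the paper's own handling of this lemma.
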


We now apply Lemma \ref{merofamily} to the Dirichlet-to-Neumann operator. By Theorem \ref{lambdaisps}, $\Lambda(R)=\mathbb{A}_{+-}(R)^{-1}(1-\mathbb{A}_{++}(R))$ for $\mathrm{Re}(R)$ large enough. Indeed, by Corollary \ref{invertingaplumi}, $\mathbb{A}_{+-}(R)$ is invertible for $\mathrm{Re}(R)$ large enough. It now follows from Lemma \ref{thickaholo} and Lemma \ref{merofamily} that $\mathbb{A}_{+-}(R)^{-1}$ defines a meromorphic function in $\C$. Since $R(1-\mathbb{A}_{++}(R))$ is holomorphic by Lemma \ref{thickaholo}, we  have shown the following theorem. 

\begin{thm}
\label{dtonmero}
The operator $R\mapsto \Lambda(R)=\mathbb{A}_{+-}(R)^{-1} (1-\mathbb{A}_{++}(R))\in \mathbb{B}(\mathcal{H}_+,\mathcal{H}_-)$ admits a meromorphic continuation from $\Gamma_{\frac{\pi}{n+1}}$ to $\C$. 
\end{thm}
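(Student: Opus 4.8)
The plan is to assemble Theorem \ref{dtonmero} directly from the three preceding ingredients: the holomorphy statement of Lemma \ref{thickaholo}, the invertibility of $\mathbb{A}_{+-}(R)$ for $\mathrm{Re}(R)$ large (Corollary \ref{invertingaplumi}), and the abstract meromorphic Fredholm theorem Lemma \ref{merofamily}. First I would observe that Lemma \ref{thickaholo} gives that $R\mapsto R\mathbb{A}(R)$ is holomorphic on all of $\C$ in the norm topology, and hence so is each block $R\mathbb{A}_{+-}(R):\mathcal{H}_-\to\mathcal{H}_+$ (after the appropriate identification of summands) and $R(1-\mathbb{A}_{++}(R)):\mathcal{H}_+\to\mathcal{H}_+$.

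Next I would check that $R\mathbb{A}_{+-}(R)$ is a Fredholm-valued family. This is where Lemma \ref{calderprojlemacomp} enters: for each fixed $R\in\C\setminus\{0\}$, $\mathbb{A}_{+-}(R)$ is an elliptic pseudo-differential operator between the graded Sobolev spaces $\mathcal{H}_+$ and (a relabelling of) $\mathcal{H}_-$ of the Agmon-Douglis-Nirenberg type, hence Fredholm by elliptic theory on the closed manifold $\partial X$; multiplication by the scalar $R$ changes nothing for $R\neq 0$, and at $R=0$ one has the zero operator, which in the present setting is harmless to exclude or to handle separately. Thus $R\mapsto R\mathbb{A}_{+-}(R)$ is a holomorphic map from the connected domain $D=\C$ (or $\C\setminus\{0\}$) into $\mathrm{Fred}(\mathcal{H}_-,\mathcal{H}_+)$. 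By Corollary \ref{invertingaplumi} there is a point $\lambda_0\in\Gamma_{\frac{\pi}{n+1}}$ with $\mathrm{Re}(\lambda_0)$ large at which $\mathbb{A}_{+-}(\lambda_0)$, and therefore $\lambda_0\mathbb{A}_{+-}(\lambda_0)$, is invertible. Lemma \ref{merofamily} then yields that $(R\mathbb{A}_{+-}(R))^{-1}$, and hence $\mathbb{A}_{+-}(R)^{-1}$, exists as a meromorphic $\mathbb{B}(\mathcal{H}_+,\mathcal{H}_-)$-valued function on $\C$, with a norm-convergent Laurent expansion \eqref{laurentexp} near each pole and finite-rank principal parts.

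Finally I would combine the pieces: on $\Gamma_{\frac{\pi}{n+1}}$ with $\mathrm{Re}(R)$ large, Theorem \ref{lambdaisps} gives $\Lambda(R)=\mathbb{A}_{+-}(R)^{-1}(1-\mathbb{A}_{++}(R))$; the right-hand side is the product of the meromorphic function $\mathbb{A}_{+-}(R)^{-1}$ and the holomorphic function $1-\mathbb{A}_{++}(R)=R^{-1}\cdot R(1-\mathbb{A}_{++}(R))$ (meromorphic, with at worst a pole at $0$ coming from the $R^{-1}$ factor, which again lies in the excluded set $\{0\}$). A product of meromorphic operator-valued functions is meromorphic, so $R\mapsto\mathbb{A}_{+-}(R)^{-1}(1-\mathbb{A}_{++}(R))$ is a meromorphic function on $\C$ that agrees with $\Lambda(R)$ on an open subset of $\Gamma_{\frac{\pi}{n+1}}$, hence on all of $\Gamma_{\frac{\pi}{n+1}}$ by unique continuation of meromorphic functions on the connected domain. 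This furnishes the claimed meromorphic continuation.

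The only genuinely delicate point is the Fredholm property of the family $R\mapsto R\mathbb{A}_{+-}(R)$ on the correct pair of graded Hilbert spaces and the verification that the set where invertibility fails is not all of $\C$ — both of which are already supplied, respectively, by Lemma \ref{calderprojlemacomp} together with the ADN elliptic theory of \cite{Grubb71}, and by Corollary \ref{invertingaplumi}. Everything else is a routine concatenation: holomorphy $\Rightarrow$ Fredholm-valued holomorphic family with an invertible point $\Rightarrow$ meromorphic inverse $\Rightarrow$ meromorphic product. I would present the argument essentially as the short paragraph preceding the theorem statement already does, spelled out with explicit invocations of Lemmas \ref{thickaholo}, \ref{calderprojlemacomp}, \ref{merofamily} and Corollaries/Theorems \ref{invertingaplumi}, \ref{lambdaisps}.
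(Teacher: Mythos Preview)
Your argument is correct and follows essentially the same route as the paper: holomorphy of $R\mathbb{A}(R)$ from Lemma \ref{thickaholo}, invertibility of $\mathbb{A}_{+-}$ at some large real $R$ from Corollary \ref{invertingaplumi}, the meromorphic Fredholm theorem (Lemma \ref{merofamily}) to invert $\mathbb{A}_{+-}$ meromorphically, and then multiplication by the holomorphic factor $R(1-\mathbb{A}_{++}(R))$ together with Theorem \ref{lambdaisps}. The only difference is that you make explicit the Fredholm hypothesis required by Lemma \ref{merofamily} via Lemma \ref{calderprojlemacomp}, which the paper leaves implicit.
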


\begin{remark}
From elliptic regularity on the closed manifold $\partial X$, Lemma \ref{calderprojlemacomp} and Lemma \ref{thickaholo} one concludes that $\Lambda(R)$ admits an expansion near $R_0\in \C\setminus \{0\}$ as in Equation \eqref{laurentexp}, where $T_k$ are smoothing operators for $k<0$.
\end{remark}

\begin{deef}
\label{scattpo}
The set of poles of $\Lambda(R)$ is denoted by $\mathrm{P}_\Lambda(X)$. 
\end{deef}

\begin{prop}
For any $\alpha\in [0,\pi/2)$, there is a constant $C_{\alpha,X}\geq 0$, such that 
$$\mathrm{P}_\Lambda(X)\subseteq \{R\in \C\setminus \Gamma_{\frac{\pi}{n+1}}: \mathrm{Re}(R)< C_{\alpha,X} \;\mbox{if $R\in \Gamma_\alpha$}\}.$$
\end{prop}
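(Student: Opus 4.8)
The plan is to combine the two parts of Theorem \ref{mainthm}b) that are already in our hands: on the one hand, $\Lambda(R)$ is holomorphic on the sector $\Gamma_{\frac{\pi}{n+1}}$ by Proposition \ref{uniquesolu}, which exhibits $\Lambda(R)$ as the solution operator of the coercive boundary value problem there, so $\mathrm{P}_\Lambda(X)\cap\Gamma_{\frac{\pi}{n+1}}=\emptyset$; on the other hand, for a sector $\Gamma_\alpha$ with $\alpha\in[0,\pi/2)$, Corollary \ref{invertingaplumi} produces a constant $C_{\Gamma_\alpha}\geq 0$ such that $\mathbb{A}_{+-}(R)$, $\mathbb{A}_{++}(R)$, etc.\ are all invertible for $R\in\Gamma_\alpha$ with $\mathrm{Re}(R)\geq C_{\Gamma_\alpha}$. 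By Theorem \ref{lambdaisps} (together with Theorem \ref{dtonmero}, which says the meromorphic continuation of $\Lambda$ agrees with $\mathbb{A}_{+-}(R)^{-1}(1-\mathbb{A}_{++}(R))$ wherever the right-hand side is defined) this forces $\Lambda(R)$ to be holomorphic at every such $R$. Setting $C_{\alpha,X}:=C_{\Gamma_\alpha}$ then gives exactly the claimed inclusion.

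Concretely, I would first record that $\mathrm{P}_\Lambda(X)\subseteq\C\setminus\Gamma_{\frac{\pi}{n+1}}$: by Proposition \ref{uniquesolu} the boundary value problem \eqref{thebvpgener} has a unique solution $u\in H^{2m}(\Omega)$ for every $R\in\Gamma_{\frac{\pi}{n+1}}$ and every Dirichlet datum in $\mathcal{H}_+$, and the solution depends holomorphically on $R$ by the coercivity estimate for the form $\int_{\R^n}(R^2+|\xi|^2)^m|\hat v(\xi)|^2\,\rd\xi$ when $\mathrm{arg}(R^{2m})<\pi$; hence $R\mapsto\Lambda(R)=r\circ(\text{solution map})$ is holomorphic on $\Gamma_{\frac{\pi}{n+1}}$ and has no poles there. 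Next, fix $\alpha\in[0,\pi/2)$ and apply Corollary \ref{invertingaplumi} to the sector $\Gamma=\Gamma_\alpha\subseteq\C_+$, obtaining $C_{\alpha,X}:=C_{\Gamma_\alpha}$ such that $\mathbb{A}_{+-}(R)$ is invertible whenever $R\in\Gamma_\alpha$ and $\mathrm{Re}(R)\geq C_{\alpha,X}$. For such $R$ the operator $\mathbb{A}_{+-}(R)^{-1}(1-\mathbb{A}_{++}(R))$ is a well-defined bounded operator $\mathcal{H}_+\to\mathcal{H}_-$, and by Theorem \ref{dtonmero} it coincides with the meromorphic continuation of $\Lambda(R)$; in particular $\Lambda$ is holomorphic there, so no pole lies in $\{R\in\Gamma_\alpha:\mathrm{Re}(R)\geq C_{\alpha,X}\}$. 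Combining the two observations yields $\mathrm{P}_\Lambda(X)\subseteq\{R\in\C\setminus\Gamma_{\frac{\pi}{n+1}}:\mathrm{Re}(R)<C_{\alpha,X}\text{ if }R\in\Gamma_\alpha\}$.

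One small subtlety to address carefully: Theorem \ref{lambdaisps} only asserts the identity $\Lambda(R)=\mathbb{A}_{+-}(R)^{-1}(1-\mathbb{A}_{++}(R))$ for $R$ with $\mathrm{Re}(R)$ large, whereas here I need it on the possibly larger region $\{R\in\Gamma_\alpha:\mathrm{Re}(R)\geq C_{\alpha,X}\}$. This is handled by analytic continuation: both sides are meromorphic on $\C$ (the left by Theorem \ref{dtonmero}, the right by Lemma \ref{merofamily} applied to the holomorphic Fredholm family $R\mapsto R\mathbb{A}_{+-}(R)$ from Lemma \ref{thickaholo}) and they agree on the nonempty open set where $\mathrm{Re}(R)$ is large, hence everywhere that the right-hand side is holomorphic; and Corollary \ref{invertingaplumi} guarantees exactly that the right-hand side is holomorphic on $\{R\in\Gamma_\alpha:\mathrm{Re}(R)\geq C_{\alpha,X}\}$.

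I do not expect a serious obstacle here — the statement is essentially a bookkeeping corollary of Proposition \ref{uniquesolu}, Corollary \ref{invertingaplumi}, and Theorem \ref{dtonmero}. The only point requiring a line of care is the one just noted, namely that the invertibility region of $\mathbb{A}_{+-}$ furnished by the parametrix-plus-Neumann-series argument of Corollary \ref{invertingaplumi} is genuinely of the form ``$\Gamma_\alpha$ intersected with a half-plane $\mathrm{Re}(R)\geq C_{\alpha,X}$'', which is precisely how that corollary is phrased; so no additional work is needed beyond invoking it with $\Gamma=\Gamma_\alpha$.
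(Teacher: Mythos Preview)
Your proposal is correct and follows essentially the same approach as the paper: use Proposition \ref{uniquesolu} to rule out poles in $\Gamma_{\frac{\pi}{n+1}}$, and then invoke Corollary \ref{invertingaplumi} (applied with $\Gamma=\Gamma_\alpha$) together with the identity $\Lambda(R)=\mathbb{A}_{+-}(R)^{-1}(1-\mathbb{A}_{++}(R))$ from Theorem \ref{dtonmero} to exclude poles with large real part in $\Gamma_\alpha$. Your extra paragraph handling the analytic continuation of the identity is a careful justification of a step the paper leaves implicit, but otherwise the arguments coincide.
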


\begin{proof}
If $R\in \mathrm{P}_\Lambda(X)\cap \Gamma_{\frac{\pi}{n+1}}$, then $\Lambda(R)$ does not exist, while the Barcel\'o-Carbery boundary value problem \eqref{thebvpgener} has a unique weak solution by Proposition \ref{uniquesolu}. This is a contradiction. The proof is complete upon noting that $\mathrm{P}_\Lambda(X) \subset \{R \in \C : R=0 \mbox{   or    }\mathbb{A}_{+-}(R) \text{ not invertible}\}$ and $\mathbb{A}_{+-}(R)$ is invertible for large $\mathrm{Re}(R)$ by Corollary \ref{invertingaplumi}.
\end{proof}

\begin{thm}
The function $\mathcal{M}_X(R):=\textnormal{mag}(R\cdot X)$ has a meromorphic continuation from $R> 0$ to $R\in \C$ with poles being of at most finite order. Its set of poles $\mathrm{P}_{\textnormal{Mag}}(X)$ is invariant under complex conjugation and for any $\alpha\in [0,\pi/2)$, there is a $C_{X,\alpha}\geq 0$ such that $\mathrm{P}_{\textnormal{Mag}}(X)$ is contained in $\mathrm{P}_{\Lambda}(X)\setminus\{0\}\subset \{R\in \C\setminus \Gamma_{\frac{\pi}{n+1}}: \mathrm{Re}(R)< C_{\alpha,X} \;\mbox{if $R\in \Gamma_\alpha$}\}$.
\end{thm}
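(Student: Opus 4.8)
The plan is to read off the meromorphic continuation of $\mathcal{M}_X$ directly from the formula for $\mathrm{mag}(R\cdot X)$ in Proposition \ref{compformag}, namely
\begin{equation*}
\mathcal{M}_X(R)=\frac{\textnormal{vol}_n(X)}{n!\omega_n}R^n+\frac{1}{n!\omega_n}\sum_{\frac{m}{2}<j\leq m} R^{n-2j}\int_{\partial X}\mathcal{D}^{2j-1}_R h_R\,\rd S,
\end{equation*}
valid a priori for $R\in\Gamma_{\pi/(n+1)}\cap\R_+$. First I would express the boundary data $(\mathcal{D}^{2j-1}_R h_R)_j$ in terms of the Dirichlet-to-Neumann operator: by definition of $h_R$ in Proposition \ref{compformag}, the Dirichlet part of its Cauchy data is the fixed tuple $(R^j)_{j\text{ even}}$ (and $0$ for $j$ odd), so $\mathcal{D}^{2j-1}_R h_R = [\Lambda(R)(R^l\delta_{l\text{ even}})_{l}]_{2j-1}$, which is exactly what is already recorded in Equation \eqref{expandinmagindn}. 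Thus $\mathcal{M}_X(R)$ is a finite sum of terms of the form $R^{n-2j+2l}\langle \Lambda_{2j-1,2l}(R)1,1\rangle_{L^2(\partial X)}$ plus the polynomial volume term.

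Next I would invoke Theorem \ref{dtonmero}, which gives that $R\mapsto\Lambda(R)\in\mathbb{B}(\mathcal{H}_+,\mathcal{H}_-)$ is meromorphic on all of $\C$, with poles of finite order (each block entry $\Lambda_{j,l+m}$ therefore meromorphic as an operator $H^s(\partial X)\to H^{s'}(\partial X)$). Since $1\in C^\infty(\partial X)$ lies in every Sobolev space, each pairing $R\mapsto\langle\Lambda_{2j-1,2l}(R)1,1\rangle_{L^2(\partial X)}$ is a meromorphic scalar function on $\C$ with poles of finite order; multiplying by the entire function $R^{n-2j+2l}$ and adding the polynomial term $\tfrac{\textnormal{vol}_n(X)}{n!\omega_n}R^n$ preserves meromorphy. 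This meromorphic function agrees with $\mathcal{M}_X$ on $\Gamma_{\pi/(n+1)}\cap\R_+$ by Proposition \ref{compformag}, hence is \emph{the} meromorphic continuation, and its poles are contained in $\mathrm{P}_\Lambda(X)$. The point $R=0$ must be excluded and handled separately: near $R=0$ the leading term $\tfrac{\textnormal{vol}_n(X)}{n!\omega_n}R^n$ vanishes and one checks, using the explicit layer-potential kernel $K(R;z)=\tfrac{\kappa_n}{R}\mathrm e^{-R|z|}$ together with Proposition \ref{compformag}, that $\mathcal{M}_X$ is regular there (indeed $\mathcal{M}_X(0)$ records the $R\to 0$ limit, which is finite), so $0\notin\mathrm{P}_{\textnormal{Mag}}(X)$; this gives the stated inclusion $\mathrm{P}_{\textnormal{Mag}}(X)\subseteq\mathrm{P}_\Lambda(X)\setminus\{0\}$, and combining with the previous Proposition bounding $\mathrm{P}_\Lambda(X)$ yields the localization in the sector statement.

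For invariance of $\mathrm{P}_{\textnormal{Mag}}(X)$ under complex conjugation, I would argue that $\mathcal{M}_X(\overline R)=\overline{\mathcal{M}_X(R)}$: this holds on the real half-line trivially since $\mathrm{mag}(R\cdot X)\in\R$ for $R>0$, and then propagates to the whole plane by the identity theorem for meromorphic functions (the function $R\mapsto\overline{\mathcal{M}_X(\overline R)}$ is meromorphic and agrees with $\mathcal{M}_X$ on $\R_+$). Hence $R_0$ is a pole iff $\overline{R_0}$ is. Alternatively one can see this structurally: the kernel $K(R;z)$ satisfies $\overline{K(\overline R;z)}=K(R;z)$, so $\overline{h_{\overline R}}=h_R$ and the integral formula is real-symmetric under conjugation.

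The main obstacle I anticipate is the behavior at $R=0$: the layer potential kernel $K(R;z)=\tfrac{\kappa_n}{R}\mathrm e^{-R|z|}$ has an explicit $\tfrac1R$ singularity, so one must verify that the apparent pole cancels in the combination defining $\mathcal{M}_X$ — this requires expanding $\mathrm e^{-R|x-y|}$ in $R$ and using that the relevant combination $\sum_{m/2<j\le m}(-1)^j\binom{m}{j}$-type coefficients annihilate the lowest-order terms, exactly as in the classical computation that $K$ is a genuine (non-singular as $R\to0$) fundamental solution contribution to the magnitude. A clean alternative is to note that $\mathcal{M}_X$ is \emph{defined} and finite as $\mathrm{mag}(R\cdot X)$ for all $R>0$ including $R\to0^+$ (where $R\cdot X$ degenerates to a point, of magnitude $1$), so continuity of magnitude under this limit — or the explicit formula \eqref{magsobolev} — forces regularity at $0$; I would use whichever of these is cleanest given what has been established, most likely citing the $R=0$ value directly from the Barcel\'o-Carbery setup.
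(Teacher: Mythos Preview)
Your proposal is correct and follows essentially the same route as the paper: express $\mathcal{M}_X$ via Equation \eqref{expandinmagindn} as a finite sum of pairings $\langle\Lambda_{2j-1,2l}(R)1,1\rangle$, invoke the meromorphic continuation of $\Lambda$ from Theorem \ref{dtonmero}, handle $R=0$ by citing the known limit $\mathcal{M}_X(R)\to 1$ from \cite{barcarbs}, and deduce conjugation symmetry. Two minor remarks: the factor $R^{n-2j+2l}$ is not always entire (for $j=m$, $l=0$ it is $R^{-1}$), but this only adds a possible pole at $0$, which you already treat; and your Schwarz-reflection argument for $\overline{\mathcal{M}_X(R)}=\mathcal{M}_X(\overline R)$ (real on $\R_+$, then identity theorem) is a slight streamlining of the paper's version, which instead verifies $\overline{\Lambda(R)}=\Lambda(\overline R)$ directly from the boundary value problem.
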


\begin{proof}
We define $\mathrm{e}_j\in \mathcal{H}$ as the $j$-th basis vector, defining a constant function on $\partial X$. By Equation \eqref{expandinmagindn}, 
$$\mathcal{M}_X(R)=\frac{\textnormal{vol}_n(X)}{n!\omega_n}R^n+\frac{1}{n!\omega_n}\sum_{\frac{m}{2}<j\leq m}\sum_{0\leq l<m/2} R^{n-2j+2l}\langle \Lambda(R)\mathrm{e}_{2l},\mathrm{e}_{2j-1}\rangle_{L^2(\partial X,\C^{2m})}.$$
The right hand side depends meromorphically on $R\in \C$ by Theorem \ref{dtonmero}, and its poles are of finite order by Lemma \ref{thickaholo} and \ref{merofamily}. It is holomorphic outside $\mathrm{P}_\Lambda(X)\subseteq \{R\in \C\setminus \Gamma_{\frac{\pi}{n+1}}: \mathrm{Re}(R)< C_{\alpha,X} \;\mbox{if $R\in \Gamma_\alpha$}\}$.

From the meromorphic continuation and $\mathrm{lim}_{R\to 0^+} \mathcal{M}_X(R) = 1$ (see \cite[Theorem 1]{barcarbs}), one sees that $\mathcal{M}_X$ is holomorphic near $R=0$.

For the invariance under complex conjugation, note that the identity $\overline{\Lambda(R)} = \Lambda(\overline{R})$ follows from the Barcel\'o-Carbery problem \eqref{thebvpgener} for $R \in \Gamma_{\frac{\pi}{n+1}}$. Hence, $\overline{\Lambda(R)} = \Lambda(\overline{R})$ holds for all $R \not \in \mathrm{P}_{\Lambda}(X)$. We conclude $\overline{\mathcal{M}_X(R)} = \mathcal{M}_X(\overline{R})$ for all $R \not \in \mathrm{P}_{\Lambda}(X)$, and therefore $\overline{\mathrm{P}_{\textnormal{Mag}}(X)} = \mathrm{P}_{\textnormal{Mag}}(X)$.
\end{proof} 

Note that for $R \in \C_+$, the meromorphic extension $\mathcal{M}_X(R)$ may be computed from the boundary problem \eqref{thebvpgener}, or equivalently its reformulation \eqref{expandinmagindn} in terms of the Dirichlet-to-Neumann operator.

\begin{figure}
   \centering
   \subfigure[]{
   \includegraphics[height=5.35cm]{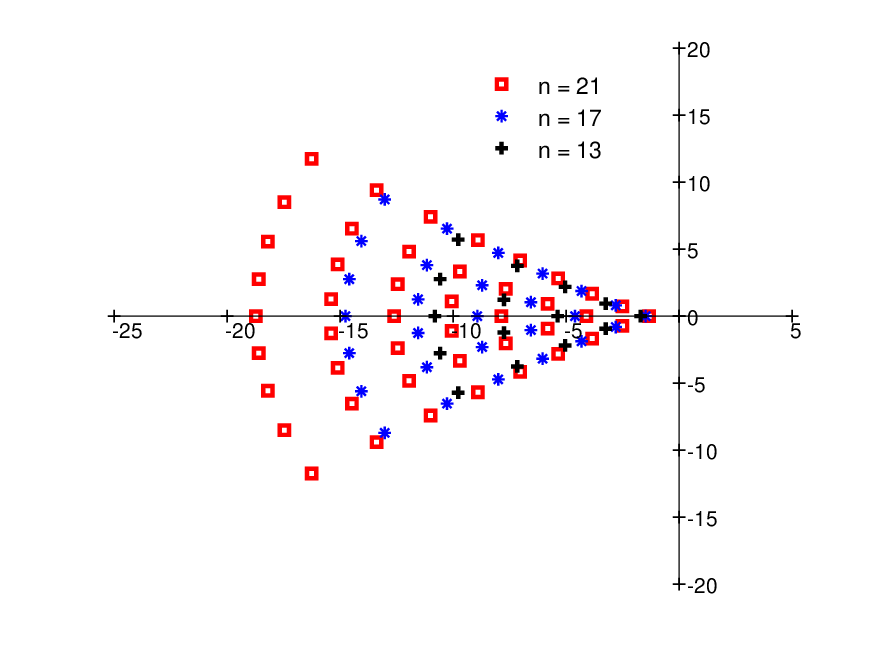}}
 \subfigure[]{
  \includegraphics[height=5.35cm]{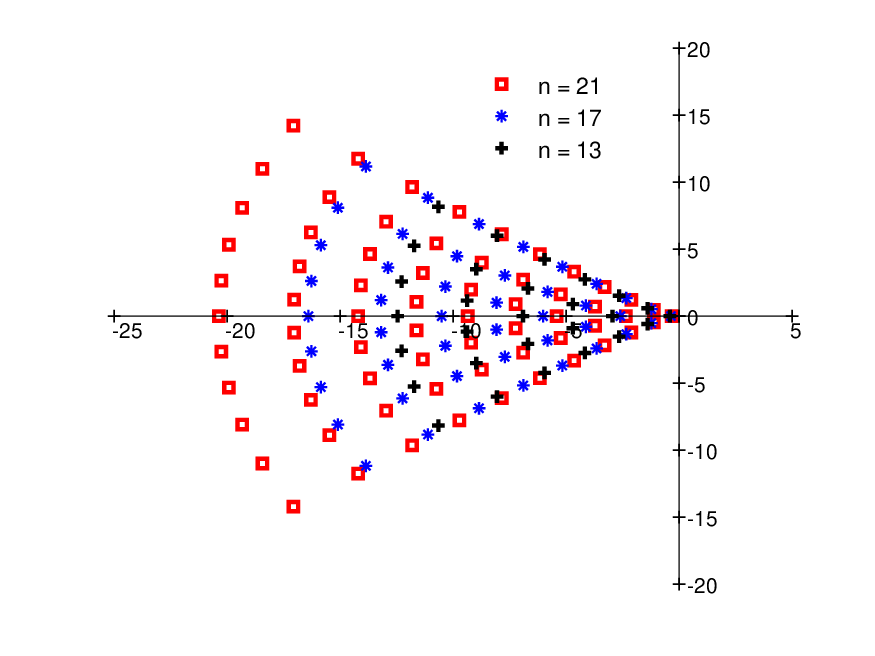}}
  \caption{The set of (a) poles and (b) zeros of $\mathcal{M}_{B_n}$ in dimensions $n = 13, 17, 21$.}
\label{plot}
\end{figure}

\begin{example}
\label{examplecompact}
Let $\Omega = \R \setminus X$ be disconnected with a bounded component $\Omega_0$. The fact that $L^2(\Omega_0)$ is spanned by eigenfunctions of the Dirichlet Laplacian on $\Omega_0$ shows that the boundary value problem \eqref{thebvpgener} lacks a unique weak solution for up to infinitely many $R$ on the imaginary axis. The eigenvalues of the Dirichlet Laplacian on $\Omega_0$ form an infinite set of poles of $\Lambda(R)$ and $\mathcal{M}_X$ on the imaginary axis $i \R$. Also for generic domains $X$, even convex ones, the construction of the Dirichlet-to-Neumann operator indicates that it would likely have an infinite set of poles $\mathrm{P}_{\Lambda}(X)$.

\end{example}

\begin{example}
\label{exampleBn}
In the special case of the unit ball $X=B_n$, using representation theory or Willerton's formulas \cite{will} one can show that $\mathcal{M}_{B_n}$ is a rational function. The number of poles is at most $\frac{(n-1)(n-3)}{8}$, while the number of zeros is bounded by $\frac{(n+3)(n+1)}{8}$. As an example of the structure apparent in $\mathcal{M}_{B_n}$, based on the formulas in \cite{will}  and high-precision numerical calculations, Figure \ref{plot} depicts the set of poles $\mathrm{P}_{\textnormal{Mag}}(B_n)$ as well as the set of zeros for dimensions $n = 13, 17, 21$.  
\end{example}

\begin{example}
\label{exampleShell}
For $X=(2B_3)\setminus B_3^\circ$ -- a spherical shell in $\R^3$ -- the magnitude function $\mathcal{M}_X$ is not rational and contains an infinite sequence of poles accumulating on the curve $\mathrm{Re}(R)= \log(|\mathrm{Im}(R)|)$, see Figure \ref{shellfigure}. Indeed, the magnitude function of $X$ is given by
$$\mathcal{M}_X(R)=\mathcal{M}_{B_3}(2R)-\frac{1}{3! \omega_3} \mathrm{vol}_3(B_3)\ R^3-\frac{1}{3! \omega_3 R}\int_{S^2}\mathcal{D}^{3}_R h_R\,\rd S,$$
where $h_R$ solves the Barcel\'{o}-Carbery problem in the ball $B_3$. Using spherical symmetry, $h_R(x)=v_R(|x|)$ where $v_R$ solves $(R^2-\partial_r^2-2r^{-1}\partial_r)^2v_R=0$ in $[0,1]$ and satisfies the boundary conditions $v_R(1)=1$ and $v_R'(1)=0$. One can compute $\mathcal{D}^{3}_R h_R=v_R'''(1)+2v_R''(1)$ and obtain the formula
$$ \mathcal{M}_X(R)=\frac{7}{6}R^3+5R^2+2R+2+ \frac{\mathrm{e}^{-2R}(R^2+1)+2R^3-3R^2+2R-1}{\sinh(2R)-2R}.$$
\end{example}

\begin{figure}
   \centering
   \includegraphics[height=5.35cm]{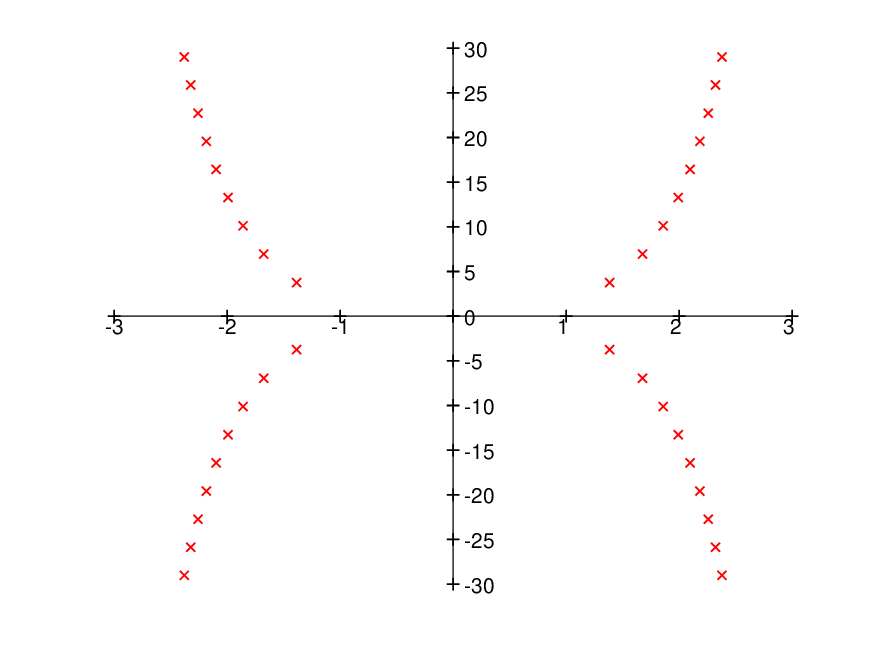}
  \caption{The set of poles  $\mathcal{M}_{X}$ for the spherical shell $X=(2B_3)\setminus B_3^\circ$.}
\label{shellfigure}
\end{figure}

\subsection{Scattering resonances and the magnitude}
\label{scattpolesubs}

The results of Subsection \ref{meroextsubs} may be interpreted in the context of scattering theory. There one studies the  exterior problem of second order
\begin{equation*}
(R^2-\Delta) u=0\quad\mbox{in $\Omega$} ,
\end{equation*}
with Dirichlet or Neumann boundary conditions at $\partial\Omega = \partial X$ \cite{melrose}. Similar to the setting in this article, the Dirichlet-to-Neumann operator extends meromorphically to $\C$, where the poles with negative real part are called \emph{scattering resonances}. In this way, the points in $\mathrm{P}_{\Lambda}(X)$ with negative real part are analogous to the scattering resonances for the Barcel\'o-Carbery boundary problem \eqref{thebvpgener} of order $2m$. In addition, the Barcel\'o-Carbery boundary problem allows at most a finite number of poles with positive real part in any sector $\Gamma \subset \C_+$.

Our approach therefore provides intuition and techniques to study the meromorphic function $\mathcal{M}_X$ based on scattering theory for a parameter dependent higher-order operator. A typical result shows that for a generic domain $X$ there are infinitely many scattering resonances, which intersect any sector $\Lambda \supset \C_+$ of opening angle $< \pi$. Indeed, for convex $X$, it is known that the number of scattering resonances in $\{R \in \C: -\mathrm{Re}(R)\lesssim|\mathrm{Im}(R)|^c\}$ is finite for a $0<c<1$, while for non-convex $X$ scattering resonances may be logarithmically close to $i \R$ \cite{melrose}, { as is the case for the magnitude function of the shell domain in Example \ref{exampleShell}}. This gives an indication beyond Section \ref{meroextsubs} that $\mathrm{P}_{\Lambda}(X)$, and possibly also $\mathrm{P}_{\textnormal{Mag}}(X)$, would be infinite for a large class of domains.

\begin{appendix}
\large
\section{Pseudodifferential operators with parameters} 
\label{apppseudo}
\normalsize

In this appendix we give a brief introduction to pseudo-differential operators with parameters. We also prove Theorem \ref{theboundaryops} below in Subsection \ref{theorem9} and Lemma \ref{condexplem} in Subsection \ref{lemma16}. 

\subsection{Brief introduction to pseudo-differential operators with parameters}
\label{briefintropara}

The theory of pseudo-differential operators with parameters can be developed in the same way as the standard theory for pseudo-differential operators by treating the parameter as an additional covariable. We refer the reader to \cite[Chapter 2]{GrubbGreenBook} or \cite[Chapter 1.7]{gilkeyinvariance} for more details, and \cite[Chapter XVIII]{hormanderIII} for the theory of pseudo-differential operators.

We fix a closed $n$-dimensional manifold $M$ and a sector $\Gamma\subseteq \C$. Following the introduction in the paper, we denote the parameter by $R\in \Gamma$. The procedure of defining pseudo-differential operators from symbols in local charts is a well known procedure, see for instance \cite[Page 83-86]{hormanderIII} or \cite[Chapter 8]{GrubbDistOp}, and we leave out this aspect working directly on a manifold using local charts. We write $(x,\xi)\in T^*M$ for an element of the cotangent bundle, and by an abuse of notation we often implicitly identify $\xi\in T_x^*M$ with a covector in $\R^n$ via an unspecified local chart. We only treat the case of scalar operators, the matrix-valued case works analogously.

\begin{deef}
A \emph{symbol with parameter} of order $s$ is a function $a\in C^\infty(T^*M\times \Gamma)$ such that in every local chart $U\subseteq M$ the following holds: For any $\alpha,\beta\in \N^n$ and $k\in \N$, there is a constant $C=C(\alpha,\beta,k)$ such that in local coordinates
$$|\partial_x^\alpha\partial_\xi^\beta\partial_R^ka(x,\xi,R)|\leq C(1+|\xi|^2+|R|^2)^{(s-|\beta|-k)/2}, \quad (x,\xi,R)\in T^*U\times \Gamma.$$
The space of symbols of order $s$ forms a Fr\'echet space $S^s(M;\Gamma)$. 
\end{deef}

Note that $S^{-\infty}(M;\Gamma):=\cap_{s\in \R} S^s(M;\Gamma)=\mathcal{S}(T^*M\times \Gamma)$ -- the Schwartz space of $T^*M\times \Gamma$. We write $a_1\sim a_2$ if $a_1-a_2\in S^{-\infty}(M;\Gamma)$. The space of symbols $S^*(M;\Gamma)$ is asymptotically complete in the sense that when $a_{s_j}\in S^{s_j}(M;\Gamma)$, $j\in \N$, and $s_j\to -\infty$ the sum $\sum_j a_{s_j}\in S^s(M;\Gamma)$, for $s=\max_j s_j$, can be defined uniquely modulo $S^{-\infty}(M;\Gamma)$ (see \cite[Proposition 18.1.3]{hormanderIII}). 

The operator $Op(a)$ defined from $a\in S^s(M;\Gamma)$ is a family of operators $(Op(a)_R:C^\infty(M)\to C^\infty(M))_{R\in \Gamma}$. It is defined in local coordinates on a function $f$ with Fourier transform $\hat{f}$ by 
$$Op(a)_Rf(x):=\int_{\R^n} \e^{ix\xi}a(x,\xi,R)\hat{f}(\xi)\mathrm{d}\xi.$$
See \cite[Theorem 18.1.6]{hormanderIII}. We remark that $Op(a)$ depends on choices of local coordinates. The discussion on \cite[Page 85]{hormanderIII} implies that $Op(a)$ is uniquely determined by $a$ modulo smoothing operators with parameter, i.e.~ modulo the space $\Psi^{-\infty}(M;\Gamma)$ of families of operators with parameter dependent Schwartz kernel in $\mathcal{S}(\Gamma; C^\infty(M\times M))$. By \cite[Theorem 18.1.13]{hormanderIII}, for $a\in S^s(M;\Gamma)$, $Op(a)$ defines a family of operators on Sobolev spaces $H^t(M)\to H^{t-s}(M)$ for any $t\in \R$.

We say that a symbol $a\in S^s(M;\Gamma)$ is \emph{homogeneous of degree $s$} if there is a $c$ such that $a(x,t\xi,tR)=t^sa(x,\xi,R)$ for $|\xi|^2+|R|^2, t\geq c$. We let $S^s_{\rm cl}(M;\Gamma)$ denote the asymptotic completion of the space of all homogeneous operators of order $s-j$, where $j$ runs over all natural numbers. A \emph{classical symbol with parameter} of order $s$ is a symbol $a\in S^s_{\rm cl}(M;\Gamma)$. In particular, $a\in S^s_{\rm cl}(M;\Gamma)$ if and only if there are uniquely determined symbols $\sigma_{s-j}(a)$ homogeneous of degree $s$ with $a\sim \sum_j \sigma_{s-j}(a)$. 

A \emph{pseudo-differential operator with parameter} of order $s$ is an operator of the form $A=Op(a)+ T$ where $a\in S^s_{\rm cl}(M;\Gamma)$ and $T\in \Psi^{-\infty}(M;\Gamma)$. As above, $a\in S^s_{\rm cl}(M;\Gamma)$ is uniquely determined by $A$ modulo $S^{-\infty}(M;\Gamma)$. We call $a$ a \emph{full symbol} of $A$. We let $\Psi^s(M;\Gamma)$ denote the space of pseudo-differential operators with parameter of order $s$. The product as operators on $C^\infty(M)$ defines a product 
$$\Psi^{s_1}(M;\Gamma)\times \Psi^{s_2}(M;\Gamma)\to \Psi^{s_1+s_2}(M;\Gamma).$$
By \cite[Theorem 18.1.8]{hormanderIII}, the \emph{product at the level of full symbols} $a_i\in S^{s_i}_{\rm cl}(M;\Gamma)$ of $A_i\in \Psi^{s_i}(M;\Gamma)$, $i=1,2$, is  determined up to $S^{-\infty}(M;\Gamma)$ by the asymptotic expansion
$$[a_1*a_2](x,\xi,R)=\sum_{\alpha\in \N^n} \frac{i^{|\alpha|}}{\alpha!}\partial_\xi^\alpha a_1(x,\xi,R)\partial_x^\alpha a_2(x,\xi,R).$$
The arguments in the matrix-valued case are analogous. If $A=(A_{j,k})_{j,k=1}^m$ and $A'=(A'_{j,k})_{j,k=1}^m$ are matrices of pseudo-differential operators with $A_{j,k}\in \Psi^{s_{j,k}}(M;\Gamma)$ and $A'_{j,k}\in \Psi^{s'_{j,k}}(M;\Gamma)$ for some $s_{j,k}, s'_{j,k}\in \R$, $j,k=1,\ldots, m$, then a full symbol $a''=(a''_{j,k})_{j,k=1}^m$ of $A'':=AA'=(\sum_{l=1}^m A_{j,l}A'_{l,k})$ is given by 
$$a''_{j,k}(x,\xi,R)=\sum_{l=1}^m\sum_{\alpha\in \N^n} \frac{i^{|\alpha|}}{\alpha!}\partial_\xi^\alpha a_{j,l}(x,\xi,R)\partial_x^\alpha a'_{l,k}(x,\xi,R),$$
where $a=(a_{j,k})_{j,k=1}^m$ and $a'=(a'_{j,k})_{j,k=1}^m$ are full symbols of $A$ and $A'$, respectively. In particular, $A''_{j,k}\in \Psi^{s_{j,k}''}(M;\Gamma)$ where $s''_{j,k}=\max\{s_{j,l}+s'_{l,k}: \, l=1,\ldots,m\}$.
 
Let $A\in \Psi^{s}(M;\Gamma)$ have a full symbol $a\in S^{s}_{\rm cl}(M;\Gamma)$ with symbol expansion $a\sim \sum_{j=0}^\infty \sigma_{s-j}(a)$. The function $\sigma_{s}(a)\in C^\infty(T^*M\times \Gamma)$ is called the \emph{principal symbol} of $A$. If there is a compact $K\subseteq T^*M\times \Gamma$ such that the principal symbol $\sigma_{s}(a)$ is invertible outside $K$, we say that $A$ is \emph{elliptic}. By a similar argument as in \cite[Theorem 18.1.24]{hormanderIII}, $A\in \Psi^{s}(M;\Gamma)$ is elliptic if and only if $A$ admits an operator $P\in \Psi^{-s}(M;\Gamma)$ such that $AP-1,PA-1\in \Psi^{-\infty}(M;\Gamma)$. The operator $P$ is called a \emph{parametrix} of $A$. 

For matrix-valued operators ellipticity is used in an analogous way. Fix $s$. Let $A=(A_{j,k})_{j,k=1}^m$ be a matrix of pseudo-differential operators with $A_{j,k}\in \Psi^{s+j-k}(M;\Gamma)$ whose full symbol $a_{j,k}$ has an asymptotic expansion $a_{j,k}\sim \sum_{l=0}^\infty \sigma_{s+j-k-l}(a_{j,k})$ where $\sigma_{s+j-k-l}(a_{j,k})$ is homogeneous of order $s+j-k-l$. We say that $A$ is elliptic if the homogeneous principal symbol $(\sigma_{s+j-k}(a_{j,k}))_{j,k=1}^m$ takes invertible values on $R^2+|\xi|^2=1$. In the matrix-valued case, $A$ is elliptic if and only if there is a matrix $P=(P_{j,k})_{j,k=1}^m$ of pseudo-differential operators with $P_{j,k}\in \Psi^{-s+j-k}(M;\Gamma)$ and $AP-1$ and $PA-1$ being matrices of pseudo-differential operators with entries in $\Psi^{-\infty}(M;\Gamma)$.

\subsection{Proof of Theorem \ref{theboundaryops}}
\label{theorem9}

To prove Theorem \ref{theboundaryops}, we need to show that $A_{jk}\in \Psi^{j-k}(\partial X;\Gamma)$ and compute $\sigma_{j-k}(a_{jk})(x',\xi',R)$ and $\sigma_{j-k-1}(a_{jk})(x',\xi',R)$ for $(x',\xi',R)\in T^*\partial X\times \Gamma$. We pick local coordinates at a point on $\partial X$. We can assume that this point is $0\in \R^n$ and that the coordinates are of the form $(x',S(x'))$, where $x'$ belongs to some neighborhood of $0\in \R^{n-1}$ and $S$ is a scalar function with $S(0)=0$ and $\grad S(0)=0$. In these coordinates, it is a well known fact that $(n-1)H(0)=\Delta S(0)$.  

We make use of the auxiliary distributions 
\[
b_{r,N}(u,z):=\int_{\R} \frac{w^r\mathrm{e}^{izw}}{(u+w^2)^N}\mathrm{d}w.
\]
Here $r\in \N$, $N\in \Z$, $u>0$ and $z\in \R$.

\begin{prop}
\label{brncomp}
In a distributional sense, we can write $b_{r,N}$ in the following way:
\[
b_{r,N}(u,z)=
\begin{cases} 
(-i\partial_z)^r(u-\partial_z^2)^{-N}\delta_{z=0}, \;& N\leq 0,\\
{}\\
(-i\partial_z)^r\sum_{k=0}^{N-1}\tilde{c}_{k,r,N} \frac{|z|^k\mathrm{e}^{-|z|\sqrt{u}}}{u^{N-(k+1)/2}}, \;& N> 0,
\end{cases}
\]
for some coefficients $\tilde{c}_{k,r,N}$. 
\end{prop}

\begin{proof}
Basic manipulations show that  $b_{r,N}=(-i\partial_z)^rb_{0,N}$ so we may assume that $r=0$. If $N\leq 0$, we can in a distributional sense compute 
\begin{align*}
b_{0,N}(u,z)=\int_{\R} \frac{\mathrm{e}^{izw}}{(u+w^2)^N}\mathrm{d}w=\int_{\R} (u+w^2)^{-N}\mathrm{e}^{izw}\mathrm{d}w=(u-\partial_z^2)^{-N}\int_{\R}\mathrm{e}^{izw}\mathrm{d}w=(u-\partial_z^2)^{-N}\delta_{z=0},
\end{align*}
and the Proposition holds for $N\leq 0$.

If $N>0$, we can compute that 
$$b_{0,N}(u,z)=\int_{\R} \frac{\mathrm{e}^{izw}}{(u+w^2)^N}\mathrm{d}w=\frac{(-1)^{N-1}}{(N-1)!}\frac{\partial^{N-1}}{\partial u^{N-1}}\int_{\R} \frac{\mathrm{e}^{izw}}{u+w^2}\mathrm{d}w=\frac{(-1)^{N-1}}{(N-1)!}\frac{\partial^{N-1}}{\partial u^{N-1}}b_{0,1}(u,z).$$
Therefore, it remains to prove that $b_{0,1}(u,z)=\tilde{c}\frac{\mathrm{e}^{-|z|\sqrt{u}}}{\sqrt{u}}$ for some constant $\tilde{c}$. A change of coordinate shows that if $u\neq 0$ then 
$$b_{0,1}(u,z)=\frac{1}{\sqrt{u}}b_{0,1}(1,z\sqrt{u}).$$
We are left to show that $b_{0,1}(1,z)= \tilde{c}\mathrm{e}^{-|z|}$ for some constant $\tilde{c}$. By definition, $b_{0,1}(1,z)$ is the Fourier transform of the function $w\mapsto (1+w^2)^{-1}$ and we conclude $b_{0,1}(1,z)= \tilde{c}\mathrm{e}^{-|z|}$ (for some constant $\tilde{c}$) from a standard computation of Fourier transforms. We conclude the case $N>0$.

\end{proof}

By partially Fourier transforming in the normal direction and taking an exterior limit, we see that in this choice of coordinates a full symbol of $A_{jk}$  is given by 
\begin{align*}
a_{jk}(x',y',\xi',R)=&b_{0,m-p-q}(R^2+|\xi'|^2,S(x')-S(y')), \\ 
&\quad\quad\mbox{when  $j=2p, k=n-2q$}\\
a_{jk}(x',y',\xi',R)=&b_{1,m-p-q}(R^2+|\xi'|^2,S(x')-S(y'))+\\
&(\xi'\cdot \grad S(x'))b_{0,m-p-q}(R^2+|\xi'|^2,S(x')-S(y')), \\ 
&\quad\quad\mbox{when  $j=2p+1, k=n-2q$}\\
a_{jk}(x',y',\xi',R)=&b_{1,m-p-q}(R^2+|\xi'|^2,S(x')-S(y'))+\\
&(\xi'\cdot \grad S(y'))b_{0,m-p-q}(R^2+|\xi'|^2,S(x')-S(y')),\\ 
&\quad\quad\mbox{when  $j=2p, k=n-2q-1$}
\end{align*}
\begin{align*}
a_{jk}(x',y',\xi',R)=&b_{2,m-p-q}(R^2+|\xi'|^2,S(x')-S(y'))+\\
&((\xi'\cdot \grad S(y'))+(\xi'\cdot \grad S(x')))b_{1,m-p-q}(R^2+|\xi'|^2,S(x')-S(y'))+\\
&(\xi'\cdot \grad S(x'))(\xi'\cdot \grad S(y'))b_{0,m-p-q}(R^2+|\xi'|^2,S(x')-S(y')), \\ 
&\quad\quad\mbox{when  $j=2p+1, k=n-2q-1$}.
\end{align*} 
It follows from Proposition \ref{brncomp} that $a_{j,k}\in S^{j-k}_{\rm cl}(\partial X;\Gamma)$ and that $A_{jk}\in \Psi^{j-k}(\partial X;\Gamma)$. The principal symbol is obtained by setting $x'=y'$. Hence, there are coefficients $c_{j,k}$ (the coefficients of the leading order contributions in $u$ from Proposition \ref{brncomp}) independent of $\partial X$ such that at $x'=0$
\begin{align*}
\sigma_{j-k}(a_{jk})(0,\xi',R)=&b_{0,m-p-q}(R^2+|\xi'|^2,0)=c_{j,k}(R^2+|\xi'|^2)^{(j-k)/2}+\mbox{l.o.t.}, \\ 
&\quad\mbox{when  $j=2p, k=n-2q$}\\
\sigma_{j-k}(a_{jk})(0,\xi',R)=&b_{1,m-p-q}(R^2+|\xi'|^2,0))=c_{j,k}(R^2+|\xi'|^2)^{(j-k)/2}+\mbox{l.o.t.}, \\ 
&\quad\quad\mbox{when  $j=2p+1, k=n-2q$}\\
\sigma_{j-k}(a_{jk})(0,\xi',R)=&b_{1,m-p-q}(R^2+|\xi'|^2,0)=c_{j,k}(R^2+|\xi'|^2)^{(j-k)/2}+\mbox{l.o.t.},\\ 
&\quad\quad\mbox{when  $j=2p, k=n-2q-1$}\\
\sigma_{j-k}(a_{jk})(0,\xi',R)=&b_{2,m-p-q}(R^2+|\xi'|^2,0)=c_{j,k}(R^2+|\xi'|^2)^{(j-k)/2}+\mbox{l.o.t.}, \\ 
&\quad\quad\mbox{when  $j=2p+1, k=n-2q-1$}.
\end{align*}
Here we are using that $\grad S(0)=0$. This proves Theorem \ref{theboundaryops}.a.

The part of the symbol of order $j-k-1$, denoted by $\sigma_{j-k-1}(a_{jk})(x',\xi',R)$, is at $x'=0$ given by the term of order $j-k-1$ in the expression $a_{jk}(0,0,\xi',R)-i\sum_{l=1}^{n-1}\frac{\partial^2 a_{jk}}{\partial \xi_l\partial y_l}(0,0,\xi',R)$. See more in for instance \cite[Theorem 7.13]{GrubbDistOp}. Using the fact that $\grad S(x')$ vanishes at $x'=0$, a computation as above shows that there are constants $\tilde{C}_{j,k-1}$ independent of the geometry such that
$$\sigma_{j-k-1}(a_{jk})(0,\xi',R)=\tilde{C}_{j,k-1}\sum_{l=1}^{n-1} \frac{\partial^2 S}{\partial x_l^2}(0)(R^2+|\xi'|^2)^{(j-k-1)/2}=C_{j,k-1}H(0)(R^2+|\xi'|^2)^{(j-k-1)/2},$$
where $C_{j,k-1}=(n-1)\tilde{C}_{j,k,-1}$. This proves Theorem \ref{theboundaryops}.b.

General lower order terms $\sigma_{j-k-l}(a_{jk})(x',\xi',R)$ of the symbol are given by the term of order $j-k-l$ in the expression
\begin{equation}\label{lowerorder}\sum_{\alpha \in \N_0^{n-1}} \frac{(-i)^{|\alpha|}}{\alpha!} \partial_\xi^\alpha \partial_x^\alpha a_{jk}(0,0,\xi',R)\ .\end{equation}

\subsection{Proof of Lemma \ref{condexplem}}
\label{lemma16}
We may reduce to the case where $A$ is of the form $A=Op(a)$ for a compactly based symbol $a\in S^m_{\rm cl}(U;\Gamma)$ for an open subset $U\subseteq \R^n$. Recall that compactly based means that $\chi a=a$ for a $\chi\in C^\infty_c(U)$. In this case we can consider $A$ as an operator $C^\infty(\R^n)\to \mathcal{E}'(\R^n)$, and we then have that 
$$\langle A1,1\rangle_{L^2(\R^n)}=\int_U\int_{\R^n}a(x,\xi,R)\delta_{\xi=0}\mathrm{e}^{ix\cdot\xi}\mathrm{d}\xi\mathrm{d}x=\int_Ua(x,0,R)\mathrm{d}x.$$
This proves Lemma \ref{condexplem}. We observe that $\langle A1,1\rangle_{L^2(\R^n)}=\int_Ua(x,0,R)\mathrm{d}x$ only holds for operators of the form $A=Op(a)$; the reduction to this case and the comparison of the global expression on $M$ to that on $\R^n$  gives rise to terms of order $\mathcal{O}(R^{-\infty})$.
\end{appendix}

\section*{Acknowledgments}

\noindent We thank Anthony Carbery and Gerd Grubb for fruitful discussions. We are grateful to Peter Gilkey, Tom Leinster, Mark Meckes and Simon Willerton for several interesting comments during the writing of this paper, and to Simon Willerton for sharing his results in \cite{will}. H.~G.~acknowledges partial support by ERC Advanced Grant HARG 268105. M.~G.~was supported by the Swedish Research Council Grant 2015-00137 and Marie Sklodowska Curie Actions, Cofund, Project INCA 600398. We  acknowledge additional travel support by the International Centre for Mathematical Sciences (Edinburgh). \\

\end{document}